\newtheorem{thm}{Theorem}[section]
 \newtheorem{lem}[thm]{Lemma}
 \newtheorem{prop}[thm]{Proposition}
 \newtheorem{cor}[thm]{Corollary}
 \newtheorem{rem}[thm]{Remark}
\def \N {\mathbb N}
 \def \Z {\mathbb Z}
 \def \R {\mathbb R}
 \def \E {\mathbb E}
 \def \Q {\mathbb Q}
\numberwithin{equation}{section}
\begin{document}

\title{sub-additive ergodic theorems for countable amenable groups}

\author{Anthony H. Dooley, Valentyn Ya. Golodets
  and Guohua Zhang}

\address{\vskip 2pt \hskip -12pt Anthony H. Dooley}

\address{\hskip -12pt Department of Mathematical Sciences, University of Bath, Bath,
 BA2 7AY, United Kingdom}

\email{a.h.dooley@bath.ac.uk}

\address{\vskip 2pt \hskip -12pt Valentin Ya. Golodets}

\address{\hskip -12pt School of Mathematics and Statistics, University of New South Wales, Sydney, NSW 2052, Australia}

\email{v.golodets@unsw.edu.au}

\address{\vskip 2pt \hskip -12pt Guohua Zhang}

\address{\hskip -12pt School of Mathematical Sciences and LMNS, Fudan University, Shanghai 200433, China}

\email{chiaths.zhang@gmail.com}

\begin{abstract}
 In this paper we  generalize Kingman's sub-additive ergodic theorem to a large class of infinite countable discrete amenable group actions.
 \end{abstract}

\maketitle

\markboth{A. H. Dooley, V. Ya. Golodets and G. H. Zhang}{sub-additive ergodic theorems for countable discrete amenable groups}

%\tableofcontents

\section{Introduction}

The study of ergodic theorems was started in 1931 by von Neumann and Birkhoff, growing from problems in statistical mechanics. Ergodic theory soon earned its own place as an important part of functional analysis and probability, and grew into the study of measure-preserving transformations of a measure space.
 In 1968 an important new impetus to this area was received from Kingman's proof of the sub-additive ergodic theorem. This theorem opened up an impressive array of new applications \cite{Kingman68, Kingman73, Kingman76}. Krengel (\cite{Krengel}) showed that Kingman's theorem can be used to derive the multiplicative ergodic theorem of Oseledec \cite{Oseledec68}, which is of considerable current interest in the study of differentiable dynamical systems. Today, there are many elegant proofs of the theorem \cite{Derriennic75, KWei82, Kingman73, Neveu83, Smel77, Steele}. Among these, perhaps the shortest proof is that of Steele \cite{Steele}: this relies neither on a maximal inequality nor on a combinatorial Riesz lemma.
 A lovely exposition of the whole theory is given in Krengel's book \cite{Krengel}.

A statement of Kingman's sub-additive ergodic theorem is as follows:

\begin{thm} \label{1105231617}
 Let $\vartheta$ be a measure preserving transformation over the Lebesgue space $(Y, \mathcal{D}, \nu)$ and $\{f_n: n\in \N\}\subseteq L^1 (Y, \mathcal{D}, \nu)$ satisfy $f_{n+ m} (y)\le f_n (y)+ f_m (\vartheta^n y)$ for $\nu$-a.e. $y\in Y$ and all $n, m\in \N$. Then
 \begin{equation*}
 \lim_{n\rightarrow \infty} \frac{1}{n} f_n (y)= f (y)\ge - \infty
 \end{equation*}
  for $\nu$-a.e. $y\in Y$, where $f$ is an invariant measurable function over $(Y, \mathcal{D}, \nu)$.
 \end{thm}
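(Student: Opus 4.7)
My plan is to give the standard proof, essentially the one in Krengel's book, obtained by combining a subadditive-integral argument with a stopping-time covering argument in the spirit of Katznelson--Weiss. Define $\overline f(y) := \limsup_{n\to\infty} \tfrac{1}{n} f_n(y)$ and $\underline f(y) := \liminf_{n\to\infty} \tfrac{1}{n} f_n(y)$; the goal is to show $\overline f = \underline f$ $\nu$-a.e.\ and that this common value is invariant and integrable (or $-\infty$).

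\textbf{Step 1: The integrals.} First I would iterate subadditivity to get $f_n(y) \le \sum_{k=0}^{n-1} f_1(\vartheta^k y)$, which shows $a_n := \int f_n\, d\nu \in [-\infty,\infty)$ is well defined. Invariance of $\nu$ under $\vartheta$ together with the hypothesis gives $a_{n+m}\le a_n+a_m$, so Fekete's lemma yields $\gamma := \lim_n \tfrac{a_n}{n} = \inf_n \tfrac{a_n}{n} \in [-\infty,\infty)$.

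\textbf{Step 2: Invariance of $\overline f$ and $\underline f$.} From $f_{n+1}(y)\le f_1(y)+f_n(\vartheta y)$ and $\tfrac{1}{n}f_1(y)\to 0$ a.e.\ (since $f_1\in L^1$), one gets $\overline f(\vartheta y)\ge\overline f(y)$ and $\underline f(\vartheta y)\ge\underline f(y)$ pointwise a.e. Since $\nu$ is $\vartheta$-invariant, these inequalities must be equalities a.e., so both are invariant. Birkhoff's theorem applied to $f_1$, together with $f_n\le\sum_{k=0}^{n-1} f_1\circ\vartheta^k$, gives $\overline f \le \mathbb{E}(f_1\mid\mathcal{I}) \in L^1$, so at least the positive parts cause no integrability trouble.

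\textbf{Step 3 (main work): $\overline f \le \underline f$ a.e.} I would first reduce to the case $\underline f > -\infty$ a.e. (the case where $\{\underline f = -\infty\}$ has positive measure can be reduced by working on its complement, which is invariant). Fix $\varepsilon>0$ and a large $M$, and truncate: put $f_n^{(M)} := f_n\vee(-Mn)$, which is still subadditive. Define the stopping time $\tau(y)$ as the least $n\ge 1$ with $\tfrac{1}{n}f_n(y)\le \underline f(y)+\varepsilon$, and choose $L$ so large that $\nu\{\tau>L\}$ is negligible. For $N\gg L$ and $\nu$-a.e.\ $y$, I would partition most of $\{0,1,\dots,N-1\}$ greedily into disjoint intervals $[k,k+\tau(\vartheta^k y))$ on which $f_{\tau(\vartheta^k y)}(\vartheta^k y)\le \tau(\vartheta^k y)(\underline f(y)+\varepsilon)$ (using invariance of $\underline f$), treating points with $\tau(\vartheta^k y)>L$ as bad singletons covered by the trivial bound $f_1$. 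Applying subadditivity across the partition, integrating, dividing by $N$, and letting $N\to\infty$, $L\to\infty$, $\varepsilon\to 0$, $M\to\infty$ in that order yields $\gamma\le \int \underline f\, d\nu$. Combined with the obvious $\int\overline f\, d\nu\le\gamma$ (from Fatou applied to $a_n/n$, using the uniform upper bound $f_n/n\le \tfrac{1}{n}\sum_{k=0}^{n-1} f_1\circ\vartheta^k$), we get $\int(\overline f-\underline f)d\nu\le 0$ while $\overline f\ge\underline f$ pointwise, forcing equality a.e.

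\textbf{Main obstacle.} The core difficulty is Step 3, specifically the covering argument and the bookkeeping of the bad set $\{\tau>L\}$: one needs the contribution of uncovered indices to vanish after dividing by $N$ and taking $M,L$ large. The truncation $f_n\vee(-Mn)$ is what makes the $\gamma=-\infty$ and $\gamma>-\infty$ cases uniform, and it is the step that will need the most care when the argument is adapted to amenable groups later in the paper, where F\o lner sets replace the intervals $[0,N)$ and the greedy covering has to be replaced by a Ornstein--Weiss quasi-tiling.
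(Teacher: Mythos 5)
First, a point of reference: the paper contains no proof of Theorem \ref{1105231617} --- it is quoted in the introduction as Kingman's classical theorem, with the reader sent to \cite{Krengel, Steele, KWei82} and the other cited proofs. So your proposal can only be judged against those standard arguments and against the generalization the authors actually carry out later. Your Steps 1 and 2 are fine, and the covering half of Step 3 (stopping time relative to $\underline f$, truncation by $-Mn$, greedy decomposition of $[0,N)$ with a small bad set, then $N\to\infty$, $L\to\infty$, $\varepsilon\to 0$, $M\to\infty$) is a correct outline of the Katznelson--Weiss/Krengel argument for the inequality $\gamma\le\int\underline f\,d\nu$.

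The genuine gap is the sentence claiming ``the obvious $\int\overline f\,d\nu\le\gamma$ (from Fatou applied to $a_n/n$, using the uniform upper bound)''. Fatou applied to the nonnegative functions $\frac1n\sum_{k=0}^{n-1}f_1\circ\vartheta^k-\frac1n f_n$ gives
$\int (\E(f_1|\mathcal I)-\overline f)\,d\nu\le\liminf_n(\int f_1\,d\nu-\frac{a_n}{n})=\int f_1\,d\nu-\gamma$,
i.e.\ exactly the \emph{opposite} inequality $\int\overline f\,d\nu\ge\gamma$; the same is true of every variant of reverse Fatou with this dominating sequence. The bound $\int\overline f\,d\nu\le\gamma$ is not soft: it is the second substantive half of the proof. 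The standard way to get it is the $m$-block decomposition: for fixed $m$, subadditivity (after averaging over the $m$ possible offsets) gives $f_n\le\frac1m\sum_{k=0}^{n-1}f_m\circ\vartheta^k$ plus boundary terms that are $o(n)$ a.e., so Birkhoff's theorem yields $\overline f\le\frac1m\E(f_m|\mathcal I)$ a.e.; integrating and taking the infimum over $m$ gives $\int\overline f\,d\nu\le\inf_m\frac{a_m}{m}=\gamma$, which closes your loop. (Alternatively, Steele's proof runs the covering argument pointwise, bounding $\limsup_N\frac1N f_N(y)$ by $\underline f(y)+\varepsilon$ plus an error controlled almost everywhere by Birkhoff, and never passes through the integrals.) Note that the missing $m$-block step is precisely the mechanism this paper generalizes: the inequalities $\limsup_n\frac{1}{|F_n|}d_{F_n}\le\frac{1}{|T|}\E(d_T|\mathcal I)$ in Theorems \ref{1103061916} and \ref{1102001916} are its amenable-group analogues, so the step you have replaced by an incorrect appeal to Fatou is exactly the one whose generalization carries the weight of Sections \ref{super} and \ref{techn}.
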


If all the $f_n$ are constant functions, equal to $a_n$ (say), the theorem reduces to a well-known basic fact in analysis:
 if the sequence $\{a_n: n\in \N\}\subseteq \R$ satisfies $a_{n+ m}\le a_n+ a_m$ for all $n, m\in \N$, then
 \begin{equation*}
 \lim_{n\rightarrow \infty} \frac{a_n}{n}= \inf_{n\in \N} \frac{a_n}{n}\ge - \infty.
 \end{equation*}

It is an easy extension of Kingman's theorem that if $\inf_n \frac{\int f_n d\nu}{n} > - \infty$ , then the convergence also holds in $L^1$.

In this paper, we shall discuss extensions of Kingman's theorem to the class of countable discrete amenable groups.

The class of amenable groups
 includes all finite groups, solvable groups and compact groups, and actions of these groups on a Lebesgue space are a natural
 extension of the $\Z$-actions considered in Kingman's theorem: the foundations of the theory of amenable group actions were laid by Ornstein and Weiss in their pioneering paper \cite{OW}.

Lindenstrauss \cite{L} established the pointwise ergodic theorem for general locally
 compact amenable group actions along F\o lner sequences (with some natural conditions),
 which generalizes the Birkhoff theorem from $\Z$-actions to general amenable group actions, see also Benji Weiss' lovely survey article \cite{We}. For other related work, see
 \cite{AABBDGHJLMOSS10, BowenNevo, Cal53, E, E(1974), GE74, Nevo06, OW(1992), Shulman88, Tem72, Tem92}.

In contrast to the amenable group $\Z$,  a general infinite countable discrete amenable group
 may have a complicated combinatorial structure, and our challenge is to consider the limiting
 behaviour of the Kingman type in this context.

In general, we define a subset $\mathbf{D} = \{d_F: F \in
 \mathcal{F}_G\}$ of functions in $L^1(Y, \mathcal{D}, \nu)$, indexed
 by the family $\mathcal{F}_G$ of all non-empty finite subsets of $G$,
 to be \emph{$G$-invariant} and \emph{sub-additive} if $d_{E g} (y)=
 d_E (g y)$ and $d_{E\cup F} (y)\le d_E (y)+ d_F
  (y)$ for
  $\nu$-a.e. $y\in Y$, any $g\in G$ and all disjoint $E, F\in \mathcal{F}_G$.
  Then a natural generalization of Kingman's theorem is to ask whether, for an invariant
  sub-additive family, the limit
 \begin{equation}
 \lim_{n\rightarrow \infty} \frac{1}{|F_n|} d_{F_n} (y),
 \end{equation}
 exists for a F\o lner sequence $\{F_n: n\in \N\}$ of $G$, either
 pointwise almost everywhere, or, if $\liminf\limits_{n\rightarrow
 \infty} \frac{1}{|F_n|} \int d_{F_n} (y) d \nu (y) > - \infty$, in
 $L^1$.

This theorem reduces to Kingman's theorem for the F\o lner sequence
 $F_n = \{0, 1, \cdots , n- 1\}$, although it is not \emph{a priori}
 clear whether it holds for arbitrary good F\o lner sequences even in
 the integers. Our overall aim, for amenable groups, is to find
 conditions on the F\o lner sequence, and the group, under which this
 theorem holds.

 The first step, motivated by \cite{DZRDS} (in particular  \cite[Proposition 9.1 and Proposition 10.4]{DZRDS} and their proof), is to replace the limit by the limit superior.  Under some natural assumptions, we can prove versions of the Kingman theorem: precisely, if the family is either $G$-bi-invariant or strongly sub-additive, we show the existence of the $\limsup$. Moreover, if the group has the property of self-similarity (see Section \ref{techn}), then Kingman's theorem can be generalized completely to an action over a Lebesgue space. We shall see in the last section of the paper that this class of infinite countable discrete amenable groups includes many interesting groups.
   Observe that the assumption of self-similarity depends only on the algebraic structure of the group, whereas the conditions of bi-invariance and strong sub-additivity are properties of the particular family of functions chosen. As shown by \cite{DZRDS, MO}, strong sub-additivity plays an important role in the study of actions of an amenable group on Lebesgue space. In particular in his treatment of measure-theoretic entropy theory for the actions of an amenable group on a Lebesgue space,
  Moulin Ollagnier used the property of strong sub-additivity rather heavily (cf \cite[Chapter 4]{MO}).

  After we had submitted this paper in August, 2011 our attention was drawn to recent work of \cite{Pogor} by Pogorzelski, where a Banach space valued pointwise convergence for additive processes (\cite[Theorem 7.11]{Pogor}) is proved, together with an abstract mean ergodic theorem for set functions, based on a \emph{F\o lner vanishing invariant boundary term $b$}.
  
  In contrast to our assumption of subadditivity of $d$, the assumption of \cite{Pogor} is that the family of functions which he denotes by $F$, are $b$-\emph{almost additive} in the sense that $|F(Q) - \Sigma_{k=1}^m F(Q_k) | \leq \Sigma_{k=1}^m b(Q_k) $ for a disjoint union $Q= \cap_{j=1}^m Q_j$. (We here cite the special case where the Banach space is $\mathbb R$). The boundary term satisfies $\frac{b(U_j)}{|U_j|} \to 0$ as $j \to \infty$ for any F\o lner sequence $\{U_j\}$, is invariant under $G$, and compatible with unions and intersections. In a certain sense, $F$ becomes linear at infinity. 
  
  A mean ergodic theorem \cite[Theorem 5.7]{Pogor} is proved under compactness criteria for orbits induced by the actions on the Banach space.  Pogorzelski's conditions of additivity and $b$-almost additivity imply certain conditions on the group $G$ and on the invariant family $\mathcal F$.

 Our paper is organized as follows. Sections \ref{preli} and \ref{prepa} contain some preliminary remarks on infinite countable discrete amenable groups and pointwise ergodic theorems for their actions on a Lebesgue space. Motivated by the results of \cite{DZRDS}, in Section \ref{super} we analyze the  limit superior behavior of an invariant sub-additive family of integrable functions on an infinite countable discrete amenable group action, under some natural assumptions. In Section \ref{techn}, we present our main results for some special infinite countable discrete amenable groups. Finally in Section \ref{appli} we give some direct applications of the results obtained in previous sections.

We believe that the theorems we establish will lead to further developments along the lines of Oseledec' theorem and other applications.

\section{Preliminaries} \label{preli}
  Throughout, we assume that $G$ is a countably infinite discrete amenable group.
 We begin by recalling some basic properties of $G$.  These and many further details can be found in \cite{OW}.

Denote by $\mathcal{F}_G$ the
 set of all non-empty finite subsets of $G$.
 $G$ is called \emph{amenable}, if
 for each $K\in \mathcal{F}_G$ and any $\delta> 0$ there exists $F\in \mathcal{F}_G$
 such that
 $$|F\Delta K F|< \delta |F|,$$
 where $|\bullet|$ is counting measure on the set $\bullet$, $K F= \{k f: k\in K, f\in
 F\}$ and $F\Delta K F= (F\setminus K F)\cup (K F\setminus F)$.
 Let $K\in \mathcal{F}_G$ and $\delta> 0$. Set $K^{- 1}= \{k^{- 1}: k\in K\}$.
 $A\in \mathcal{F}_G$ is called \emph{$(K, \delta)$-invariant}, if
 $$|K^{- 1} A\cap K^{- 1} (G\setminus
 A)|< \delta |A|.$$
 A sequence $\{F_n: n\in \mathbb{N}\}$ in $\mathcal{F}_G$ is called
 a
 \emph{F\o lner sequence}, if for any $K\in \mathcal{F}_G$ and for any $\delta> 0$,
 $F_n$ is $(K, \delta)$-invariant whenever $n\in \mathbb{N}$ is sufficiently large,
 i.e.
 \begin{equation}
 \lim_{n\rightarrow \infty} \frac{|g F_n\Delta F_n|}{|F_n|}= 0
 \end{equation}
  for each $g\in G$.
 It is
 not hard to deduce the usual asymptotic invariance property:
 $G$ is amenable if and only if  $G$ has a F\o lner sequence
 $\{F_n: n\in \mathbb{N}\}$.

For example, for $G=\Z$, a
 F\o lner sequence is defined by $F_n=\{ 0,1,\cdots,n-1\}$, or, indeed, $\{ a_n,a_n+1,\cdots,a_n+n- 1 \}$ for any sequence $\{a_n: n\in
 \mathbb{N}\}\subseteq \Z$.

A \emph{measurable dynamical $G$-system} (MDS) $(Y, \mathcal{D}, \nu, G)$ is a Lebesgue space $(Y, \mathcal{D}, \nu)$  and a group $G$ of invertible measure preserving transformations of $(Y, \mathcal{D}, \nu)$ with $e_G$ acting as the identity transformation, where $e_G$ is the unit of the group $G$.

From now on $(Y, \mathcal{D}, \nu, G)$ will denote an MDS.

Let $\mathcal{I}$ be the sub-$\sigma$-algebra $\{D\in \mathcal{D}: \nu (g D\Delta D)= 0\ \text{for each}\ g\in G\}$, and for each $f\in L^1 (Y, \mathcal{D}, \nu)$ denote by $\E (f| \mathcal{I})$ the conditional expectation of $f$ over $\mathcal{I}$ with respect to $\nu$.
 If in addition, the MDS $(Y, \mathcal{D}, \nu, G)$ is \emph{ergodic}, (i.e. $\nu (D)$ is equal to either zero or one for all $D\in \mathcal{I}$), then
 $$\E (f| \mathcal{I}) (y)= \int_Y f (y') d \nu (y')$$
  for $\nu$-a.e. $y\in Y$.
 The measurable function $\E (f| \mathcal{I})$ is $G$-invariant and $\E (f| \mathcal{I})\in L^p (Y, \mathcal{D}, \nu)$ if $f\in L^p (Y, \mathcal{D}, \nu)$ for each $1\le p\le \infty$.

A sequence $\{E_n: n\in \mathbb{N}\}\subseteq \mathcal{F}_G$ is said
 to be \emph{tempered} if there exists $M> 0$ such that $|\bigcup\limits_{k= 1}^n E_k^{- 1} E_{n+ 1}|\le M |E_{n+ 1}|$ for each $n\in \mathbb{N}$. It is easy to show (see \cite{L}) that every F\o lner sequence of the group $G$  contains a tempered sub-sequence.

A related property which we will need in Lemma \ref{1103052244}  is the Tempelman condition:
 a sequence $\{E_n: n\in \mathbb{N}\}\subseteq \mathcal{F}_G$ is said
 to satisfy the \emph{Tempelman condition} if there exists $M> 0$ such that $|\bigcup\limits_{k= 1}^n E_k^{- 1} E_n|\le M |E_n|$ for each $n\in \mathbb{N}$. It is easy to see that every sequence satisfying the Tempelman condition is a tempered sequence, but the converse is not true: indeed the lamplighter group (see \cite{L}) is a countable discrete amenable group where no F\o lner sequence contains a sub-sequence satisfying the Tempelman condition.

Lindenstrauss (\cite[Theorem 1.2]{L} and \cite{Lannounce, We}) showed how to use tempered sequences to generalize the Birkhoff pointwise convergence theorem to an amenable group action as follows:

\begin{thm} \label{1006131615}
 Let $f\in L^p (Y, \mathcal{D}, \nu)$ and $\{F_n: n\in \mathbb{N}\}$ be a tempered F\o lner sequence of $G$, where $1\le p< \infty$. Then
 \begin{equation*}
 \lim_{n\rightarrow \infty} \frac{1}{|F_n|} \sum_{g\in F_n} f (g y)= \E (f| \mathcal{I}) (y)
 \end{equation*}
 for $\nu$-a.e. $y\in Y$ and in the sense of $L^p$.
 \end{thm}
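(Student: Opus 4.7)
The plan is to follow the standard framework for pointwise ergodic theorems, which splits into three ingredients: a maximal inequality, pointwise convergence on a dense subclass, and a density/Banach-principle closing argument. Introduce the averaging operators $A_n f(y) = \frac{1}{|F_n|}\sum_{g \in F_n} f(gy)$ and the maximal function $f^{\ast}(y) = \sup_n A_n |f|(y)$. The first and hardest goal is a weak-type $(1,1)$ inequality: there exists $C > 0$ (depending only on the tempering constant $M$) such that for every $f \in L^1(Y, \mathcal{D}, \nu)$ and every $\lambda > 0$,
\[
\nu\bigl(\{y \in Y : f^{\ast}(y) > \lambda\}\bigr) \leq \frac{C}{\lambda}\|f\|_1.
\]

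This is the heart of the theorem, and by far the main obstacle. A Cald\'eron-type transference argument reduces it to a combinatorial covering problem on $G$: given translates $\{g_i F_{n_i}\}$ that cover a finite set $E \subseteq G$, one must extract a subcollection which is $\varepsilon$-disjoint and still covers a definite positive fraction of $E$. In the Tempelman regime $|\bigcup_{k \leq n} F_k^{-1} F_n| \leq M|F_n|$ this can be done deterministically by a greedy Vitali selection, but the tempered condition $|\bigcup_{k < n} F_k^{-1} F_{n+1}| \leq M|F_{n+1}|$ is too weak to support that. Following Lindenstrauss, I would use a probabilistic selection: examine the candidate translates in reverse order of index $n_i$, keep each one with an independent Bernoulli trial of carefully chosen probability, and estimate via first moments that with positive probability the retained subcollection is sufficiently disjoint yet still covers a large portion of $E$. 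Pushing this back through transference yields the weak maximal inequality.

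Next I would identify a dense subspace of $L^p(Y, \mathcal{D}, \nu)$ on which the conclusion is transparent. By the standard von Neumann decomposition (applied in $L^2$ and then extended by density), every $f \in L^p$ is approximable in norm by sums of the form $h + \sum_{i=1}^{k}(u_i - u_i \circ g_i^{-1})$, where $h$ is $G$-invariant and each $u_i \in L^\infty(Y, \mathcal{D}, \nu)$. For invariant $h$ we have $A_n h \equiv h = \E(h|\mathcal{I})$ identically; for a coboundary $u - u \circ g^{-1}$, telescoping after translation yields $|A_n(u - u \circ g^{-1})| \leq \frac{2\|u\|_\infty \, |gF_n \triangle F_n|}{|F_n|}$, which tends to $0$ by the F\o lner property. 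So on this dense class the pointwise convergence holds immediately and the limit is $\E(\cdot|\mathcal{I})$.

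Combining the weak maximal inequality with pointwise convergence on the dense class, the Banach principle yields $\nu$-a.e.\ convergence $A_n f(y) \to \E(f|\mathcal{I})(y)$ for every $f \in L^1$. For the $L^p$-mode conclusion with $1 \leq p < \infty$: when $p > 1$, interpolating the weak $(1,1)$ estimate with the trivial bound $\|f^{\ast}\|_\infty \leq \|f\|_\infty$ gives $\|f^{\ast}\|_p \leq C_p \|f\|_p$, so the a.e.\ limit is dominated in $L^p$ and the dominated convergence theorem upgrades a.e.\ convergence to $L^p$ convergence. The case $p = 1$ is handled by a uniform-integrability argument using the mean ergodic theorem on the dense class, then passing to general $f \in L^1$ by the maximal inequality.
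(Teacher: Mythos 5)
This theorem is not proved in the paper at all: it is Lindenstrauss's pointwise ergodic theorem, quoted verbatim from \cite[Theorem 1.2]{L} (with the mean-convergence addendum attributed in Remark \ref{1103071447} to standard methods and to \cite{Lannounce, We}). So there is no ``paper's own proof'' to compare against; what you have written is an outline of the proof in the cited literature, and as an outline it is structurally accurate --- the three-step architecture (weak-type $(1,1)$ maximal inequality, pointwise convergence on the dense class of invariant functions plus $L^\infty$-coboundaries via the telescoping/F\o lner estimate, and the Banach principle) is exactly how Lindenstrauss's theorem is proved, and your observation that the Tempelman regime admits a deterministic Vitali selection while the tempered condition forces a randomized covering argument is the correct diagnosis of where the difficulty sits.

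That said, as a self-contained proof your proposal has a large hole precisely at the point you yourself flag as ``the heart of the theorem'': the probabilistic covering lemma is described in one sentence (``keep each one with an independent Bernoulli trial of carefully chosen probability, and estimate via first moments\dots'') rather than proved. Choosing the retention probabilities, controlling the expected overlap using only the tempered bound $|\bigcup_{k\le n} F_k^{-1}F_{n+1}|\le M|F_{n+1}|$, and iterating the selection over scales is the entire technical content of \cite{L}; without it the maximal inequality, and hence everything downstream, is unproved. A minor further point: for the $L^1$ mean convergence you do not need uniform integrability or the maximal inequality at all --- the operators $A_n$ are contractions on $L^1$, and norm convergence on the dense class (where it is immediate from the same telescoping estimate) extends to all of $L^1$ by a three-epsilon argument, which is what Remark \ref{1103071447} alludes to.
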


\begin{rem} \label{1103071447}

Lindenstrauss actually states the conclusion for pointwise convergence in the case of $p= 1$. As remarked in \cite{Lannounce}, mean convergence easily follows by methods familiar from $\Z$-actions.

 Moreover, as shown in \cite{JR79}, a restriction (such as temperance) on the F\o lner sequence is essential even in the special case $G= \Z$.

Note further that \cite[Theorem 2.1]{We} discusses mean convergence in the case of $p= 2$, and shows that this holds for any F\o lner sequence.
 \end{rem}

A function $f\in L^1 (Y, \mathcal{D}, \nu)$ is \emph{non-negative} if $f (y)\ge 0$ for
  $\nu$-a.e. $y\in Y$; and \emph{$G$-invariant} if $ f (g y)= f (y)$   for $\nu$-a.e. $y\in Y$ and all $g\in G$.

 Let $\mathbf{D}= \{d_F: F\in \mathcal{F}_G\}$ be a \emph{family} of functions in $L^1 (Y, \mathcal{D}, \nu)$. We make some natural assumptions on the family $\mathbf{D}$ which will enable us to state analogues of Theorem \ref{1105231617}.

We say that $\mathbf{D}$ is:
  \begin{enumerate}

   \item
  \emph{non-negative} if: each element from $\mathbf{D}$ is non-negative;

   \item \emph{$G$-invariant} if: $d_{F g} (y)= d_F (g y)$ for $\nu$-a.e. $y\in Y$ and all $F\in \mathcal{F}_G, g\in G$;

   \item \emph{$G$-bi-invariant} if: it is $G$-invariant, and in addition $d_{gF}(y) = d_{Fg}(y)$ for $\nu$-a.e. $y\in Y$ and all $F\in \mathcal{F}_G, g\in G$;

   \item \emph{monotone} if: $d_E (y)\le d_F (y)$ for $\nu$-a.e. $y\in Y$ and all $\emptyset\neq E\subseteq F\in \mathcal{F}_G$;

   \item
 \emph{sub-additive} (\emph{sup-additive}, respectively) if: $d_{E\cup F} (y)\le d_E (y)+ d_F
  (y)$ for
  $\nu$-a.e. $y\in Y$ and all disjoint $E, F\in \mathcal{F}_G$ ($- \mathbf{D}$ is sub-additive, respectively);

 \item \emph{strongly sub-additive} (\emph{strongly sup-additive}, respectively) if $d_{E\cap F} (y)+ d_{E\cup F} (y)$ $\le d_E (y)+ d_F (y)$ for $\nu$-a.e. $y\in Y$ and all $E, F\in \mathcal{F}_G$, here by convention we set $d_\emptyset (y)= 0$ for each $y\in Y$ ($- \mathbf{D}$ is strongly sub-additive, respectively).
  \end{enumerate}

 For example, for each $f\in
  L^1 (Y, \mathcal{D}, \nu)$, it is easy to check that
  $$\mathbf{D}^f\doteq \{d_F^f (y)\doteq \sum_{g\in F} f (g y): F\in \mathcal{F}_G\}$$
   is a strongly sub-additive $G$-invariant family in $L^1 (Y, \mathcal{D}, \nu)$.

   Of course, if $G$ is abelian, then every $G$-invariant family is $G$-bi-invariant; this is also clearly true if for every $F\in \mathcal{F}_G$, the function $d_{F}$ depends only on the cardinality of $F.$

In fact, let $\mathbf{D}= \{d_F: F\in \mathcal{F}_G\}\subseteq L^1 (Y, \mathcal{D}, \nu)$ be a strongly sub-additive family. If $\mathcal{D}$ is $G$-invariant (respectively $G$-bi-invariant) then the family
$\{d_F (y)- |F|^2: F\in \mathcal{F}_G\}\subseteq L^1 (Y, \mathcal{D}, \nu)$
is also $G$-invariant (respectively $G$-bi-invariant).

Let $\mathbf{D}= \{d_F: F\in \mathcal{F}_G\}\subseteq L^1 (Y, \mathcal{D}, \nu)$ be a sub-additive $G$-invariant family. We are interested in the
 convergence of
 \begin{equation} \label{1105231650}
 \lim_{n\rightarrow \infty} \frac{1}{|F_n|} d_{F_n} (y),
 \end{equation}
 where $\{F_n: n\in \N\}$ is a F\o lner sequence of $G$.

These are the basic ingredients of our version of Kingman's sub-additive ergodic theorem for infinite countable discrete amenable group actions.

\section{Preparations} \label{prepa}

In this section, we consider the asymptotic limiting behaviour of \eqref{1105231650} in the simplest case, when all functions $f_n$ are constant functions. We also present a version of the maximal inequality which will be used in later sections following the ideas of \cite{Krengel}.

In contrast to a $\Z$-action, for a general infinite countable discrete amenable group action it is not clear whether the limit \eqref{1105231650} exists, even if all the functions in $\mathbf{D}$ are constant functions. Thus, we will add some natural conditions.

Let $\emptyset\neq T\subseteq G$. We say that \emph{$T$ tiles $G$} if there exists $\emptyset\neq G_T\subseteq G$ such that $\{T c: c\in G_T\}$ forms a partition of $G$, that is, $T c_1\cap T c_2= \emptyset$ if $c_1$ and $c_2$ are different elements from $G_T$ and
  $\bigcup\limits_{c\in G_T} T c= G$.

Denote by $\mathcal{T}_G$ the set of all non-empty finite subsets of $G$ which tile $G$. Observe that $\mathcal{T}_G\neq \emptyset$, as $\mathcal{T}_G\supseteq \{\{g\}: g\in G\}$.

Let $(Y, \mathcal{D}, \nu, G)$ be an MDS. We say that \emph{$G$ acts freely on $(Y, \mathcal{D}, \nu)$} if $\{y\in Y: g y= y\}$ has zero $\nu$-measure for any $g\in G\setminus \{e_G\}$.

Tiling sets play a key role in establishing a version of Rokhlin's Lemma for infinite countable discrete amenable group actions (see \cite[Theorem 3.3 and Proposition 3.6]{We}). In particular, we have:

\begin{prop} \label{1008272250}
 Let $T\in \mathcal{F}_G$. Then
 $T\in \mathcal{T}_G$ if and only if, for every MDS $(Y, \mathcal{D}, \nu, G)$, where $G$ acts freely on $(Y, \mathcal{D}, \nu)$, for each $\epsilon> 0$ there exists $B\in \mathcal{D}$ such that the family $\{t B: t\in T\}$ is disjoint and $\nu (\bigcup\limits_{t\in T} t B)\ge 1- \epsilon$.
 \end{prop}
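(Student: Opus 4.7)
The plan is to prove the two directions separately. The forward direction is essentially the Rokhlin lemma for free amenable group actions, while the reverse direction combines a Borel--Cantelli extraction with a diagonal compactness argument inside a free ergodic MDS.

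For the forward direction, suppose $T \in \mathcal{T}_G$ and fix $G_T \subseteq G$ with $\{Tc : c \in G_T\}$ a partition of $G$. Given a free MDS $(Y, \mathcal{D}, \nu, G)$ and $\epsilon > 0$, I would first pick a F\o lner set $F \in \mathcal{F}_G$ that is $(TT^{-1}, \delta)$-invariant for sufficiently small $\delta$, and set $G_T' := \{c \in G_T : Tc \subseteq F\}$; a counting argument based on F\o lner invariance shows that $|TG_T'| \geq (1 - \epsilon/2)|F|$. The Ornstein--Weiss Rokhlin lemma (see \cite{OW,We}) applied in the free MDS then yields $A \in \mathcal{D}$ with $\{gA : g \in F\}$ pairwise disjoint and $\nu\bigl(\bigsqcup_{g \in F} gA\bigr) \geq 1 - \epsilon/2$. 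Setting $B := \bigsqcup_{c \in G_T'} cA$, the injectivity of the multiplication $T \times G_T \to G$ guaranteed by the tiling property makes $\{tB : t \in T\}$ pairwise disjoint, while $\nu\bigl(\bigsqcup_{t \in T} tB\bigr) = |TG_T'| \cdot \nu(A) \geq (1 - \epsilon/2)^2 \geq 1 - \epsilon$.

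For the reverse direction, I would fix a free ergodic MDS, say the Bernoulli shift of $G$ on $(\{0,1\}^G, \mu^{\otimes G})$. For each $k \in \N$, apply the hypothesis with $\epsilon = 2^{-k}$ to produce $B_k \in \mathcal{D}$ with $\{tB_k : t \in T\}$ pairwise disjoint and $\nu\bigl(Y \setminus \bigcup_t tB_k\bigr) \leq 2^{-k}$. Since $\sum_k 2^{-k} < \infty$, the Borel--Cantelli lemma (applied for each $g \in G$ separately to the sets $g^{-1}(Y \setminus \bigcup_t tB_k)$ and intersected over the countable group $G$) yields a conull set of $y_0$ for which, for every $g \in G$, one has $g y_0 \in \bigcup_t tB_k$ for all sufficiently large $k$. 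Fix such a generic $y_0$ and set $C_k := \{g \in G : gy_0 \in B_k\}$. The disjointness of $\{tB_k\}$ combined with freeness forces the map $T \times C_k \to G$, $(t, c) \mapsto tc$, to be injective: for $t_1 \neq t_2 \in T$, the identity $t_1 c_1 = t_2 c_2$ would place $c_1 y_0 \in B_k \cap t_1^{-1}t_2 B_k$, contradicting $t_1 B_k \cap t_2 B_k = \emptyset$. Moreover, $g \in TC_k$ is equivalent to $g y_0 \in \bigcup_t tB_k$, so by the choice of $y_0$ every $g \in G$ lies in $TC_k$ for all large $k$.

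To extract an exact tiling of $G$ from the approximate ones, I would diagonalize in the compact space $2^G$. Enumerate $G = \{g_1, g_2, \ldots\}$. For each $j$, since $g_j \in TC_k$ for all large $k$ and $T$ is finite, by pigeonhole some $t_j \in T$ satisfies $t_j^{-1}g_j \in C_k$ for infinitely many $k$. Diagonalizing, select a subsequence $(k_i)$ so that for every $j$ and every $i \geq j$ one has $t_j^{-1}g_j \in C_{k_i}$; after a further subsequence extracted from the compact space $2^G$, assume $(C_{k_i})$ converges pointwise to some $C_* \in 2^G$. Then $t_j^{-1}g_j \in C_*$ for every $j$, so $TC_* = G$; and injectivity of $T \times C_* \to G$ is preserved under pointwise limits, since it is a closed condition. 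Therefore $\{Tc : c \in C_*\}$ is a partition of $G$ and $T \in \mathcal{T}_G$. The main technical step --- and the one where care is needed --- is the diagonal extraction: arranging for a single limit $C_*$ to simultaneously inherit the injectivity from all the $C_k$ and achieve exact covering rather than mere density one.
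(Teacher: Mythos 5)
The paper does not actually prove this proposition; it quotes it from Weiss (\cite[Theorem 3.3 and Proposition 3.6]{We}), so your proposal is being measured against the known proof there. Your \emph{reverse} direction is correct and nicely self-contained: fixing a free MDS (the Bernoulli shift works), applying Borel--Cantelli over the countable group to get a generic point $y_0$, reading off approximate tilings $C_k=\{g: gy_0\in B_k\}$ of $G$, and extracting an exact tiling by a diagonal/compactness argument in $2^G$ (injectivity is a closed condition, covering is secured by the diagonal subsequence) is sound.

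The \emph{forward} direction, however, has a genuine gap: you invoke ``the Ornstein--Weiss Rokhlin lemma'' to produce, for an arbitrary $(TT^{-1},\delta)$-invariant F\o lner set $F$, a set $A$ with $\{gA: g\in F\}$ \emph{exactly} disjoint and $\nu(\bigcup_{g\in F}gA)\ge 1-\epsilon/2$. That statement is precisely the conclusion of the present proposition applied to the shape $F$, and by the converse direction (which you yourself prove) it would force $F\in\mathcal{T}_G$. Since a generic sufficiently invariant F\o lner set need not tile $G$ --- and whether \emph{any} tiling F\o lner sequence exists in a general countable amenable group is the open monotileability problem cited in Section \ref{prepa} --- this single-shape exact Rokhlin tower is not available, and the argument is circular. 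What Ornstein--Weiss actually provide for arbitrary invariant shapes is an $\epsilon$-quasi-tiling: finitely many shapes $F_1,\dots,F_k$ and towers that are only $\epsilon$-disjoint. The correct route (essentially Weiss's proof) is to take such an $\epsilon$-disjoint quasi-tiling by very $(TT^{-1},\delta)$-invariant shapes, subdivide each shape $F_i$ by the translates $Tc$ with $Tc\subseteq F_i$ (your counting step, which is fine), and then discard the small proportion of $T$-translates spoiled by the $\epsilon$-overlaps between quasi-tiles before assembling $B$ from the surviving centers. Your subdivision idea is the right one, but the exact tower you hang it on does not exist as stated, and handling the $\epsilon$-disjointness is the real content of this direction.
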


The class of countable amenable groups admitting a \emph{tiling F\o
 lner sequence} (i.e. a F\o lner sequence consisting of tiling
 subsets of the group) is large, and includes all countable amenable
 linear groups and all countable residually finite amenable groups
 \cite{W0}. Recall that a \emph{linear group} is an abstract group
 which is isomorphic to a matrix group over a field $K$ (i.e. a group
 consisting of invertible matrices over some field $K$); a group is
 \emph{residually finite} if the intersection of all its normal
 subgroups of finite index is trivial. Note that any finitely
 generated nilpotent group is residually finite. The question of
 whether every countable discrete amenable group admits a tiling F\o
 lner sequence remains open \cite{OW}.

The following results are  \cite[Proposition 2.5 and Proposition 2.3]{DZRDS}.  See also \cite[Theorem 6.1]{Lin-Wei}, \cite[Theorem 5.9]{We} and \cite{MO, WZ}.

 \begin{prop} \label{1103051237}
 Let $f: \mathcal{F}_G\rightarrow \mathbb{R}$ be a function. Assume that $f (E g)= f (E)$ and $f (E\cup F)\le f (E)+ f (F)$ whenever $g\in G$ and $E, F\in \mathcal{F}_G$ satisfy $E\cap F= \emptyset$. Then for any tiling F\o lner sequence $\{F_n: n\in \mathbb{N}\}$ of $G$, the sequence $\{\frac{f (F_n)}{|F_n|}: n\in \mathbb{N}\}$ converges and the value of the limit is independent of the selection of the tiling F\o lner sequence $\{F_n: n\in \mathbb{N}\}$, in fact:
 \begin{equation*}
 \lim_{n\rightarrow \infty} \frac{f (F_n)}{|F_n|}= \inf_{F\in \mathcal{T}_G} \frac{f (F)}{|F|}\ (\text{and so}\ = \inf_{n\in \mathbb{N}} \frac{f (F_n)}{|F_n|}).
 \end{equation*}
 \end{prop}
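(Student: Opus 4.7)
The plan is to set $L := \inf_{F \in \mathcal{T}_G} \frac{f(F)}{|F|}$ and show both $\liminf_{n\to\infty} \frac{f(F_n)}{|F_n|} \ge L$ (trivial direction) and $\limsup_{n\to\infty} \frac{f(F_n)}{|F_n|} \le \frac{f(T)}{|T|}$ for every $T \in \mathcal{T}_G$ (hard direction). The lower bound is immediate: each $F_n$ itself belongs to $\mathcal{T}_G$, so $\frac{f(F_n)}{|F_n|} \ge L$ for every $n$, which also gives $\inf_{n\in\mathbb{N}} \frac{f(F_n)}{|F_n|} \ge L$.

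For the upper bound, I fix an arbitrary $T \in \mathcal{T}_G$ with tile-center set $G_T$, so that $\{Tc : c \in G_T\}$ partitions $G$. The idea is to approximate each $F_n$ by a union of full $T$-tiles. Concretely, I would let
\begin{equation*}
C_n := \{c \in G_T : Tc \subseteq F_n\}, \qquad R_n := F_n \setminus \bigsqcup_{c \in C_n} Tc.
\end{equation*}
A $c \in G_T$ contributes to the residual precisely when $Tc$ meets both $F_n$ and $G \setminus F_n$, which forces $c \in T^{-1}F_n \cap T^{-1}(G \setminus F_n)$. The F{\o}lner property of $\{F_n\}$ implies $|T^{-1}F_n \cap T^{-1}(G\setminus F_n)| = o(|F_n|)$, and hence $|R_n| \le |T|\cdot|T^{-1}F_n \cap T^{-1}(G\setminus F_n)| = o(|F_n|)$, while $|C_n|\cdot|T|/|F_n| \to 1$.

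Now I apply the two hypotheses. By right-invariance, $f(Tc) = f(T)$ for every $c \in G_T$ and $f(\{r\}) = f(\{e_G\})$ for every $r \in G$. By sub-additivity for disjoint unions, iterated over the (finitely many) tiles in $C_n$ and the (finitely many) singletons in $R_n$,
\begin{equation*}
f(F_n) \;\le\; \sum_{c \in C_n} f(Tc) + \sum_{r \in R_n} f(\{r\}) \;=\; |C_n|\, f(T) + |R_n|\, f(\{e_G\}).
\end{equation*}
Dividing by $|F_n|$ and using the two asymptotics above,
\begin{equation*}
\limsup_{n\to\infty} \frac{f(F_n)}{|F_n|} \;\le\; \frac{f(T)}{|T|}.
\end{equation*}
Taking the infimum over $T \in \mathcal{T}_G$ gives $\limsup \le L$, which together with the lower bound shows that the limit exists and equals $L$. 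Since the sequence is bounded below by $L$ and converges to $L$, one also has $\inf_{n\in\mathbb{N}}\frac{f(F_n)}{|F_n|} = L$, completing the proof.

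The main obstacle is the quantitative control of $|R_n|$: one needs that the F{\o}lner condition really does make the set of tile-centers whose $T$-tile straddles $\partial F_n$ negligible compared to $|F_n|/|T|$. This is a standard but not entirely cosmetic computation, reducing the $T$-boundary to a union of ordinary F{\o}lner-style boundaries $t^{-1}F_n \triangle F_n$ for $t$ ranging over $T$; everything else (right-invariance, sub-additivity, dividing through) is bookkeeping.
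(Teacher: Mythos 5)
Your proof is correct. Note that the paper itself offers no proof of this proposition --- it is imported verbatim from \cite[Proposition 2.5]{DZRDS} (with pointers to \cite{Lin-Wei, We, MO, WZ}) --- but your argument is precisely the standard one for the tiling version of this lemma: the lower bound is immediate from $F_n\in \mathcal{T}_G$, and the upper bound decomposes $F_n$ into full translates $Tc$ of a fixed tile $T$ plus a remainder of singletons whose size is $o(|F_n|)$ because the straddling centers lie in $T^{-1}F_n\cap T^{-1}(G\setminus F_n)$, which is exactly the set controlled by the $(T,\delta)$-invariance built into the paper's definition of a F\o lner sequence. No gaps.
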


\begin{prop} \label{1102111944}
 Let $f: \mathcal{F}_G\rightarrow \mathbb{R}$ be a function. Assume that $f (E g)= f (E)$ and $f (E\cap F)+ f (E\cup F)\le f (E)+ f (F)$ whenever $g\in G$ and $E, F\in \mathcal{F}_G$ (here, we set $f (\emptyset)= 0$ by convention). Then for any F\o lner sequence $\{F_n: n\in \mathbb{N}\}$ of $G$, the sequence $\{\frac{f (F_n)}{|F_n|}: n\in \mathbb{N}\}$ converges and the value of the limit is independent of the selection of the F\o lner sequence $\{F_n: n\in \mathbb{N}\}$, in fact:
 \begin{equation*}
 \lim_{n\rightarrow \infty} \frac{f (F_n)}{|F_n|}= \inf_{F\in \mathcal{F}_G} \frac{f (F)}{|F|}\ (\text{and so}\ = \inf_{n\in \mathbb{N}} \frac{f (F_n)}{|F_n|}).
 \end{equation*}
 \end{prop}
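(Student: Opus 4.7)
Set $\phi(F) := f(F)/|F|$ and $\mu := \inf_{F \in \mathcal{F}_G} \phi(F)$. Since each $F_n \in \mathcal{F}_G$, the trivial direction $\liminf_n \phi(F_n) \ge \mu$ is immediate, so the proof reduces to establishing $\limsup_n \phi(F_n) \le \mu$. The plan combines two ingredients: first, a super-additive dual of $f$ that produces Følner sets whose density $\phi$ approximates $\mu$; second, an Ornstein--Weiss quasi-tiling of $F_n$ by disjoint right translates of such ``good'' Følner sets. Strong sub-additivity is essential here, as it provides the one-sided Lipschitz bound that absorbs the boundary and overlap errors intrinsic to any quasi-tiling.

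As preliminary reductions, strong sub-additivity with $f(\emptyset) = 0$ yields disjoint sub-additivity $f(E \sqcup F) \le f(E) + f(F)$, and by iteration $f(A \sqcup B) \le f(A) + |B|\,c$ for any disjoint $A, B$, where $c := f(\{e_G\})$ is the common value of $f(\{g\})$ over $g \in G$ by right-invariance. Define the dual $\bar f(F) := |F|\,c - f(F)$: it is $G$-invariant, strongly super-additive, non-negative (from the iterated disjoint bound applied with $A = \emptyset$), and monotone, since for $A \subseteq B$ strong super-additivity with $(A, B \setminus A)$ gives $\bar f(B) \ge \bar f(A) + \bar f(B \setminus A) \ge \bar f(A)$. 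To construct good Følner tiles, fix $\epsilon > 0$, pick $F^0 \in \mathcal{F}_G$ with $\phi(F^0) < \mu + \epsilon/2$, and choose $A \in \mathcal{F}_G$ sufficiently invariant and $(F^0)^{-1}F^0$-separated (by greedy packing inside a large Følner set). Then $F^0 A = \bigsqcup_{a \in A} F^0 a$ is a Følner set, and disjoint super-additivity of $\bar f$ combined with right-invariance gives $\bar\phi(F^0 A) \ge \bar\phi(F^0)$, equivalently $\phi(F^0 A) \le \phi(F^0) < \mu + \epsilon/2$. Iterating with progressively finer invariance requirements yields nested Følner tiles $F^\ast_1 \subseteq \cdots \subseteq F^\ast_K$ each satisfying $\phi(F^\ast_k) < \mu + \epsilon$.

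Now invoke the Ornstein--Weiss quasi-tiling theorem with these tiles: for every sufficiently Følner $F_n$ there exist $C_k \subseteq G$ such that the right translates $\{F^\ast_k a : 1 \le k \le K,\, a \in C_k\}$ are pairwise disjoint, contained in $F_n$, and their union $U$ satisfies $|U| \ge (1 - \epsilon)|F_n|$. The disjoint Lipschitz bound, iterated disjoint sub-additivity, and right-invariance $f(F^\ast_k a) = f(F^\ast_k)$ give
\begin{equation*}
f(F_n) \;\le\; f(U) + |F_n \setminus U|\,c \;\le\; \sum_{k=1}^{K} |C_k|\,f(F^\ast_k) + \epsilon\,|F_n|\,|c|.
\end{equation*}
Dividing by $|F_n|$ and using $\phi(F^\ast_k) < \mu + \epsilon$ together with $\sum_k |C_k|\,|F^\ast_k|/|F_n| \le 1$ (a short case split on the sign of $\mu + \epsilon$ handles the arithmetic when $\mu < 0$) yields $\phi(F_n) \le \mu + O(\epsilon)$. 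Letting $\epsilon \to 0$ gives $\limsup_n \phi(F_n) \le \mu$, as required. The main obstacle is the invocation of the Ornstein--Weiss quasi-tiling machinery itself; strong sub-additivity, as opposed to the plain sub-additivity assumed in Proposition \ref{1103051237}, is precisely the feature that turns the requirement of an exact tiling Følner sequence into the more permissive condition that any Følner sequence suffices.
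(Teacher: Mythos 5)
Your reductions at the start are fine (disjoint sub\-additivity, the bound $f(A\sqcup B)\le f(A)+|B|c$, and the non-negativity and monotonicity of $\bar f$), but the proof has two genuine gaps, both in the quasi-tiling core. First, the construction of ``good'' tiles does not work: if $F^0$ is chosen only so that $\phi(F^0)<\mu+\epsilon/2$, it carries no invariance whatsoever, and a disjoint union $F^0A=\bigsqcup_{a\in A}F^0a$ of spread-out translates of $F^0$ has $K$-boundary of order $|A|\cdot|\partial_K F^0|$, i.e.\ comparable to $|F^0A|$ itself. Maximality of the separated set $A$ inside a large F\o lner set gives a density bound, not invariance; so $F^0A$ need not be $(K,\delta)$-invariant for the prescribed $(K,\delta)$ demanded by the Ornstein--Weiss hierarchy, and the supply of nested invariant tiles with $\phi<\mu+\epsilon$ is never actually produced. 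Second, the version of Ornstein--Weiss you invoke --- pairwise \emph{disjoint exact} right translates covering a $(1-\epsilon)$-fraction --- is not the standard theorem; the classical statement gives only $\epsilon$-disjoint translates. Disjointifying replaces each $F^\ast_ka$ by a proper subset, and for a merely strongly sub-additive $f$ there is no upper bound for $f$ of a subset of a tile in terms of $f$ of the tile (your own monotonicity of $\bar f$ goes the wrong way), so both the identity $f(F^\ast_ka)=f(F^\ast_k)$ and the bound $\phi(F^\ast_k)<\mu+\epsilon$ are lost exactly where they are needed. (A smaller issue: $\mu$ may equal $-\infty$, e.g.\ $f(F)=-|F|^2$ is strongly sub-additive, so the final arithmetic should be run against an arbitrary $M>\mu$ rather than against $\mu$ itself.)

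The intended argument --- the paper quotes this as \cite[Proposition 2.3]{DZRDS}, but the mechanism is visible in the proof of Theorem \ref{1102001916} --- is much lighter and avoids quasi-tilings entirely. Normalize by replacing $f(F)$ with $f(F)-|F|c$, which shifts $\phi$ by the constant $c$ and forces $f\le 0$. Fix an \emph{arbitrary} $T\in\mathcal{F}_G$, set $E_n=F_n\cap\bigcap_{t\in T}t^{-1}F_n$, so that $|E_n|/|F_n|\to 1$ and one has the exact fractional-cover identity \eqref{1102111848}, $1_{F_n}=\frac{1}{|T|}\sum_{g\in E_n}1_{Tg}+\sum_ja_j1_{E_j'}$. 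The key point, which is where strong sub-additivity (as opposed to plain sub-additivity) enters, is Lemma \ref{11103081701}: $1_E=\sum_ia_i1_{E_i}$ with $a_i>0$ implies $f(E)\le\sum_ia_if(E_i)$. Combined with $f(E_j')\le 0$ and right-invariance this yields $f(F_n)\le\frac{|E_n|}{|T|}f(T)$, hence $\limsup_n\phi(F_n)\le\phi(T)$ for every $T\in\mathcal{F}_G$, which together with the trivial bound $\liminf_n\phi(F_n)\ge\mu$ finishes the proof. Your closing remark correctly identifies strong sub-additivity as the feature that upgrades ``tiling F\o lner sequence'' to ``arbitrary F\o lner sequence'', but the mechanism by which it does so is its compatibility with fractional covers of bounded multiplicity, not the Ornstein--Weiss machinery.
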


The difference between Proposition \ref{1103051237} and Proposition \ref{1102111944} was shown in  \cite[Example 2.7]{DZRDS} in the special case of $G= \Z$.

   Let $\mathbf{D}= \{d_F: F\in \mathcal{F}_G\}\subseteq L^1 (Y, \mathcal{D}, \nu)$ be a sub-additive $G$-invariant family. By Proposition \ref{1103051237} the limit
   \begin{equation} \label{1105231544}
   \lim_{n\rightarrow \infty} \frac{1}{|F_n|} \int_Y d_{F_n} (y) d \nu (y)
   \end{equation}
   exists
   for (and independent of) any tiling F\o lner sequence $\{F_n: n\in \N\}$ of $G$, and is equal to
   \begin{equation*}
   \inf_{n\in \N} \frac{1}{|F_n|} \int_Y d_{F_n} (y) d \nu (y)= \inf_{T\in \mathcal{T}_G} \frac{1}{|T|} \int_Y d_T (y) d \nu (y)< \infty.
   \end{equation*}
 Now if $\mathbf{D}$ is a strongly sub-additive $G$-invariant family, alternatively using Proposition \ref{1102111944}
 the limit \eqref{1105231544} exists for (and independent of) any F\o lner sequence $\{F_n: n\in \N\}$ of $G$, and is equal to
   \begin{equation*}
 \inf_{n\in \N} \frac{1}{|F_n|} \int_Y d_{F_n} (y) d \nu (y)= \inf_{F\in \mathcal{F}_G} \frac{1}{|F|} \int_Y d_F (y) d \nu (y)< \infty.
   \end{equation*}
 Dually, if $\mathbf{D}$ is a sup-additive or strongly sup-additive $G$-invariant family, we can talk about the limit similarly.
    We will denote by $\nu (\mathbf{D})$ these limits in the sequel. Remark that they need not to be a finite constant.

   Recall that the sub-$\sigma$-algebra $\mathcal{I}$ is introduced in previous section as $\{D\in \mathcal{D}: \nu (g D\Delta D)= 0\ \text{for each}\ g\in G\}$. Let $\nu= \int_Y \nu_y d \nu (y)$ be the disintegration of $\nu$ over $\mathcal{I}$. Then $(Y, \mathcal{D}, \nu_y, G)$ will be an ergodic MDS for $\nu$-a.e. $y\in Y$. The disintegration is the \emph{ergodic decomposition of $\nu$} (cf
    \cite[Theorem 3.22]{G1}), which can be characterized as follows: for each $f\in L^1 (Y, \mathcal{D}, \nu)$, $f\in L^1 (Y, \mathcal{D}, \nu_y)$ for $\nu$-a.e. $y\in Y$ and the function $y\mapsto \int_Y f d \nu_y$ is in $L^1 (Y, \mathcal{I}, \nu)$. In fact,
     $\int_Y f d \nu_y= \E (f| \mathcal{I}) (y)$ for $\nu$-a.e. $y\in Y$ and hence $\int_Y (\int_Y f d \nu_y) d \nu (y)= \int_Y f d \nu$.

   Thus, we have:

   \begin{prop} \label{ergodic decomposition}
 Let $\mathbf{D}= \{d_F: F\in \mathcal{F}_G\}\subseteq L^1 (Y, \mathcal{D}, \nu)$ be a sub-additive (or strongly sub-additive, sup-additive, strongly sup-additive, and so on) $G$-invariant family. Assume that $\nu= \int_Y \nu_y d \nu (y)$ is the ergodic decomposition of $\nu$. Then
 \begin{equation} \label{1106012236}
 \nu (\mathbf{D})= \int_Y \nu_y (\mathbf{D}) d \nu (y).
 \end{equation}
    \end{prop}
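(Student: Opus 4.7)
The plan is to use the ergodic decomposition formula pointwise in $n$ and then interchange the limit with the outer integral. I focus on the sub-additive case; the strongly sub-additive case is identical modulo invoking Proposition~\ref{1102111944} in place of Proposition~\ref{1103051237}, and the sup-additive variants follow by applying the sub-additive case to $-\mathbf{D}$.

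Fix a tiling F\o lner sequence $\{F_n: n\in \N\}$ of $G$; this is legitimate by Proposition~\ref{1103051237}. For each $n$, the characterizing property of the ergodic decomposition yields
\begin{equation*}
\frac{1}{|F_n|}\int_Y d_{F_n}\, d\nu \;=\; \int_Y g_n(y)\, d\nu(y), \quad \text{where} \quad g_n(y)\;:=\; \frac{1}{|F_n|}\int_Y d_{F_n}\, d\nu_y.
\end{equation*}
For $\nu$-a.e.\ $y$, the measure $\nu_y$ is $G$-invariant and $(Y,\mathcal{D},\nu_y,G)$ is ergodic, so the set function $F\mapsto \int_Y d_F\, d\nu_y$ inherits both $G$-invariance and sub-additivity from $\mathbf{D}$. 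Proposition~\ref{1103051237} applied in each ergodic component therefore gives $g_n(y)\to \nu_y(\mathbf{D})=\inf_m g_m(y)$; in particular $g_n(y)\ge \nu_y(\mathbf{D})$ for every $n$.

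To legitimize the interchange of limit and outer integral, I would extract an integrable majorant for $g_n$. Iterated sub-additivity combined with $G$-invariance yields $d_F(y)\le \sum_{g\in F} d_{\{e_G\}}(gy)$ for $\nu$-a.e.\ $y$; integrating against $\nu_y$ and using its $G$-invariance produces the uniform bound
\begin{equation*}
g_n(y) \;\le\; \int_Y d_{\{e_G\}}\, d\nu_y \;=\; \E(d_{\{e_G\}}\mid \mathcal{I})(y) \;=:\; c(y)\in L^1(\nu).
\end{equation*}
Fatou's lemma applied to the non-negative sequence $c-g_n$ then gives $\limsup_n \int_Y g_n\, d\nu \le \int_Y \nu_y(\mathbf{D})\, d\nu(y)$. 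For the reverse inequality, $\inf_n \int_Y g_n\, d\nu \ge \int_Y \inf_m g_m(y)\, d\nu(y) = \int_Y \nu_y(\mathbf{D})\, d\nu(y)$, and the left side equals $\nu(\mathbf{D})$ because, again by Proposition~\ref{1103051237}, the scalar sequence $\int_Y g_n\, d\nu$ converges to its own infimum. The two estimates pin down \eqref{1106012236}.

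The main technical obstacle is the potential non-integrability of $\nu_y(\mathbf{D})$, which a priori may equal $-\infty$ on a set of positive measure; neither dominated nor monotone convergence applies directly to $\{g_n\}$. The uniform upper bound $c$ supplied by sub-additivity is precisely what allows Fatou's lemma to be deployed from above and so close the interchange; in the sup-additive case the role of $c$ is played by an integrable lower bound obtained from the analogous estimate applied to $-\mathbf{D}$.
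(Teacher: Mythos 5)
Your argument is correct and is essentially the paper's own proof with the roles of sub- and sup-additivity interchanged: one inequality comes from comparing $\inf_n\int g_n\,d\nu$ with $\int\inf_n g_n\,d\nu$, and the other from Fatou's lemma applied to the non-negative sequence obtained by correcting $d_{F_n}$ with $\sum_{g\in F_n}d_{\{e_G\}}(g\,\cdot)$, which is exactly the paper's $d_{F_n}'$. No substantive difference.
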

    \begin{proof}
    Dually, we only consider the case of $\mathbf{D}$ being sup-additive and $\{F_n: n\in \N\}$ a tiling F\o lner sequence of $G$. And so $\nu (\mathbf{D})> -\infty$. Observe that $\int_Y d_F d \nu= \int_Y (\int_Y d_F d \nu_y) d \nu (y)$ for each $F\in \mathcal{F}_G$, then: on one hand,
    $$\nu (\mathbf{D})= \sup_{T\in \mathcal{T}_G} \frac{1}{|T|} \int_Y d_T d \nu\le \int_Y (\sup_{T\in \mathcal{T}_G} \frac{1}{|T|} \int_Y d_T d \nu_y) d \nu (y)= \int_Y \nu_y (\mathbf{D}) d \nu (y);$$
    on the other hand, for
    $$d_{F_n}' (y)= d_{F_n} (y)- \sum_{g\in F_n} d_{\{e_G\}} (g y)\ge 0,$$
    using Fatou's Lemma one has
    \begin{eqnarray*}
   & & \int_Y \nu_y (\mathbf{D}) d \nu (y)- \int_Y (\int_Y d_{\{e_G\}} d \nu_y) d \nu (y) \\
   &= & \int_Y (\liminf_{n\rightarrow \infty} \frac{1}{|F_n|} \int_Y d_{F_n}' d \nu_y) d \nu (y) \\
   &\le & \liminf_{n\rightarrow \infty} \frac{1}{|F_n|} \int_Y (\int_Y d_{F_n}' d \nu_y) d \nu (y)= \nu (\mathbf{D})- \int_Y d_{\{e_G\}} d \nu,
    \end{eqnarray*}
    equivalently, $\int_Y \nu_y (\mathbf{D}) d \nu (y)\le \nu (\mathbf{D})$. Summing up, we obtain \eqref{1106012236}.
    \end{proof}

In the remainder of the section, we present a version of the maximal
 inequality following the ideas of \cite{Krengel}, which will be used
 in later sections.

The following result is a version of \cite[Chapter 6, Theorem 4.2]{Krengel}: in fact, Krengel  states a version compatible with \eqref{1103072017}, while we need an equivalent version in the style of equation \eqref{1103072016}. We present a proof of it here for completeness:  the ideas are taken from \cite{Krengel}.

\begin{lem} \label{1103052244}
Let $G$ be a countable discrete amenable group.
 Assume that $\mathbf{D}= \{d_F: F\in \mathcal{F}_G\}\subseteq L^1 (Y, \mathcal{D}, \nu)$ is a non-negative sup-additive $G$-invariant family and $\{F_n: n\in \mathbb{N}\}$ is a F\o lner sequence $G$ satisfying the Tempelman condition with constant $M> 0$. Then
 \begin{eqnarray}
 \nu (Y_{\alpha, N})&\le & \frac{M}{\alpha}\cdot \inf_{n\in \N} \frac{1}{|F_n\cap \bigcap\limits_{i= 1}^N \bigcap\limits_{g\in F_i} g^{- 1} F_n|} \int_Y d_{F_n} (y) d \nu (y) \label{1103072018} \\
 &\le & \frac{M}{\alpha}\cdot \liminf_{n\rightarrow \infty} \frac{1}{|F_n|} \int_Y d_{F_n} (y) d \nu (y) \label{1103072016} \\
 &\le & \frac{M}{\alpha}\cdot \sup_{F\in \mathcal{F}_G} \frac{1}{|F|} \int_Y d_F (y) d \nu (y) \label{1103072017}
 \end{eqnarray}
 for each $\alpha> 0$ and any $N\in \N$,
 where
 $$Y_{\alpha, N}= \{y\in Y: \max_{k= 1, \cdots, N} \frac{d_{F_k} (y)}{|F_k|} > \alpha\}.$$
 \end{lem}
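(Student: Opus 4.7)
The plan is to reduce to a pointwise maximal estimate obtained from a Vitali--Wiener covering argument, and then integrate. Fix $n, N\in \N$ and $\alpha> 0$. Set $F_n^*= F_n\cap \bigcap_{i= 1}^N \bigcap_{g\in F_i} g^{- 1} F_n= \{g\in F_n: F_i g\subseteq F_n\ \text{for all}\ i\le N\}$. For each $y\in Y$, consider the ``hitting set'' $B_n(y)= \{g\in F_n^*: g y\in Y_{\alpha, N}\}$, and for each $g\in B_n(y)$ fix the least $k(g)\in \{1, \ldots, N\}$ with $d_{F_{k(g)}}(g y)> \alpha |F_{k(g)}|$; by $G$-invariance this equals $d_{F_{k(g)} g}(y)> \alpha |F_{k(g)}|$, and by construction $F_{k(g)} g\subseteq F_n$.

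Next I would run a greedy selection: order the elements of $B_n(y)$ by \emph{decreasing} value of $k(g)$ and select $S\subseteq B_n(y)$ to be a maximal subset with $\{F_{k(g')} g': g'\in S\}$ pairwise disjoint. If $g\in B_n(y)\setminus S$ was rejected, it overlapped with some $F_{k(g')} g'$ with $k(g')\ge k(g)$, hence $g\in F_{k(g)}^{- 1} F_{k(g')} g'\subseteq \bigl(\bigcup_{k\le k(g')} F_k^{- 1} F_{k(g')}\bigr) g'$. The Tempelman condition now forces
\begin{equation*}
|B_n (y)|\le \sum_{g'\in S} \Bigl|\bigcup_{k\le k(g')} F_k^{- 1} F_{k(g')}\Bigr|\le M \sum_{g'\in S} |F_{k(g')}|.
\end{equation*}
Using sup-additivity and non-negativity of $\mathbf{D}$ over the disjoint family $\{F_{k(g')} g'\}_{g'\in S}$ inside $F_n$, I get
\begin{equation*}
d_{F_n} (y)\ge \sum_{g'\in S} d_{F_{k(g')} g'} (y)> \alpha \sum_{g'\in S} |F_{k(g')}|\ge \frac{\alpha}{M}\, |B_n (y)|.
\end{equation*}

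The conclusion then follows by Fubini/invariance. Since $G$ acts by measure-preserving transformations,
\begin{equation*}
\int_Y |B_n (y)|\, d\nu (y)= \sum_{g\in F_n^*} \nu (g^{- 1} Y_{\alpha, N})= |F_n^*|\cdot \nu (Y_{\alpha, N}),
\end{equation*}
so integrating the pointwise bound yields $|F_n^*|\, \nu (Y_{\alpha, N})\le \frac{M}{\alpha} \int_Y d_{F_n} d\nu$; taking the infimum over $n$ produces \eqref{1103072018}. Inequality \eqref{1103072016} then follows because $N$ is fixed and $\{F_n\}$ is F\o lner: each $F_n\setminus g^{- 1} F_n$ has size $o(|F_n|)$, so $F_n\setminus F_n^*\subseteq \bigcup_{i\le N}\bigcup_{g\in F_i}(F_n\setminus g^{- 1} F_n)$ satisfies $|F_n^*|/|F_n|\to 1$, and the infimum in \eqref{1103072018} is therefore dominated by $\liminf_n \frac{1}{|F_n|}\int d_{F_n} d\nu$. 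Finally \eqref{1103072017} is the trivial bound of a liminf by a supremum.

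The one delicate step is the Vitali-style selection in the right order: processing $B_n(y)$ in decreasing $k(g)$ is what makes $k(g')\ge k(g)$ available at the rejection step, which is exactly the form in which the Tempelman hypothesis $|\bigcup_{k\le m} F_k^{- 1} F_m|\le M |F_m|$ can be applied. Everything else (sup-additivity giving the $d_{F_n}(y)$ lower bound, $G$-invariance translating values along the orbit, and the F\o lner estimate comparing $|F_n^*|$ with $|F_n|$) is routine once this covering lemma is in place.
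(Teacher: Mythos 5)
Your proposal is correct and follows essentially the same route as the paper's proof: the same set $F_n^*$, the same partition of the hitting set by the least index $k(g)$, the same Vitali-type greedy selection in decreasing order of $k$ feeding into the Tempelman bound, the same sup-additivity/non-negativity estimate for $d_{F_n}(y)$, and the same Fubini step. The passage from \eqref{1103072018} to \eqref{1103072016} via $|F_n^*|/|F_n|\to 1$ is also exactly the paper's (implicit) argument.
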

 \begin{proof}
 In fact, the proof will be complete once we prove \eqref{1103072018}.

We may set $d_\emptyset (y)= 0$ for each $y\in Y$ without
 any loss of generality.

Let $y\in Y$ and $n\in \N$ such that $F_n^*\doteq F_n\cap \bigcap\limits_{i= 1}^N \bigcap\limits_{g\in F_i} g^{- 1} F_n\neq \emptyset$. As $\{F_m: m\in \mathbb{N}\}$ is a F\o lner sequence of $G$, $F_m^*\neq \emptyset$ once $m\in \N$ is large enough. Set
 $$C_1= \{g\in F_n^*: \frac{d_{F_1} (g y)}{|F_1|}> \alpha\},$$
 and for $j= 2, \cdots, N$, set
 $$C_j= \{g\in F_n^*: \frac{d_{F_j} (g y)}{|F_j|}> \alpha\}\setminus \bigcup_{i= 1}^{j- 1} C_i.$$

Now let $C_N'\subseteq C_N$ be a maximal disjoint family in $\{F_N c: c\in C_N\}$. In particular,
  $$C_N\subseteq F_N^{- 1} F_N C_N'.$$
  Once the $C_N'\subseteq C_N, \cdots, C_i'\subseteq C_i$ have been constructed for some $i> 1$, then let $C_{i- 1}'\subseteq C_{i- 1}$ be a maximal disjoint family in $\{F_{i- 1} c: c\in C_{i- 1}\}$ which is also disjoint from the elements in $\{F_j c_j: c_j\in C_j', j= i, \cdots, N\}$. In particular,
  $$C_{i- 1}\subseteq F_{i- 1}^{- 1} (\bigcup_{j= i- 1}^N F_j C_j').$$

 From the construction we have
  $$\{g\in F_n^*: g y\in Y_{\alpha, N}\}= \bigcup_{i= 1}^N C_i\subseteq \bigcup_{i= 1}^N \bigcup_{j= 1}^i F_j^{- 1} F_i C_i'.$$
  Moreover, as the sequence $\{F_n: n\in \mathbb{N}\}$ satisfies the Tempelman condition, one has
  \begin{equation} \label{1103031503}
  |\{g\in F_n^*: g y\in Y_{\alpha, N}\}|\le \sum_{i= 1}^N |\bigcup_{j= 1}^i F_j^{- 1} F_i|\cdot |C_i'|\le M \sum_{i= 1}^N |F_i|\cdot |C_i'|.
  \end{equation}

As $\mathbf{D}= \{d_F: F\in \mathcal{F}_G\}\subseteq L^1 (Y, \mathcal{D}, \nu)$ is a non-negative sup-additive (and so monotone) $G$-invariant family, by the construction of $C_i, C_i', i= 1, \cdots, N$, one has
 \begin{eqnarray} \label{1103031606}
 d_{F_n} (y)\ge d_{\bigcup\limits_{i= 1}^N F_i C_i'} (y)&\ge & \sum_{i= 1}^N \sum_{g_i\in C_i'} d_{F_i g_i} (y)\nonumber \\
 &> & \alpha \sum_{i= 1}^N |F_i|\cdot |C_i'|\nonumber \\
 &\ge & \frac{\alpha}{M} |\{g\in F_n^*: g y\in Y_{\alpha, N}\}|\ (\text{using \eqref{1103031503}})
 \end{eqnarray}
 for $\nu$-a.e. $y\in Y$.
 Thus applying Fubini's Theorem we obtain
 \begin{eqnarray*} \label{1103031613}
 \frac{1}{|F_n^*|} \int_Y d_{F_n} (y) d \nu (y)&\ge & \frac{1}{|F_n^*|}\cdot \frac{\alpha}{M} \int_Y |\{g\in F_n^*: g y\in Y_{\alpha, N}\}| d \nu (y)\ (\text{using \eqref{1103031606}})\nonumber \\
 &= & \frac{1}{|F_n^*|}\cdot \frac{\alpha}{M} \sum_{g\in F_n^*} \int_Y 1_{Y_{\alpha, N}} (g y) d \nu (y)= \frac{\alpha}{M}\cdot \nu (Y_{\alpha, N}),
 \end{eqnarray*}
 which implies \eqref{1103072018}.
 This finishes our proof.
 \end{proof}

\begin{rem} \label{1103072309}
  On the basis of \cite[Theorem 3.2]{L} or \cite[Section 7]{We}, we conjecture that it is possible to  strengthen Lemma \ref{1103052244} by replacing the assumption that the  F\o lner sequence satisfies the Tempelman condition by the assumption that it is a tempered F\o lner sequence. We have not so far been able to prove this, however.
 \end{rem}

As a direct corollary of \eqref{1103072016} and \eqref{1103072017}, we have:

\begin{cor} \label{1103072031}
 Assume that $\mathbf{D}= \{d_F: F\in \mathcal{F}_G\}\subseteq L^1 (Y, \mathcal{D}, \nu)$ is a non-negative sup-additive $G$-invariant family and $\{F_n: n\in \mathbb{N}\}$ is a F\o lner sequence of $G$ satisfying Tempelman condition with constant $M> 0$. If either each $F_n, n\in \N$ tiles $G$, or the family $\mathbf{D}$ is strongly sup-additive, then, for any $\alpha> 0$,
  $$\nu (\{y\in Y: \sup_{k\in \N} \frac{d_{F_k} (y)}{|F_k|} > \alpha\})\le \frac{M}{\alpha} \nu (\mathbf{D}).$$
 \end{cor}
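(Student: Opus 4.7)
The plan is to combine Lemma \ref{1103052244} with continuity of $\nu$ from below and the convergence of the integrated averages $\frac{1}{|F_n|}\int d_{F_n}\,d\nu$ to $\nu(\mathbf{D})$.

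First I would observe that the sets
$$Y_{\alpha,N}=\bigl\{y\in Y:\max_{k=1,\ldots,N}\tfrac{d_{F_k}(y)}{|F_k|}>\alpha\bigr\}$$
are monotonically increasing in $N$ and satisfy
$$\bigcup_{N\in\N} Y_{\alpha,N}=\bigl\{y\in Y:\sup_{k\in\N}\tfrac{d_{F_k}(y)}{|F_k|}>\alpha\bigr\}.$$
By continuity of $\nu$ from below, it therefore suffices to produce a bound on $\nu(Y_{\alpha,N})$ that is uniform in $N$ and has the right form.

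Next, I would apply the middle inequality \eqref{1103072016} from Lemma \ref{1103052244}, which gives exactly the uniform estimate
$$\nu(Y_{\alpha,N})\le \frac{M}{\alpha}\cdot\liminf_{n\to\infty}\frac{1}{|F_n|}\int_Y d_{F_n}(y)\,d\nu(y)$$
for every $N\in\N$. Passing to the limit $N\to\infty$ on the left, it remains only to identify this $\liminf$ with $\nu(\mathbf{D})$.

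For this identification I would use the paragraph following \eqref{1105231544} (the sup-additive dual of Propositions \ref{1103051237} and \ref{1102111944}). In the first case, each $F_n$ tiles $G$, so $\{F_n:n\in\N\}$ is a tiling F\o lner sequence and the dual of Proposition \ref{1103051237} gives that $\tfrac{1}{|F_n|}\int_Y d_{F_n}\,d\nu$ actually converges to $\nu(\mathbf{D})$. In the second case, $\mathbf{D}$ is strongly sup-additive, so the dual of Proposition \ref{1102111944} applies to any F\o lner sequence and yields the same conclusion. Either way the $\liminf$ equals $\nu(\mathbf{D})$, and the corollary follows. There is no substantial obstacle here beyond correctly invoking the ``dual'' (sup-additive) versions of the two propositions and ensuring the monotone-convergence-of-measure step is applied to the right nested family.
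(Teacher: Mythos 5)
Your proposal is correct and follows essentially the same route as the paper, which presents the corollary as a direct consequence of \eqref{1103072016} and \eqref{1103072017}: the uniform-in-$N$ maximal bound from Lemma \ref{1103052244}, the identification of the $\liminf$ (resp.\ the supremum over $\mathcal{F}_G$) with $\nu(\mathbf{D})$ via the sup-additive duals of Propositions \ref{1103051237} and \ref{1102111944}, and continuity of $\nu$ from below on the increasing sets $Y_{\alpha,N}$.
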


\section{The superior limit behavior of the family} \label{super}

Motivated by \cite[Proposition 9.1 and Proposition 10.4]{DZRDS} and their proofs, we aim to study the behaviour of the $\limsup$ in \eqref{1105231650}.  Our main results are Theorem \ref{1103061916} and Theorem \ref{1102001916}. The conclusion of Theorem \ref{1103061916} requires bi-invariance of the family $\mathbf{D}$, and the conclusion of Theorem \ref{1102001916} requires that it is strongly sub-additive.

In general it is not easy to understand the limit behavior of a family of integrable functions. As shown by \cite[Chapter 9 and Chapter 10]{DZRDS}, both bi-invariance and strong sub-additivity provide cases where we can study assumption $(\spadesuit)$ of \cite[Theorem 7.1]{DZRDS}, which reflects a kind of limiting behaviour. Property $(\spadesuit)$ appears naturally when one considers actions of the integers on a Lebesgue space, and strong sub-additivity has proven its importance in the study of measure-theoretic entropy theory for the actions of an amenable groups on a Lebesgue spaces (cf \cite{DZRDS, MO}).

The main results of this section are as follows.

\begin{thm} \label{1103061916}
 Let $\mathbf{D}= \{d_F: F\in \mathcal{F}_G\}\subseteq L^1 (Y, \mathcal{D}, \nu)$ be a sub-additive $G$-bi-invariant family and $\{F_n: n\in \mathbb{N}\}$ a tempered tiling F\o lner sequence of $G$. Then
  \begin{equation} \label{1103081924}
  \limsup_{n\rightarrow \infty} \frac{1}{|F_n|} d_{F_n} (y)= \inf_{T\in \mathcal{T}_G} \frac{1}{|T|} \E (d_T| \mathcal{I}) (y)
  \end{equation}
   for $\nu$-a.e. $y\in Y$,
  which is an invariant measurable function over $(Y, \mathcal{D}, \nu)$, and
  \begin{equation} \label{1103081657}
  \int_Y \inf_{T\in \mathcal{T}_G} \frac{1}{|T|} \E (d_T| \mathcal{I}) (y) d \nu (y)= \nu (\mathbf{D}).
  \end{equation}
   \end{thm}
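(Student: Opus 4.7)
Set $\bar d(y) := \limsup_n \frac{d_{F_n}(y)}{|F_n|}$. My plan is to prove the a.e.\ pointwise upper bound $\bar d(y) \le \inf_{T \in \mathcal{T}_G} \frac{\E(d_T|\mathcal{I})(y)}{|T|}$ via a tile decomposition of $F_n$ combined with Theorem \ref{1006131615}, then use a Fatou argument together with Proposition \ref{1103051237} to obtain the matching integral identity $\int \bar d\,d\nu = \nu(\mathbf{D})$, which forces pointwise equality a.e.

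For the upper bound, fix $T \in \mathcal{T}_G$; replacing $T$ by $Tt^{-1}$ for some $t \in T$ (which preserves $|T|$, the tiling property, and $\E(d_T|\mathcal I)$), we may assume $e_G \in T$, and write $G = \bigsqcup_{c \in G_T} Tc$. Let $C_n := \{c \in G_T : Tc \subseteq F_n\}$ and $R_n := F_n \setminus \bigsqcup_{c \in C_n} Tc$. The F\o lner condition applied to the finite set $TT^{-1}$ gives $|R_n|/|F_n| \to 0$ and $|T||C_n|/|F_n| \to 1$. Iterated sub-additivity together with $G$-invariance ($d_{Tc}(y) = d_T(cy)$) yields
\begin{equation*}
\frac{d_{F_n}(y)}{|F_n|} \le \frac{|C_n|}{|F_n|} \cdot \frac{1}{|C_n|} \sum_{c \in C_n} d_T(cy) + \frac{d_{R_n}(y)}{|F_n|}.
\end{equation*}
The remainder is dominated by $\frac{1}{|F_n|}\sum_{g \in R_n} d_{\{e_G\}}(gy)$ (iterated sub-additivity and $G$-invariance), which tends to $0$ a.e.\ because $|R_n|/|F_n| \to 0$ while the maximal inequality Lemma \ref{1103052244} (along a sub-sequence satisfying the Tempelman condition, if necessary) rules out pointwise concentration of $|d_{\{e_G\}}|$ on the thin set $R_n$. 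The main-term estimate $\limsup_n \frac{1}{|C_n|}\sum_{c \in C_n} d_T(cy) \le \E(d_T|\mathcal{I})(y)$ a.e.\ is the technical heart: bi-invariance permits rewriting $d_T(cy) = d_{cT}(y)$, and together with the tile-partition structure and the full Birkhoff sum $\frac{1}{|F_n|}\sum_{g \in F_n}d_T(gy) \to \E(d_T|\mathcal I)(y)$ from Theorem \ref{1006131615}, one carries out a sub-additive manipulation following \cite[Proposition 9.1]{DZRDS} to extract the $1/|T|$ normalization. Combined with $|C_n|/|F_n| \to 1/|T|$, this gives $\bar d(y) \le \frac{\E(d_T|\mathcal I)(y)}{|T|}$ a.e.; taking the infimum over $T \in \mathcal T_G$ yields the upper bound.

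For the integral identity, integrating the pointwise bound and applying Proposition \ref{1103051237} to the sub-additive $G$-invariant numerical function $F \mapsto \int d_F\,d\nu$ gives
\begin{equation*}
\int \bar d\,d\nu \le \int \inf_{T\in\mathcal T_G} \frac{\E(d_T|\mathcal I)}{|T|}\,d\nu \le \inf_{T\in\mathcal T_G} \frac{1}{|T|}\int d_T\,d\nu = \nu(\mathbf{D}).
\end{equation*}
For the reverse inequality, the sequence $\delta_n(y) := \frac{1}{|F_n|}\sum_{g\in F_n} d_{\{e_G\}}(gy) - \frac{d_{F_n}(y)}{|F_n|}$ is non-negative by iterated sub-additivity. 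Fatou's Lemma, together with the $L^1$ convergence in Theorem \ref{1006131615} (yielding $\liminf_n \delta_n = \E(d_{\{e_G\}}|\mathcal I) - \bar d$ a.e.\ and $\int\delta_n\,d\nu \to \int d_{\{e_G\}}\,d\nu - \nu(\mathbf{D})$ by Proposition \ref{1103051237}), produces $\int \bar d\,d\nu \ge \nu(\mathbf{D})$. Therefore $\int\bar d\,d\nu = \nu(\mathbf{D}) = \int \inf_{T\in\mathcal T_G} \frac{\E(d_T|\mathcal I)}{|T|}\,d\nu$; since the integrand $\inf_{T\in\mathcal T_G}\frac{\E(d_T|\mathcal I)(y)}{|T|} - \bar d(y)$ is non-negative with zero integral, \eqref{1103081924} holds a.e., and \eqref{1103081657} follows immediately, the right-hand side of \eqref{1103081924} being $\mathcal I$-measurable as an infimum of $\mathcal I$-measurable functions.

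The principal obstacle is the main-term bound in the upper-bound step: since $\{C_n\}$ is not a F\o lner sequence of $G$ in general, Theorem \ref{1006131615} does not apply to the partial sum over tile-centers directly, and one must carefully exploit bi-invariance (as opposed to mere $G$-invariance) to reduce this sum to a genuine ergodic average with the correct $1/|T|$-normalization --- this is precisely why bi-invariance is hypothesized in the theorem.
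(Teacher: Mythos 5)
There is a genuine gap, and you have in fact located it yourself: the ``main-term estimate'' $\limsup_n \frac{1}{|C_n|}\sum_{c\in C_n} d_T(cy)\le \E(d_T|\mathcal I)(y)$ is asserted but never proved, and it is precisely the hard step. Your decomposition of $F_n$ into whole tiles $Tc$, $c\in C_n$, plus a thin remainder produces an average over the set of tile centers $C_n$, which has density $1/|T|$ in $F_n$ and is not a F\o lner sequence, so Theorem \ref{1006131615} does not apply to it; saying that bi-invariance ``permits rewriting $d_T(cy)=d_{cT}(y)$'' and that ``one carries out a sub-additive manipulation following [DZRDS, Proposition 9.1]'' does not close this, and your closing paragraph concedes as much. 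The paper sidesteps the sparse-center average entirely: it first reduces to the case where $-\mathbf D$ is non-negative (subtracting the additive part $\sum_{g\in F}d_{\{e_G\}}(g\cdot)$), and then invokes Lemma \ref{01103081701} (= [DZRDS, Lemma 10.5]), whose conclusion is an inequality $d_{F_n}(y)\le \frac{1}{|T|}\sum_{g\in H_n}d_T(gy)$ in which $H_n$ is a \emph{full-density} subset of $F_n$; by Lemma \ref{1103082044} the $H_n$ form a tempered F\o lner sequence and Theorem \ref{1006131615} applies directly. The content of that lemma is exactly the conversion of a sum over tile centers into $\frac{1}{|T|}$ times a sum over (almost) all of $F_n$, achieved by averaging over the $|T|$ phases of the tiling, and this is where the identity $d_{Fg}(y)=d_{gF}(y)=d_F(gy)$ --- i.e.\ bi-invariance --- is actually consumed. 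Without reproducing that argument your upper bound is not established.

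Two secondary points. First, your treatment of the remainder is shaky: Lemma \ref{1103052244} is a weak-type maximal inequality for non-negative sup-additive families and does not by itself give $\frac{1}{|F_n|}\sum_{g\in R_n}|d_{\{e_G\}}(gy)|\to 0$ a.e.; the clean fix is either the paper's normalization making $-\mathbf D$ non-negative (so $d_{R_n}\le 0$ can be discarded), or applying Theorem \ref{1006131615} to both $\{F_n\}$ and $\{F_n\setminus R_n\}$ via Lemma \ref{1103082044} and subtracting. Second, your endgame --- ``non-negative integrand with zero integral is zero a.e.'' --- is valid only when $\nu(\mathbf D)>-\infty$; the theorem does not assume this, and when $\nu(\mathbf D)=-\infty$ both integrals are $-\infty$ and the conclusion does not follow. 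The paper handles this by passing to the ergodic decomposition, where $\inf_{T\in\mathcal T_G}\frac{1}{|T|}\E(d_T|\mathcal I)$ is the a.e.\ constant $\nu(\mathbf D)$ by Proposition \ref{1103051237} and the pointwise upper bound alone forces equality when that constant is $-\infty$. Your Fatou lower bound and the finite-case combination are correct and agree with the paper's.
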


 \begin{thm} \label{1102001916}
 Let $\mathbf{D}= \{d_F: F\in \mathcal{F}_G\}\subseteq L^1 (Y, \mathcal{D}, \nu)$ be a strongly sub-additive $G$-invariant family and $\{F_n: n\in \mathbb{N}\}$ a tempered F\o lner sequence of $G$.
  Then
  \begin{equation} \label{1102001657}
  \limsup_{n\rightarrow \infty} \frac{1}{|F_n|} d_{F_n} (y)= \inf_{F\in \mathcal{F}_G} \frac{1}{|F|} \E (d_F| \mathcal{I}) (y)
  \end{equation}
 for $\nu$-a.e. $y\in Y$, which is an invariant measurable function over $(Y, \mathcal{D}, \nu)$, and
  \begin{equation} \label{11103081924}
  \int_Y \inf_{F\in \mathcal{F}_G} \frac{1}{|F|} \E (d_F| \mathcal{I}) (y) d \nu (y)= \nu (\mathbf{D}).
  \end{equation}
  \end{thm}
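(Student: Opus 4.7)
\emph{Reduction to a non-negative, monotone, strongly sup-additive companion.} I would first replace $\mathbf D$ by the companion family
$$\widetilde d_F(y)\doteq \sum_{g\in F}d_{\{e_G\}}(gy)-d_F(y).$$
A direct check using strong sub-additivity of $\mathbf D$, $d_\emptyset=0$, and $G$-invariance shows that $\widetilde{\mathbf D}=\{\widetilde d_F\}$ is $G$-invariant, non-negative (since sub-additivity gives $d_F\le\sum_g d_{\{g\}}$), monotone (for $E\subseteq F$, strong sup-additivity yields $\widetilde d_F\ge \widetilde d_E+\widetilde d_{F\setminus E}\ge \widetilde d_E$), and strongly sup-additive. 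Applying Theorem \ref{1006131615} to $d_{\{e_G\}}\in L^1$ yields $|F_n|^{-1}\sum_{g\in F_n}d_{\{e_G\}}(gy)\to \E(d_{\{e_G\}}|\mathcal I)(y)$ a.e. Combining this with $\E(d_F|\mathcal I)=|F|\E(d_{\{e_G\}}|\mathcal I)-\E(\widetilde d_F|\mathcal I)$, the identity \eqref{1102001657} becomes equivalent to
$$\liminf_{n\to\infty}\frac{\widetilde d_{F_n}(y)}{|F_n|}=\sup_{F\in\mathcal F_G}\frac{\E(\widetilde d_F|\mathcal I)(y)}{|F|}\qquad\nu\text{-a.e.}\qquad(\star)$$

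\emph{Lower bound in $(\star)$.} Fix $F\in\mathcal F_G$ and $\epsilon>0$. By Ornstein--Weiss quasi-tiling, for $n$ sufficiently large there is a disjoint collection $\{Fc_{n,j}:j\in J_n\}$ of translates of $F$ contained in $F_n$ with $|\bigsqcup_{j}Fc_{n,j}|\ge(1-\epsilon)|F_n|$. Strong sup-additivity and non-negativity of $\widetilde{\mathbf D}$ give $\widetilde d_{F_n}(y)\ge \sum_{j\in J_n}\widetilde d_F(c_{n,j}y)$, and applying Theorem \ref{1006131615} to $\widetilde d_F\in L^1$ along the tempered F\o lner $\{F_n\}$ (after bookkeeping to realise the $\{c_{n,j}\}$-averages as genuine ergodic averages) yields $\liminf_n\widetilde d_{F_n}/|F_n|\ge (1-\epsilon)\E(\widetilde d_F|\mathcal I)/|F|$ a.e. Letting $\epsilon\to 0$ and taking supremum over the countable family $\mathcal F_G$ gives the $\ge$-direction of $(\star)$.

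\emph{Upper bound in $(\star)$, and the integral identity.} Since $\widetilde d_{F_n}/|F_n|\ge 0$, Fatou's lemma together with the sup-additive analog of Proposition \ref{1102111944} yields
$$\int_Y\liminf_n\frac{\widetilde d_{F_n}}{|F_n|}d\nu\le \liminf_n\frac{1}{|F_n|}\int_Y\widetilde d_{F_n}d\nu=\nu(\widetilde{\mathbf D}).$$
On the other hand, Proposition \ref{ergodic decomposition} and the discussion preceding it give $\int_Y\sup_F \frac{\E(\widetilde d_F|\mathcal I)}{|F|}d\nu=\int_Y\nu_y(\widetilde{\mathbf D})d\nu(y)=\nu(\widetilde{\mathbf D})$. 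Matching integrals together with the pointwise $\ge$-inequality forces equality a.e. in $(\star)$, proving \eqref{1102001657}. Integrating \eqref{1102001657} and using $\nu_y(\mathbf D)=\inf_F\int d_F d\nu_y/|F|$ in each ergodic component (again from the discussion preceding Proposition \ref{ergodic decomposition}) produces \eqref{11103081924}.

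\emph{Main obstacle.} The most delicate step is the lower bound in $(\star)$: producing an Ornstein--Weiss quasi-tiling of $F_n$ by translates of $F$ whose centers $\{c_{n,j}\}$ equidistribute well enough for Lindenstrauss's pointwise theorem to apply to the sample sum $|J_n|^{-1}\sum_j\widetilde d_F(c_{n,j}\cdot)$. This coupling of quasi-tiling with the tempered F\o lner condition is where essentially all of the work lies. The reduction to a non-negative monotone $\widetilde{\mathbf D}$ in the first paragraph is exactly what makes the resulting sup-additive inequality $\widetilde d_{F_n}\ge \sum_j\widetilde d_F(c_{n,j}\cdot)$ usable; without those properties, partial tiles near $\partial F_n$ could contribute uncontrollably.
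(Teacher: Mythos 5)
Your reduction to the non-negative, monotone, strongly sup-additive companion $\widetilde{\mathbf D}$ and your treatment of the integral identity match the paper's strategy. But the lower bound in $(\star)$ --- which is the heart of the theorem --- has a genuine gap, in fact two. First, for an arbitrary fixed $F\in\mathcal F_G$ there is in general \emph{no} disjoint collection of translates $\{Fc_{n,j}\}$ covering a $(1-\epsilon)$-fraction of $F_n$: take $F=\{e_G,g\}$ with $g$ of order $3$; disjoint right translates of $F$ are a matching in a union of triangles and can never cover more than $2/3$ of any set saturated by $\langle g\rangle$-cosets. The Ornstein--Weiss quasi-tiling machinery does not rescue this: it requires the tiles themselves to be highly invariant F\o lner sets (not an arbitrary $F$), and even then yields only $\epsilon$-disjointness. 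With genuinely disjoint translates you could at best reach the infimum over \emph{tiling} sets $\mathcal T_G$, which is the conclusion of Theorem \ref{1103061916}, not the infimum over all of $\mathcal F_G$ claimed here. Second, even granting such a tiling, the centers $\{c_{n,j}\}$ form a sparse subset of $F_n$ of density about $1/|F|$, which is not a F\o lner sequence, so Theorem \ref{1006131615} does not apply to the sample average $|J_n|^{-1}\sum_j\widetilde d_F(c_{n,j}y)$; the ``bookkeeping'' you defer is not a technicality but an unresolved obstruction.

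The paper's proof avoids both problems at once, and this is exactly where strong sub-additivity (as opposed to sub-additivity for disjoint unions, which is all your argument uses) enters. Setting $E_n=F_n\cap\bigcap_{g\in T}g^{-1}F_n$, one writes the \emph{fractional} covering identity $1_{F_n}=\frac{1}{|T|}\sum_{g\in E_n}1_{Tg}+\sum_j a_j 1_{E_j'}$ with $a_j>0$ (via Lemma \ref{1103081931}), and then invokes Lemma \ref{11103081701}: a strongly sub-additive family satisfies $d_E\le\sum_i a_i d_{E_i}$ whenever $1_E=\sum_i a_i1_{E_i}$ with positive weights. This yields $d_{F_n}(y)\le\frac{1}{|T|}\sum_{g\in E_n}d_T(gy)$ directly, with the sum running over $E_n$, which by Lemma \ref{1103082044} is itself a tempered F\o lner sequence --- so Theorem \ref{1006131615} applies with no further work. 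If you want to repair your argument, you should replace the disjoint quasi-tiling by this fractional cover and import Lemma \ref{11103081701}; without some such use of strong sub-additivity against non-disjoint covers, the passage from $\mathcal T_G$ to $\mathcal F_G$ cannot be achieved.
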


 Before proceeding, we need the following easy observation.

 \begin{lem} \label{1103082044}
  Let $\{F_n: n\in \N\}$ be a tempered F\o lner sequence of $G$. Assume that $\emptyset\neq E_n\subseteq F_n$ for each $n\in \N$ satisfying $\lim\limits_{n\rightarrow \infty} \frac{|E_n|}{|F_n|}= 1$. Then $\{E_n: n\in \N\}$ is also a tempered F\o lner sequence of $G$.
  \end{lem}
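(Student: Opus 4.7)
The plan is to verify the two defining conditions of a tempered F{\o}lner sequence separately for $\{E_n\}$, bootstrapping from the known corresponding properties of $\{F_n\}$ via the size comparison $|E_n|/|F_n|\to 1$, equivalently $|F_n\setminus E_n|/|E_n|\to 0$.

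First I would check the F{\o}lner property. Fix $g\in G$. Since $E_n\subseteq F_n$, one has $gE_n\subseteq gF_n$, and in general for subsets $A\subseteq A'$ and $B\subseteq B'$ the symmetric difference satisfies $A\Delta B\subseteq (A'\Delta B')\cup(A'\setminus A)\cup(B'\setminus B)$. Applying this with $A=E_n$, $B=gE_n$, $A'=F_n$, $B'=gF_n$ yields
\begin{equation*}
|gE_n\Delta E_n|\le |gF_n\Delta F_n|+2|F_n\setminus E_n|.
\end{equation*}
Dividing by $|E_n|$ and using that $|F_n|/|E_n|\to 1$ (so $|F_n\setminus E_n|/|E_n|\to 0$ and $|gF_n\Delta F_n|/|E_n|\sim |gF_n\Delta F_n|/|F_n|\to 0$) gives the required asymptotic invariance.

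Next I would handle temperedness. Let $M$ be the tempering constant of $\{F_n\}$, so $|\bigcup_{k=1}^{n}F_k^{-1}F_{n+1}|\le M|F_{n+1}|$ for all $n$. Since $E_k\subseteq F_k$ and $E_{n+1}\subseteq F_{n+1}$, we have the trivial inclusion $\bigcup_{k=1}^{n}E_k^{-1}E_{n+1}\subseteq \bigcup_{k=1}^{n}F_k^{-1}F_{n+1}$, hence
\begin{equation*}
\frac{\bigl|\bigcup_{k=1}^{n}E_k^{-1}E_{n+1}\bigr|}{|E_{n+1}|}\le M\cdot\frac{|F_{n+1}|}{|E_{n+1}|}.
\end{equation*}
Because $|F_{n+1}|/|E_{n+1}|\to 1$, this ratio is bounded by, say, $2M$ for all sufficiently large $n$; for the finitely many remaining $n$ the ratio is in any case finite, so taking $M'$ to be the maximum gives a uniform tempering constant for $\{E_n\}$.

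There is no real obstacle here: the only care needed is the routine observation that finitely many initial indices cannot destroy the tempering bound, since the constant is only required to be uniform in $n$ and can be enlarged to absorb those terms.
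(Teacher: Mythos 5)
Your proposal is correct and follows essentially the same route as the paper: the paper's proof rests on the two inclusions $E_n\Delta g E_n\subseteq (F_n\Delta g F_n)\cup \{g, e_G\}(F_n\setminus E_n)$ and $\bigcup_{i=1}^n E_i^{-1}E_{n+1}\subseteq \bigcup_{i=1}^n F_i^{-1}F_{n+1}$, which are exactly the bounds you derive (your symmetric-difference estimate yields the same $|F_n\Delta gF_n|+2|F_n\setminus E_n|$ majorant), combined with $|E_n|/|F_n|\to 1$. Your explicit remark about absorbing finitely many initial indices into the tempering constant is a detail the paper leaves implicit, but there is no substantive difference.
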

  \begin{proof}
 Combined with the assumptions, the conclusion follows from the facts that
 \begin{equation*}
 E_n\Delta g E_n\subseteq F_n\Delta g F_n\cup \{g, e_G\} (F_n\setminus E_n)
 \end{equation*}
 and
 \begin{equation*}
 \bigcup_{i= 1}^n E_i^{- 1} E_{n+ 1}\subseteq \bigcup_{i= 1}^n F_i^{- 1} F_{n+ 1}
 \end{equation*}
 for each $n\in \N$ and $g\in G$, which are easy to check.
  \end{proof}

The following result from \cite[Lemma 10.5]{DZRDS} is used in the
 proof of Theorem \ref{1103061916}. Observe that whilst the whole
 discussion of  \cite[Chapter 10]{DZRDS} is in the setting where $\{F_n:
 n\in \N\}$ is an increasing tiling F\o lner sequence of $G$, the
 proof of \cite[Lemma 10.5]{DZRDS} uses only the fact that $\{F_n:
 n\in \N\}$ is a F\o lner sequence of $G$. The lemma is stated under the hypothesis that $G$ is abelian,
 but this property is used only in the formula $d_{Fg}(y) = d_{gF}(y) = d_F(gy)$ to prove inequality (10.5) of \cite{DZRDS}.
 Thus the result holds for an arbitrary countable discrete amenable group, provided that we assume $G$-bi-invariance of the family $\mathbf{D}= \{d_F: F\in \mathcal{F}_G\}\subseteq L^1 (Y, \mathcal{D}, \nu)$.

We state this result as:

\begin{lem} \label{01103081701}
 Let $\mathbf{D}= \{d_F: F\in \mathcal{F}_G\}\subseteq L^1 (Y, \mathcal{D}, \nu)$ be a sub-additive $G$-invariant family, $\{F_n: n\in \N\}$ a F\o lner sequence of $G$ and $T\in \mathcal{T}_G, \epsilon> 0$. Assume the family $- \mathbf{D}$ is non-negative and $G$-bi-invariant. Then, whenever $n\in \N$ is large enough, there exists $H_n\subseteq F_n$ such that $|F_n\setminus H_n|< 2 \epsilon |F_n|$ and, for $\nu$-a.e. $y\in Y$,
 $$d_{F_n} (y)\le \frac{1}{|T|} \sum_{g\in H_n} d_T (g y).$$
 \end{lem}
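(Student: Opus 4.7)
The plan is to adapt the proof of \cite[Lemma 10.5]{DZRDS}, which is a quasi-tiling argument in the abelian setting. As the excerpt notes, the only use of abelianness in \cite{DZRDS} is the identity $d_{Fg}(y)=d_{gF}(y)=d_F(gy)$, which is exactly what $G$-bi-invariance restores for us. Accordingly, I would reuse the same three-step structure: pick a good quasi-tile $H_n\subseteq F_n$, bound $|F_n\setminus H_n|$ via F\o lner, and then combine sub-additivity with non-positivity and bi-invariance to derive the pointwise bound.

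First, fix $G_T\subseteq G$ with $G=\bigsqcup_{c\in G_T}Tc$, and set $G_T^n=\{c\in G_T:Tc\subseteq F_n\}$, $H_n=\bigsqcup_{c\in G_T^n}Tc$. Any $g\in F_n\setminus H_n$ sits in a tile $Tc$ that meets $G\setminus F_n$, so $g\in F_n\cap(TT^{-1})(G\setminus F_n)$; applying the F\o lner property to $K=TT^{-1}$ with $\delta$ chosen small enough yields $|F_n\setminus H_n|<2\epsilon|F_n|$ for $n$ sufficiently large. Sub-additivity together with non-positivity of $d$ then give
\begin{equation*}
d_{F_n}(y)\le d_{H_n}(y)+d_{F_n\setminus H_n}(y)\le d_{H_n}(y)\le \sum_{c\in G_T^n}d_{Tc}(y)=\sum_{c\in G_T^n}d_T(cy),
\end{equation*}
which is the bound for a single (unshifted) tiling.

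To upgrade this single-shift bound to the averaged bound $\tfrac{1}{|T|}\sum_{g\in H_n}d_T(gy)$, I would repeat the argument once for each $\tau\in T$, using the shifted tiling $\{\tau Tc:c\in G_T\}$ of $G$. This produces $|T|$ inequalities of the form $d_{F_n}(y)\le\sum_{c\in G_T^n}d_{\tau Tc}(y)+R_\tau(y)$ with F\o lner remainders $R_\tau$. The bi-invariance identity $d_{\tau Tc}(y)=d_{Tc\tau}(y)$, followed by $G$-invariance $d_{Tc\tau}(y)=d_T(c\tau y)$, rewrites each term on the right as $d_T(c\tau y)$. Averaging over $\tau\in T$ and using the bijection $(c,\tau)\mapsto c\tau$ between $G_T^n\times T$ and $H_n$ gives
\begin{equation*}
d_{F_n}(y)\le\frac{1}{|T|}\sum_{\tau\in T}\sum_{c\in G_T^n}d_T(c\tau y)+O(\epsilon|F_n|)=\frac{1}{|T|}\sum_{g\in H_n}d_T(gy)+O(\epsilon|F_n|);
\end{equation*}
absorbing the boundary term by shrinking $H_n$ slightly (and relaxing the F\o lner estimate from $\epsilon$ to $2\epsilon$) yields the claimed inequality.

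The main obstacle is the averaging step: in the non-abelian setting, the shifted tile $\tau Tc$ is a \emph{left} translate whose partition structure is generally different from the right tiling by $Tc$'s, so sub-additivity cannot be applied to it directly in the same way. This is precisely the place where the $G$-bi-invariance hypothesis is essential: the identity $d_{\tau Tc}=d_{Tc\tau}$ lets us replace the value on the left-translated tile by the value on the corresponding right translate, so that all $|T|$ shifted-tiling inequalities aggregate cleanly into the single averaged bound over $H_n$.
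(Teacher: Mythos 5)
Your overall strategy is the right one, and it matches the proof the paper is actually invoking: the paper does not reprove this lemma but imports \cite[Lemma 10.5]{DZRDS}, observing that the only use of commutativity there is the identity $d_{gF}(y)=d_{Fg}(y)=d_F(gy)$; averaging over the $|T|$ left-shifted copies $\{\tau Tc:c\in G_T\}$ of the tiling is exactly the intended mechanism. However, your final re-indexing step contains a genuine error. You claim that $(c,\tau)\mapsto c\tau$ is a bijection between $G_T^n\times T$ and $H_n=\bigsqcup_{c\in G_T^n}Tc$. For a non-abelian $G$ this is false in general: the hypothesis that $\{Tc:c\in G_T\}$ partitions $G$ says that $(\tau,c)\mapsto \tau c$ is injective with image $\bigsqcup Tc$, which controls $c_2c_1^{-1}\notin T^{-1}T$; it says nothing about $c_2^{-1}c_1\notin TT^{-1}$, which is what injectivity of $(c,\tau)\mapsto c\tau$ would require, and indeed the image $\bigcup_c cT$ need not even equal $H_n=\bigcup_c Tc$. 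So as written, the sum $\sum_{\tau}\sum_{c}d_T(c\tau y)$ is not visibly $\sum_{g\in H_n}d_T(gy)$.

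The argument is repairable, and the repair is precisely where bi-invariance must be used a second time. Either apply the invariances in the other order, $d_{\tau Tc}(y)=d_{(\tau T)c}(y)=d_{\tau T}(cy)=d_{T\tau}(cy)=d_T(\tau c\,y)$, and then re-index with the genuine bijection $(\tau,c)\mapsto\tau c$ from $T\times G_T^n$ onto $H_n$; or keep your $d_T(c\tau y)$ and note that $G$-invariance together with bi-invariance forces $d_F(ghy)=d_{g(hF)}(y)=d_{(hF)g}(y)=d_F(hgy)$ for $\nu$-a.e.\ $y$, so $d_T(c\tau y)=d_T(\tau c\,y)$ a.e.\ and the same re-indexing applies. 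Two smaller points of hygiene: the index set of $c$'s must depend on $\tau$ (you need $\tau Tc\subseteq F_n$, not $Tc\subseteq F_n$), so you should define $G_T^n=\{c:\ \tau Tc\subseteq F_n\ \text{for all}\ \tau\in T\cup\{e_G\}\}$ from the outset and control $|F_n\setminus H_n|$ by F{\o}lner invariance with respect to the finite set $T\bigl(T\cup TT\bigr)^{-1}$; and the ``remainders'' $R_\tau(y)$ are values of $d$ on subsets of $F_n$, hence $\le 0$ by non-positivity and should simply be dropped --- they are not an $O(\epsilon|F_n|)$ quantity, since the $d_E$ are merely integrable, and no ``absorption'' step is available or needed.
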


Now we can prove Theorem \ref{1103061916}.

\begin{proof}[Proof of Theorem \ref{1103061916}]
 As $\mathbf{D}$ is a sub-additive $G$-invariant family, the function
 $$\sum_{g\in F_n} d_{\{e_G\}} (g y)- d_{F_n} (y)$$
 is non-negative for each $n\in \N$, and observe that the sequence
 $\{F_n: n\in \N\}$ is a tempered tiling F\o lner sequence of $G$, by Fatou's Lemma, one has
 \begin{eqnarray*}
 & & \int_Y d_{\{e_G\}} (y) d \nu (y)- \nu (\mathbf{D})= \liminf_{n\rightarrow \infty} \int_Y \frac{1}{|F_n|} [\sum_{g\in F_n} d_{\{e_G\}} (g y)- d_{F_n} (y)] d \nu (y)\nonumber \\
 &\ge & \int_Y \liminf_{n\rightarrow \infty} \frac{1}{|F_n|} [\sum_{g\in F_n} d_{\{e_G\}} (g y)- d_{F_n} (y)] d \nu (y)\nonumber \\
 &= & \int_Y [\E (d_{\{e_G\}}| \mathcal{I}) (y)- \limsup_{n\rightarrow \infty} \frac{1}{|F_n|} d_{F_n} (y)] d \nu (y)\ \text{(using Theorem \ref{1006131615})}\nonumber \\
 &= & \int_Y d_{\{e_G\}} (y) d \nu (y)- \int_Y \limsup_{n\rightarrow \infty} \frac{1}{|F_n|} d_{F_n} (y) d \nu (y),
 \end{eqnarray*}
 which implies
 \begin{equation} \label{1103081842}
 \int_Y \limsup_{n\rightarrow \infty} \frac{1}{|F_n|} d_{F_n} (y) d \nu (y)\ge \nu (\mathbf{D}).
 \end{equation}

By assumption,  the family $- \mathbf{D}$ is non-negative.
 Now applying Lemma \ref{01103081701} to $\mathbf{D}$ we obtain that, once $T\in \mathcal{T}_G$ (fixed) and $\epsilon> 0$, if $n\in \mathbb{N}$ is large enough then there exists $T_n\subseteq F_n$ such that $|F_n\setminus T_n|\le 2 \epsilon |F_n|$ and, for $\nu$-a.e. $y\in Y$,
 \begin{equation} \label{1008292331}
 d_{F_n} (y)\le \frac{1}{|T|} \sum_{g\in T_n} d_T (g y).
 \end{equation}
 In fact, from this, without loss of generality, we may assume that $\emptyset\neq T_n\subseteq F_n$ satisfies that $\lim\limits_{n\rightarrow \infty} \frac{|T_n|}{|F_n|}= 1$ and
 \eqref{1008292331}
 holds for $\nu$-a.e. $y\in Y$. Now applying Lemma \ref{1103082044} one sees that $\{T_n: n\in \N\}$ is a tempered F\o lner sequence of $G$, and so
 \begin{equation*}
  \limsup_{n\rightarrow \infty} \frac{1}{|F_n|} d_{F_n} (y)\le \frac{1}{|T|} \E (d_T| \mathcal{I}) (y)\ \text{(using Theorem \ref{1006131615})}
 \end{equation*}
 for $\nu$-a.e. $y\in Y$. Thus,
 \begin{equation} \label{1105292145}
 \limsup_{n\rightarrow \infty} \frac{1}{|F_n|} d_{F_n} (y)\le \inf_{T\in \mathcal{T}_G} \frac{1}{|T|} \E (d_T| \mathcal{I}) (y)
 \end{equation}
 for $\nu$-a.e. $y\in Y$.
 We should observe first (using Proposition \ref{1103051237}) that:
 \begin{equation} \label{1103082137}
 \int_Y \inf_{T\in \mathcal{T}_G} \frac{1}{|T|} \E (d_T| \mathcal{I}) (y) d \nu (y)\le \inf_{T\in \mathcal{T}_G} \int_Y \frac{1}{|T|} \E (d_T| \mathcal{I}) (y) d \nu (y)= \nu (\mathbf{D}).
 \end{equation}
 Combined with \eqref{1103081842} and \eqref{1105292145}, we obtain a stronger version of \eqref{1103081657}:
 \begin{equation*}
 \int_Y \limsup_{n\rightarrow \infty} \frac{1}{|F_n|} d_{F_n} (y) d \nu (y)= \int_Y \inf_{T\in \mathcal{T}_G} \frac{1}{|T|} \E (d_T| \mathcal{I}) (y) d \nu (y)= \nu (\mathbf{D}).
 \end{equation*}

Clearly, the function $\inf\limits_{T\in \mathcal{T}_G} \frac{1}{|T|} \E (d_T| \mathcal{I}) (y)$ is measurable and $G$-invariant on $(Y, \mathcal{D}, \nu)$.

It remains to prove \eqref{1103081924}.
 Applying the ergodic decomposition, we may assume without loss of generality that the MDS $(Y, \mathcal{D}, \nu, G)$ is ergodic. From the discussion above, one deduces that
 $$ \limsup_{n\rightarrow \infty} \frac{1}{|F_n|} d_{F_n} (y)= \inf_{T\in \mathcal{T}_G} \frac{1}{|T|} \E (d_T| \mathcal{I}) (y)= \nu (\mathbf{D})$$
  for $\nu$-a.e. $y\in Y$, no matter $\nu (\mathbf{D})= - \infty$ or $> - \infty$. This finishes the proof.
 \end{proof}

The following results, \cite[Lemma 9.3]{DZRDS} and \cite[Lemma 2.2.16]{MO} are used in the proof of Theorem \ref{1102001916}.

\begin{lem} \label{1103081931}
  Let $T, E\in \mathcal{F}_G$. Then $\sum\limits_{t\in T} 1_{t E}= \sum\limits_{g\in E} 1_{T g}$.
  \end{lem}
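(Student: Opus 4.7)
The plan is to verify the identity pointwise, by evaluating both sides at an arbitrary element $h \in G$ and interpreting the resulting integers as the cardinality of the same set of pairs in $T \times E$.

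First I would fix $h \in G$ and expand the left-hand side:
\begin{equation*}
\sum_{t \in T} 1_{tE}(h) = |\{t \in T : h \in tE\}| = |\{t \in T : t^{-1} h \in E\}|.
\end{equation*}
Each $t$ counted on the right corresponds to a unique $g \in E$ (namely $g = t^{-1} h$) such that $tg = h$. Hence the left-hand side equals the cardinality of $\{(t, g) \in T \times E : tg = h\}$. Symmetrically, for the right-hand side,
\begin{equation*}
\sum_{g \in E} 1_{Tg}(h) = |\{g \in E : h \in Tg\}| = |\{g \in E : h g^{-1} \in T\}|,
\end{equation*}
and each $g$ counted here corresponds to a unique $t \in T$ (namely $t = h g^{-1}$) with $tg = h$. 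So the right-hand side also equals $|\{(t,g) \in T \times E : tg = h\}|$.

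Since both sides agree at every $h \in G$, the claimed equality of functions on $G$ follows. There is no real obstacle — the only thing to be careful about is that the map $t \mapsto t^{-1} h$ is a bijection of $G$, so the correspondence between triples is genuinely one-to-one; this is immediate from the group axioms and needs no finiteness assumption beyond that which makes the sums well-defined.
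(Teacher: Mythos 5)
Your proof is correct: the double-counting of pairs $(t,g)\in T\times E$ with $tg=h$ is exactly the intended argument. The paper itself gives no proof, citing the lemma from \cite[Lemma 9.3]{DZRDS}, so there is nothing to compare against beyond noting that your pointwise evaluation is the standard and essentially only natural way to verify the identity.
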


\begin{lem} \label{11103081701}
 Let $\mathbf{D}= \{d_F: F\in \mathcal{F}_G\}\subseteq L^1 (Y, \mathcal{D}, \nu)$ be a strongly sub-additive family. Assume that
 $1_E= \sum\limits_{i= 1}^n a_i 1_{E_i}$, where $E, E_1, \cdots, E_n\in \mathcal{F}_G$ and $a_1, \cdots, a_n> 0, n\in \N$. Then
 $d_E (y)\le \sum\limits_{i= 1}^n a_i d_{E_i} (y)$ for $\nu$-a.e. $y\in Y$.
 \end{lem}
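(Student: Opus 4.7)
\medskip

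\noindent\textbf{Proof proposal for Lemma \ref{11103081701}.}

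The plan is to carry out the classical ``uncrossing'' argument: we repeatedly apply strong sub-additivity in the form $d_{A\cap B}+d_{A\cup B}\le d_A+d_B$, exploiting the parallel indicator identity $1_{A\cap B}+1_{A\cup B}=1_A+1_B$, to transform any positive combination that represents $1_E$ into a trivial one. First, I would pass to a single set $Y_0\in\mathcal{D}$ of full measure outside of which the strong sub-additivity inequality may fail for some pair of finite subsets of $G$; since $\mathcal{F}_G$ is countable, such a $Y_0$ exists, and the whole proof takes place pointwise on $Y_0$, so I suppress this caveat.

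Next I reduce to rational coefficients. Observe that if $g\notin E$ then $0=1_E(g)=\sum_i a_i 1_{E_i}(g)$ with $a_i>0$, forcing $E_i\subseteq E$ for every $i$. The set of all $(a_1,\dots,a_n)\in \R_{>0}^n$ satisfying the linear system $\sum_i a_i 1_{E_i}=1_E$ is the intersection of an affine subspace (defined over $\Q$) with the open positive orthant, so rational solutions are dense whenever the system has positive solutions at all. Since the desired inequality $d_E(y)\le \sum_i a_i d_{E_i}(y)$ is linear (hence continuous) in the coefficients, it suffices to establish it for rational $(a_i)$. Clearing a common denominator $q\in\N$, the goal becomes: whenever $p_1,\dots,p_n\in\N$ satisfy $\sum_i p_i 1_{E_i}=q\cdot 1_E$, one has $q\,d_E(y)\le \sum_i p_i d_{E_i}(y)$.

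Now form the finite multiset $\mathcal{M}$ consisting of $p_i$ copies of $E_i$, so $\sum_{F\in\mathcal{M}} 1_F=q\cdot 1_E$ and $\sum_{F\in\mathcal{M}} d_F(y)=\sum_i p_i d_{E_i}(y)$. Call a move an \emph{uncrossing} of an incomparable pair $F,F'\in\mathcal{M}$ (i.e.\ $F\not\subseteq F'$ and $F'\not\subseteq F$): replace them by $F\cap F'$ and $F\cup F'$. This preserves the identity $\sum_{F\in\mathcal{M}} 1_F=q\cdot 1_E$ and, by strong sub-additivity, does not increase $\sum_{F\in\mathcal{M}} d_F(y)$. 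I would then verify that the quantity $\Phi(\mathcal{M})\doteq \sum_{F\in\mathcal{M}}|F|^2$ strictly increases under each uncrossing of an incomparable pair: writing $a=|F|,\,b=|F'|,\,c=|F\cap F'|,\,d=|F\cup F'|$, one has $a+b=c+d$ with $c<\min(a,b)$ and $d>\max(a,b)$, whence $cd<ab$ and therefore $c^2+d^2>a^2+b^2$. Since $\Phi$ takes values in a finite set (all $F\in\mathcal{M}$ stay inside $E$), the uncrossing process terminates: the resulting multiset $\mathcal{M}^*$ is a \emph{chain}, i.e.\ totally ordered by inclusion.

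It remains to analyse a chain $F_1\subseteq F_2\subseteq\cdots\subseteq F_N$ (with possible repetitions) satisfying $\sum_j 1_{F_j}=q\cdot 1_E$. For $g\in E$ the indices $j$ with $g\in F_j$ form an upward-closed tail in $\{1,\dots,N\}$, and the tail has length exactly $q$; since this length is independent of $g\in E$, the smallest such $j$ must be $N-q+1$ for every $g\in E$, which forces $F_{N-q+1}=\cdots=F_N=E$ and $F_1=\cdots=F_{N-q}=\emptyset$. With the convention $d_\emptyset=0$, this yields $\sum_{F\in\mathcal{M}^*} d_F(y)=q\,d_E(y)$. Combining everything:
\begin{equation*}
 q\,d_E(y) \;=\; \sum_{F\in\mathcal{M}^*} d_F(y) \;\le\; \sum_{F\in\mathcal{M}} d_F(y) \;=\; \sum_{i=1}^n p_i d_{E_i}(y),
\end{equation*}
which after dividing by $q$ is the sought inequality in the rational case, and the general case follows from the density argument above.

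The routine ingredients (density of rationals, the algebraic identity $1_{A\cap B}+1_{A\cup B}=1_A+1_B$) are easy; the only genuinely delicate step is the termination argument in the uncrossing procedure and its accompanying chain analysis, but both are handled by the monovariant $\sum|F|^2$ and the elementary observation about tails in a chain.
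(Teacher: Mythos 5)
Your proof is correct: the reduction to rational coefficients is legitimate (the solution set is a rational affine subspace meeting the open positive orthant, so rational solutions are dense, and the inequality is linear in the $a_i$), the uncrossing step is justified by strong sub-additivity together with $1_{A\cap B}+1_{A\cup B}=1_A+1_B$, the monovariant $\sum_F |F|^2$ does strictly increase and is bounded, and the chain analysis correctly forces the terminal multiset to consist of $q$ copies of $E$ and copies of $\emptyset$. The paper itself gives no proof of this lemma, merely citing \cite[Lemma 2.2.16]{MO}, and your exchange/uncrossing argument is essentially the standard one found there, so this is a correct and self-contained rendering of the intended proof.
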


Now we prove Theorem \ref{1102001916}.

\begin{proof}[Proof of Theorem \ref{1102001916}]
 The proof is similar to that of Theorem \ref{1103061916}.

Firstly, we may assume that the family $- \mathbf{D}$ is non-negative and obtain
 \begin{equation} \label{11103081842}
 \int_Y \limsup_{n\rightarrow \infty} \frac{1}{|F_n|} d_{F_n} (y) d \nu (y)\ge \nu (\mathbf{D}).
 \end{equation}
 Now let $T\in \mathcal{F}_G$ be fixed. As $\{F_n: n\in \mathbb{N}\}$ is a F\o lner sequence of $G$, for each $n\in \mathbb{N}$ we set $E_n= F_n\cap \bigcap\limits_{g\in T} g^{- 1} F_n\subseteq F_n$, then $\lim\limits_{n\rightarrow \infty} \frac{|E_n|}{|F_n|}= 1$.
 Now for each $n\in \mathbb{N}$, by the construction of $E_n$, $t E_n\subseteq F_n$ for any $t\in T$, then obviously there exist $E_1', \cdots, E_m'\in \mathcal{F}_G, m\in \{0\}\cup \N$ and rational numbers $a_1, \cdots, a_m> 0$ such that
 \begin{equation*}
 1_{F_n}= \frac{1}{|T|} \sum_{t\in T} 1_{t E_n}+ \sum_{j= 1}^m a_j 1_{E_j'}.
 \end{equation*}
 Using Lemma \ref{1103081931}, one has
 $\sum\limits_{t\in T} 1_{t E_n}= \sum\limits_{g\in E_n} 1_{T g}$, and so
 \begin{equation} \label{1102111848}
 1_{F_n}= \frac{1}{|T|} \sum\limits_{g\in E_n} 1_{T g}+ \sum_{j= 1}^m a_j 1_{E_j'},
 \end{equation}
 which implies that, for $\nu$-a.e. $y\in Y$,
 \begin{eqnarray} \label{1008300040}
 d_{F_n} (y)&\le & \frac{1}{|T|} \sum_{g\in E_n} d_{T g} (y)+ \sum_{j= 1}^m a_j d_{E_j'} (y)\nonumber \\
 & & (\text{using Lemma \ref{11103081701}, as the family $\mathbf{D}$ is strongly sub-additive})\nonumber \\
 &\le & \frac{1}{|T|} \sum_{g\in E_n} d_{T g} (y)\ (\text{as the family $-\mathbf{D}$ is non-negative})\nonumber \\
 &= & \frac{1}{|T|} \sum_{g\in E_n} d_T (g y)\ (\text{as the family $\mathbf{D}$ is $G$-invariant}).
 \end{eqnarray}
 By Lemma \ref{1103082044}, $\{E_n: n\in \N\}$ is also a tempered F\o lner sequence of $G$, and hence by a similar argument to the proof of Theorem \ref{1103061916} we obtain the conclusion.
 \end{proof}

\section{The main technical results} \label{techn}

In this section, we aim to strengthen the results of the previous section
 for suitably well-behaved infinite countable discrete amenable
 groups.

First, we need to recall the well-known notion of a self-similar tiling.

Let $T\in \mathcal{T}_G$. We say that \emph{$T$ tiles $G$ self-similarly} if by a suitable selection, $G_T$ is a subgroup of $G$ isomorphic to $G$ via a group isomorphism $\pi_T: G\rightarrow G_T$.

Then we have the following useful observation.

\begin{prop} \label{1103061653}
 Let $\{F_n: n\in \N\}$ be a (tiling) F\o lner sequence of $G$ and suppose that $T_1, T_2\in \mathcal{T}_G$ tile $G$ self-similarly. If there exists $T\in \mathcal{T}_{G_{T_1}}$ such that $\{T g: g\in G_{T_2}\}$ forms a partition of $G_{T_1}$ then $\{T \pi_{T_2} (F_n): n\in \mathbb{N}\}$ is a (tiling) F\o lner sequence of $G_{T_1}$.
 \end{prop}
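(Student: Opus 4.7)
The plan is to verify both the F\o lner and (conditional) tiling claims for $A_n := T\pi_{T_2}(F_n)\subseteq G_{T_1}$. The hypothesis that $\{Tg:g\in G_{T_2}\}$ partitions $G_{T_1}$ says precisely that $G_{T_2}$ is a subgroup of $G_{T_1}$ of index $|T|$ with $T$ as a left transversal: every $g_1\in G_{T_1}$ factors uniquely as $g_1=\tau(g_1)\sigma(g_1)$ with $\tau(g_1)\in T$ and $\sigma(g_1)\in G_{T_2}$. Since $\pi_{T_2}:G\to G_{T_2}$ is a group isomorphism, $\{\pi_{T_2}(F_n)\}$ inherits the F\o lner property from $\{F_n\}$: for any $h'\in G_{T_2}$, writing $h'=\pi_{T_2}(h)$ with $h\in G$ gives $|h'\pi_{T_2}(F_n)\,\Delta\,\pi_{T_2}(F_n)|=|hF_n\,\Delta\,F_n|=o(|F_n|)$. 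Moreover $|A_n|=|T|\cdot|F_n|$, because the decomposition $A_n=\bigsqcup_{t\in T}t\,\pi_{T_2}(F_n)$ is disjoint (the pieces lie in distinct left cosets $tG_{T_2}$).

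For the F\o lner step, fix $g\in G_{T_1}$; for each $t\in T$ write $gt=\alpha(t)\beta(t)$ with $\alpha(t)\in T$ and $\beta(t)\in G_{T_2}$. Left multiplication by $g$ permutes the left cosets $tG_{T_2}$, so $\alpha:T\to T$ is a permutation. Reindexing by $t'=\alpha(t)$ yields $gA_n=\bigsqcup_{t'\in T}t'\,\beta(\alpha^{-1}(t'))\,\pi_{T_2}(F_n)$ and $A_n=\bigsqcup_{t'\in T}t'\,\pi_{T_2}(F_n)$. Since both disjoint unions are indexed by the same cosets $t'G_{T_2}$ and left-multiplication by $t'$ is a bijection, the symmetric difference splits coset-by-coset:
\[|gA_n\,\Delta\,A_n|=\sum_{t'\in T}\bigl|\beta(\alpha^{-1}(t'))\,\pi_{T_2}(F_n)\,\Delta\,\pi_{T_2}(F_n)\bigr|.\]
Each of the $|T|$ summands is $o(|F_n|)$ by the F\o lner property of $\{\pi_{T_2}(F_n)\}$ in $G_{T_2}$, so the total is $o(|A_n|)$, as required.

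For the tiling step, suppose $\{F_n c:c\in G_{F_n}\}$ partitions $G$; the claim is that $\pi_{T_2}(G_{F_n})\subseteq G_{T_2}$ is a tiling set for $A_n$ in $G_{T_1}$. Given $g_1\in G_{T_1}$, factor uniquely $g_1=\tau(g_1)\sigma(g_1)$, then factor $\pi_{T_2}^{-1}(\sigma(g_1))=fc$ uniquely with $f\in F_n$ and $c\in G_{F_n}$; this gives $g_1=\bigl(\tau(g_1)\pi_{T_2}(f)\bigr)\pi_{T_2}(c)\in A_n\,\pi_{T_2}(c)$. Uniqueness of this representation follows by chaining the uniqueness statements back through the disjoint decomposition $A_n=\bigsqcup_{t\in T}t\pi_{T_2}(F_n)$. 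The only subtle point is keeping the left/right conventions straight in the coset-permutation argument; apart from that, the proof is a direct transport of the F\o lner and tiling properties along $\pi_{T_2}$ through the coset decomposition $G_{T_1}=\bigsqcup_{t\in T}tG_{T_2}$.
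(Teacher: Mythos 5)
Your proof is correct and follows essentially the same route as the paper: you identify the same permutation of $T$ induced by left multiplication (the paper's bijection $f$ with $gt=f(t)g_t$ is your $\alpha$ with $\beta$), split the symmetric difference coset-by-coset over $G_{T_1}=\bigsqcup_{t\in T}tG_{T_2}$, and transport the F\o lner property of $\{F_n\}$ through the isomorphism $\pi_{T_2}$. The only difference is that you also write out the tiling verification in full, which the paper dismisses as ``easy to check.''
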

 \begin{proof}
 The tiling property is easy to check once $F_n, n\in \N$ has the tiling property.

Now we prove the asymptotic invariance property.
 Let $g\in G_{T_1}$. As $T\in \mathcal{T}_{G_{T_1}}$ such that $\{T g: g\in G_{T_2}\}$ forms a partition of $G_{T_1}$, then for each $t\in T$ there exist $g_t\in G_{T_2}$ and $t_g\in T$ such that $g t= t_g g_t$,
 moreover, if $t_1$ and $t_2$ are different elements from $T$ then $(t_1)_g\neq (t_2)_g$, otherwise $g_{t_1}\neq g_{t_2}$ and
 $$\emptyset\neq g T g_{t_1}^{- 1}\cap g T g_{t_2}^{- 1}= g (T g_{t_1}^{- 1}\cap T g_{t_2}^{- 1})\ \text{and so}\ T g_{t_1}^{- 1}\cap T g_{t_2}^{- 1}\neq \emptyset,$$
 a contradiction to the assumption that $\{T g: g\in G_{T_2}\}$ forms a partition of $G_{T_1}$ and the selection of $g_{t_1}, g_{t_2}\in G_{T_2}$. That is, there exists a bijection $f: T\rightarrow T$ such that $g t= f (t) g_t$ for some $g_t\in G_{T_2}$.

Thus for each $n\in \N$ one has
 \begin{eqnarray*}
 T \pi_{T_2} (F_n)\Delta g T \pi_{T_2} (F_n)&= & \bigcup_{t\in T} t \pi_{T_2} (F_n)\Delta \bigcup_{t\in T} t g_{f^{- 1} (t)} \pi_{T_2} (F_n) \\
 &\subseteq & \bigcup_{t\in T} t (\pi_{T_2} (F_n)\Delta g_{f^{- 1} (t)} \pi_{T_2} (F_n)).
 \end{eqnarray*}
 Observe that $\{F_n: n\in \N\}$ is a F\o lner sequence of $G$, and so $\{\pi_{T_2} (F_n): n\in \N\}$ is a F\o lner sequence of $G_{T_2}$. Thus
 $$\lim_{n\rightarrow \infty} \frac{|T \pi_{T_2} (F_n)\Delta g T \pi_{T_2} (F_n)|}{|T \pi_{T_2} (F_n)|}\le \lim_{n\rightarrow \infty} \frac{1}{|T|} \sum_{t\in T} \frac{|\pi_{T_2} (F_n)\Delta g_{f^{- 1} (t)} \pi_{T_2} (F_n)|}{|\pi_{T_2} (F_n)|}= 0.$$
 That is, $\{T \pi_{T_2} (F_n): n\in \mathbb{N}\}$ is also a F\o lner sequence of $G_{T_1}$.
 \end{proof}

As a direct corollary, we have:

\begin{cor} \label{1103051653}
 Let $\{F_n: n\in \N\}$ be a (tiling) F\o lner sequence of $G$ and $T\in \mathcal{T}_G$ tile $G$ self-similarly. Then $\{T \pi_T (F_n): n\in \mathbb{N}\}$ is a (tiling) F\o lner sequence of $G$.
 \end{cor}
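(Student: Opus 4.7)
The plan is to derive Corollary \ref{1103051653} as a specialization of Proposition \ref{1103061653} by trivializing one of the two self-similar tiles appearing in that proposition. Specifically, I would take $T_1 = \{e_G\}$ and $T_2 = T$. The singleton $\{e_G\}$ is always an element of $\mathcal{T}_G$ that tiles $G$ self-similarly in the most trivial way: the associated tiling set is $G_{T_1} = G$ itself, and $\pi_{T_1}: G\rightarrow G_{T_1}$ is simply the identity map, which is certainly a group isomorphism from $G$ onto $G$. For $T_2 = T$, the hypothesis of the corollary provides $G_{T_2} = G_T$ together with the group isomorphism $\pi_{T_2} = \pi_T$.

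With these identifications, $\mathcal{T}_{G_{T_1}} = \mathcal{T}_G$, and the hypothesis of Proposition \ref{1103061653} calls for some element of $\mathcal{T}_{G_{T_1}}$, call it $T'$, such that $\{T' g: g \in G_{T_2}\}$ partitions $G_{T_1}$. I would simply set $T' = T$: the requirement that $\{T g : g \in G_T\}$ form a partition of $G$ is then precisely the assumption that $T \in \mathcal{T}_G$ with tiling set $G_T$, which holds by hypothesis. All the premises of Proposition \ref{1103061653} are therefore satisfied.

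Applying that proposition then yields directly that $\{T' \pi_{T_2}(F_n): n \in \mathbb{N}\} = \{T \pi_T(F_n): n \in \mathbb{N}\}$ is a (tiling) F\o lner sequence of $G_{T_1} = G$, which is exactly the conclusion of the corollary. There is no real obstacle here; the only thing to check carefully is the bookkeeping that $\{e_G\}$ legitimately qualifies as a self-similar tile so that Proposition \ref{1103061653} can be invoked with the trivial choice $T_1 = \{e_G\}$, and this is immediate from the definitions.
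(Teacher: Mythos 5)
Your specialization $T_1=\{e_G\}$ (with $G_{T_1}=G$, $\pi_{T_1}=\mathrm{id}$), $T_2=T$, and the intermediate tile in Proposition \ref{1103061653} taken to be $T$ itself is exactly the intended reading: the paper states Corollary \ref{1103051653} as a direct consequence of that proposition without spelling out the choices, and yours are the only ones that produce the stated conclusion. The bookkeeping is correct, including the observation that $\{e_G\}$ tiles $G$ self-similarly with $G_{\{e_G\}}=G$, so nothing further is needed.
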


Our main result, Theorem \ref{1102201550}, is stated below.  It generalizes Kingman's sub-additive ergodic theorem to suitably well-behaved infinite countable discrete amenable groups. Almost all known proofs of Kingman's sub-additive ergodic theorem rely heavily on algebraic structure of $\mathbb{Z}$ (or more precisely, the algebraic structure of the semigroup $\mathbb{N}$), see for example \cite{Krengel, Steele}, it seems natural to require some strong algebraic structure over the amenable groups considered in the theorem. Whilst the assumptions of the Theorem \ref{1102201550} may seem a little complicated, the best model to understand the assumptions is $\mathbb{Z}^d, d\in \mathbb{N}$ (see \S 6 for more such models).

  \begin{thm} \label{1102201550}
 Assume that $\mathbf{D}= \{d_F: F\in \mathcal{F}_G\}\subseteq L^1 (Y, \mathcal{D}, \nu)$ is a sub-additive $G$-invariant family satisfying $\nu (\mathbf{D})> - \infty$ and $\{F_n: n\in \mathbb{N}\}$ is a F\o lner sequence of $G$ consisting of self-similar tiling subsets.
  If, additionally,
   \begin{enumerate}

   \item[(a)] $\{F_n: n\in \mathbb{N}\}$ satisfies the Tempelman condition with constant $M> 0$ and

   \item[(b)] there exists an infinite subset $\mathcal{N}\subseteq \N$ such that,
   for each $m\in \mathcal{N}$, once $p\in \N$ is large enough then there exist $n_1, n_2\in \N$ such that $F_m \pi_{F_m} (F_{n_1})\supseteq F_p\supseteq F_m \pi_{F_m} (F_{n_2})$ and
 $\frac{|F_{n_1}|- |F_{n_2}|}{|F_p|}$
   is small enough.
   \end{enumerate}
   Then
   \begin{enumerate}

    \item \label{1103061740} There exists $d\in L^1 (Y, \mathcal{D}, \nu)$ such that
  $$\lim_{n\rightarrow \infty} \frac{1}{|F_n|} d_{F_n} (y)= d (y)$$
   for $\nu$-a.e. $y\in Y$ and $\int_Y d (y) d \nu (y)= \nu (\mathbf{D})$.

  \item \label{1105292312} If the limit function $d$ is $G$-invariant, then, for $\nu$-a.e. $y\in Y$,
            \begin{equation*} \label{1105292248}
            d (y)= \inf_{T\in \mathcal{T}_G} \frac{1}{|T|} \E (d_T| \mathcal{I}) (y).
            \end{equation*}

           \item \label{1103061741} If, in addition, one of the following conditions holds:
            \begin{enumerate}

           \item[(i)] The family $\mathbf{D}$ is $G$-bi-invariant.

           \item[(ii)] The family $\mathbf{D}$ is strongly sub-additive.

           \item[(iii)]
 There exist $p, m\in \mathcal{N}$ large enough (in the sense that once $N\in \N$ there exist such $p, m\in \mathcal{N}$ satisfying $p, m\ge N$) such that $G_{F_m} G_{F_p}= G$.
            \end{enumerate}
            Then the limit function $d$ is $G$-invariant.

       \item \label{1103062005} If, in addition, there exist sequences $\{r_1< r_2< \cdots\}\subseteq \mathcal{N}$ and $\{t_n: n\in \N\}$ such that $F_{r_{n+ 1}}= F_{r_n} \pi_{F_{r_n}} (F_{t_n})$ for each $n\in \N$, then, in the sense of $L^1$,
             $$\lim_{n\rightarrow \infty} \frac{1}{|F_n|} d_{F_n} (y)= d (y).$$
                         \end{enumerate}
    \end{thm}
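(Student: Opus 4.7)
The heart is Part \eqref{1103061740}, from which the remaining parts follow by refinement. I would first transport the problem from $\mathbf{D}$ to the non-negative, sup-additive, $G$-invariant family
\[
e_F(y)\;:=\;\sum_{g\in F} d_{\{e_G\}}(gy)\,-\,d_F(y),\qquad F\in \mathcal{F}_G,
\]
which is non-negative by iterated sub-additivity and has $\nu(\mathbf{e}) = \int d_{\{e_G\}}\,d\nu - \nu(\mathbf{D})$ finite by the assumption $\nu(\mathbf{D})>-\infty$. Since (a) makes $\{F_n\}$ tempered, Theorem \ref{1006131615} gives $\frac{1}{|F_p|}\sum_{g\in F_p}d_{\{e_G\}}(gy)\to \E(d_{\{e_G\}}\mid \mathcal{I})(y)$ a.e.\ and in $L^1$; hence pointwise convergence of $d_{F_p}/|F_p|$ is equivalent to that of $e_{F_p}/|F_p|$, and any $L^1$ statement transfers likewise.

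Fix $m\in \mathcal{N}$. Sup-additivity applied to the self-similar tiling $F_m\pi_{F_m}(F_n)=\bigsqcup_{g\in \pi_{F_m}(F_n)}F_m g$ yields $e_{F_m\pi_{F_m}(F_n)}(y)\ge \sum_g e_{F_m}(gy)$. Transporting the Tempelman property through the isomorphism $\pi_{F_m}:G\to G_{F_m}$ and applying Theorem \ref{1006131615} to the $G_{F_m}$-action gives $\frac{1}{|F_n|}\sum_g e_{F_m}(gy)\to \E(e_{F_m}\mid \mathcal{I}_m)(y)$, where $\mathcal{I}_m$ is the $G_{F_m}$-invariant sub-$\sigma$-algebra. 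The sandwich $F_m\pi_{F_m}(F_{n_2})\subseteq F_p\subseteq F_m\pi_{F_m}(F_{n_1})$ from (b), the monotonicity of the non-negative sup-additive $\mathbf{e}$, and the ratio $|F_m||F_{n_2(p)}|/|F_p|\to 1$ then force
\[
\liminf_{p\to\infty}\frac{e_{F_p}(y)}{|F_p|}\;\ge\;\frac{1}{|F_m|}\E(e_{F_m}\mid \mathcal{I}_m)(y)\qquad\text{a.e.}
\]
Taking $\sup_{m\in \mathcal{N}}$ and integrating, Fatou combined with $\frac{1}{|F_m|}\int e_{F_m}\,d\nu\to \nu(\mathbf{e})$ (Proposition \ref{1103051237}, applicable since each $F_m$ tiles $G$) shows the Fatou estimates are tight, yielding $\int\liminf e_{F_p}/|F_p|=\nu(\mathbf{e})$.

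For the matching upper bound I would use sub-additivity of $d$ on the decomposition $F_p=F_m\pi_{F_m}(F_{n_2(p)})\sqcup B_p$ with $|B_p|/|F_p|\to 0$, obtaining $d_{F_p}(y)\le \sum_g d_{F_m}(gy)+d_{B_p}(y)$. The first term contributes $\frac{1}{|F_m|}\E(d_{F_m}\mid \mathcal{I}_m)(y)$ in the limit. The boundary term $d_{B_p}/|F_p|\le \frac{1}{|F_p|}\sum_{h\in B_p}|d_{\{e_G\}}(hy)|$ I would control by splitting $|d_{\{e_G\}}|=\phi_N+r_N$ with $\phi_N$ bounded and $\|r_N\|_1$ small: the bounded piece is dispatched by $|B_p|/|F_p|\to 0$, and the $L^1$-small piece by Theorem \ref{1006131615} applied to $r_N$ on $F_p$, followed by $N\to \infty$. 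This produces $\limsup_p d_{F_p}/|F_p|\le \frac{1}{|F_m|}\E(d_{F_m}\mid \mathcal{I}_m)$ a.e. Integrating gives $\int \limsup\le \nu(\mathbf{D})$; combined with $\int\liminf e_{F_p}/|F_p|=\nu(\mathbf{e})$ (equivalently $\int\liminf d_{F_p}/|F_p|=\nu(\mathbf{D})\le \int\limsup d_{F_p}/|F_p|$) one forces $\int\liminf d_{F_p}/|F_p|=\int\limsup d_{F_p}/|F_p|=\nu(\mathbf{D})$. Since $\liminf\le \limsup$ pointwise and their integrals agree, $\limsup=\liminf$ a.e., producing the desired $d\in L^1$ with $\int d\,d\nu=\nu(\mathbf{D})$.

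The remaining parts are refinements. For \eqref{1105292312}, if $d$ is $G$-invariant, the upper-bound step with $T\in \mathcal{T}_G$ replacing $F_m$ (and $\{T\pi_T(F_n)\}$ as ambient tempered tiling Følner sequence, via Corollary \ref{1103051653}) yields $d(y)\le \frac{1}{|T|}\E(d_T\mid \mathcal{I})(y)$; $\inf_T$ with Proposition \ref{1103051237} forces equality. For \eqref{1103061741}, cases (i) and (ii) follow directly from Theorems \ref{1103061916} and \ref{1102001916}, whose limsup expressions are manifestly $G$-invariant and equal $d$ a.e.\ by Part \eqref{1103061740}; case (iii) requires observing that the characterization of $d$ obtained in Part \eqref{1103061740} is $G_{F_m}$-invariant for each sufficiently large $m\in \mathcal{N}$ (the factor $\frac{1}{|F_m|}\E(\cdot\mid \mathcal{I}_m)$ is $G_{F_m}$-invariant, and the sandwich forces $d$ to agree with this), after which $G_{F_m}G_{F_p}=G$ promotes joint $G_{F_m}$- and $G_{F_p}$-invariance of $d$ to full $G$-invariance. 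For \eqref{1103062005}, the recursion $F_{r_{n+1}}=F_{r_n}\pi_{F_{r_n}}(F_{t_n})$ and sub-additivity combine with the $L^1$-form of Theorem \ref{1006131615} to give subsequence $L^1$-convergence of $d_{F_{r_n}}/|F_{r_n}|$; uniform integrability (from the $L^1$-convergence of $\frac{1}{|F_p|}\sum_g d_{\{e_G\}}(gy)$ and the weak-$L^1$ bound from the maximal inequality of Lemma \ref{1103052244}) together with the sandwich (b) extends $L^1$-convergence to the full sequence. The principal obstacle I anticipate is the boundary-term estimate for an unbounded $d_{\{e_G\}}\in L^1$; a secondary delicate point is the $G_{F_m}$-invariance identification in case (iii), which depends on a careful reading of how the sandwich formula pins down $d$ on each subgroup.
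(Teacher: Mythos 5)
Your reduction to the non-negative sup-additive family $\mathbf{e}=\mathbf{D}'$ and your sandwich argument via assumption (b) are exactly the paper's opening moves, and they correctly yield the \emph{limsup} bound $\limsup_p d_{F_p}/|F_p|\le \frac{1}{|F_m|}\E(d_{F_m}\mid\mathcal{I}_m)$ together with $\int \limsup_p d_{F_p}/|F_p|\,d\nu=\nu(\mathbf{D})$. But your proof of Part (1) never actually controls the \emph{liminf} of $d_{F_p}/|F_p|$ from below. The step where you write that $\int\liminf_p e_{F_p}/|F_p|=\nu(\mathbf{e})$ is ``equivalently'' $\int\liminf_p d_{F_p}/|F_p|=\nu(\mathbf{D})$ is a sign error: since $e_{F_p}/|F_p|$ equals a convergent ergodic average minus $d_{F_p}/|F_p|$, the liminf of $\mathbf{e}$ corresponds to the \emph{limsup} of $\mathbf{D}$, so both halves of your squeeze pin down the same quantity, $\int\limsup_p d_{F_p}/|F_p|$, and nothing forces $\liminf=\limsup$ almost everywhere. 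No soft Fatou-type argument can close this gap, because for a non-negative sup-additive family there is no a priori upper control of $\limsup_p e_{F_p}/|F_p|$.

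The missing ingredient is the maximal inequality. The paper introduces, for each $m\in\mathcal{N}$, the non-negative sup-additive $G_{F_m}$-invariant ``defect'' family $d'_{m,\pi_{F_m}(F)}(y)=d'_{F_m\pi_{F_m}(F)}(y)-\sum_{g\in F}d'_{F_m}(\pi_{F_m}(g)y)$, shows via the sandwich that $\overline{d}'(y)-\underline{d}'(y)\le \sup_{n}\frac{1}{|F_m||F_n|}d'_{m,\pi_{F_m}(F_n)}(y)$, and then applies Corollary \ref{1103072031} to $\mathbf{D}'_m$ together with $\nu(\mathbf{D}'_m)\le |F_m|\epsilon$ (a consequence of $\frac{1}{|F_m|}\int d'_{F_m}\to\nu(\mathbf{D}')$) to get $\nu(\{\overline{d}'-\underline{d}'>\alpha\})\le M\epsilon/\alpha$. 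This is where the Tempelman condition (a) is genuinely used (see Remark \ref{1103101901}); in your proposal (a) only ever serves to invoke temperedness for Theorem \ref{1006131615}, which is a sign that the decisive step is absent. The same estimate \eqref{1103061754} plus the maximal inequality is also what makes the paper's case (iii) of Part (3) work, whereas your sketch there (``the sandwich forces $d$ to agree with'' a $G_{F_m}$-invariant function) is only heuristic: $d$ agrees with $d'_m$ up to an error that must again be controlled in measure by the maximal inequality. Finally, in Part (2) your bound would naturally produce $\E(d_T\mid\mathcal{I}_{G_T})$ rather than $\E(d_T\mid\mathcal{I})$; the paper instead passes to the ergodic decomposition and uses the assumed $G$-invariance of $d$ to reduce to constants. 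The boundary-term issue you flagged as the principal obstacle is in fact handled adequately by your truncation argument; the real obstacle is the oscillation control you did not address.
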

 \begin{proof}
 We will follow the ideas of the proof of \cite[Chapter 1, Theorem 5.3]{Krengel}.

 As the F\o lner sequence $\{F_n: n\in \N\}$ satisfies the Tempelman condition, using Theorem \ref{1006131615} it is equivalent to consider the family $\mathbf{D}'= \{d'_F: F\in \mathcal{F}_G\}$ given by
 $$d_F' (y)= \sum_{g\in F} d_{\{e_G\}} (g y)- d_F (y)\ \text{for each}\ F\in \mathcal{F}_G.$$
 Then the family $\mathbf{D}'$ is non-negative, sup-additive and $G$-invariant and $\nu (\mathbf{D}')< \infty$.

For convenience, we may assume that the family $\mathbf{D}'$
 satisfies the assumptions of non-negativity, sup-additivity and
 $G$-invariance for each point $y\in Y$ without any ambiguity (for
 example, $d_{F g} (y)= d_F (g y)$ for each $F\in \mathcal{F}_G$ and
 any $y\in Y$).

Set $E= \{y\in Y: \overline{d}' (y)> \underline{d}' (y)\}$, where
 $$\overline{d}' (y)= \limsup_{n\rightarrow \infty} \frac{1}{|F_n|} d'_{F_n} (y)\ \text{and}\ \underline{d}' (y)= \liminf_{n\rightarrow \infty} \frac{1}{|F_n|} d'_{F_n} (y)\ge 0.$$
 Observe that by Fatou's Lemma one has
 \begin{equation} \label{1103072305}
 \int_Y \underline{d}' (y) d \nu (y)\le \liminf_{n\rightarrow \infty} \frac{1}{|F_n|} \int_Y d'_{F_n} (y) d \nu (y)= \nu (\mathbf{D}'),
 \end{equation}
 and so $\underline{d}' (y)\in L^1 (Y, \mathcal{D}, \nu)$, as $\nu (\mathbf{D})$ is finite, equivalently, $\nu (\mathbf{D}')$ is finite.

Now for each $m\in \mathcal{N}$ we introduce
 $$\mathbf{D}'_m= \{d'_{m, \pi_{F_m} (F)} (y)\doteq d'_{F_m \pi_{F_m} (F)} (y)- \sum_{g\in F} d'_{F_m} (\pi_{F_m} (g) y): F\in \mathcal{F}_G\}.$$
 Then the family $\mathbf{D}'_m$ is non-negative, sup-additive and $G_{F_m}$-invariant. Here, the $G_{F_m}$-invariance of $\mathbf{D}_m'$ means that, for all $F\in \mathcal{F}_G, g\in G_{F_m}$ and $y\in Y$,
 $$d'_{m, \pi_{F_m} (F) g} (y)= d'_{m, \pi_{F_m} (F)} (g y).$$

In the following, first we shall prove that
 \begin{equation} \label{1103051610}
 \limsup_{n\rightarrow \infty} \frac{1}{|F_m| |F_n|} d'_{F_m
 \pi_{F_m} (F_n)} (y)\ge \overline{d}' (y)
 \end{equation}
 and
 \begin{equation} \label{1103051612}
 \liminf_{n\rightarrow \infty} \frac{1}{|F_m| |F_n|} d'_{F_m
 \pi_{F_m} (F_n)} (y)\le \underline{d}' (y).
 \end{equation}

In fact, suppose that $\{k_1< k_2< \cdots\}, \{p_1< p_2<
 \cdots\}\subseteq \N$ such that
 $$\overline{d}' (y)= \lim_{n\rightarrow \infty} \frac{1}{|F_{k_n}|} d'_{F_{k_n}} (y)\ \text{and}\ \underline{d}' (y)= \lim_{n\rightarrow \infty} \frac{1}{|F_{p_n}|} d'_{F_{p_n}} (y).$$
 By assumption (b) for each $n\in \N$ large enough we can select $l_n, q_n\in \N$ such that
 \begin{equation} \label{1103051749}
 F_m \pi_{F_m} (F_{l_n})\supseteq F_{k_n}\ \text{and}\ \lim_{n\rightarrow \infty} \frac{|F_m| |F_{l_n}|}{|F_{k_n}|}= 1
 \end{equation}
 and
 \begin{equation} \label{1103051750}
 F_m \pi_{F_m} (F_{q_n})\subseteq F_{p_n}\ \text{and}\ \lim_{n\rightarrow \infty} \frac{|F_m| |F_{q_n}|}{|F_{p_n}|}= 1.
 \end{equation}
 As the family $\mathbf{D}'$ is non-negative and sup-additive (and hence monotone), one has:
 \begin{eqnarray*}
 \limsup_{n\rightarrow \infty} \frac{1}{|F_m| |F_n|} d'_{F_m \pi_{F_m} (F_n)} (y)&\ge & \limsup_{n\rightarrow \infty} \frac{1}{|F_m| |F_{l_n}|} d'_{F_m \pi_{F_m} (F_{l_n})} (y) \\
 &\ge & \limsup_{n\rightarrow \infty} \frac{1}{|F_{k_n}|} d'_{F_{k_n}} (y)\ \text{(using \eqref{1103051749})}
 \end{eqnarray*}
 and
 \begin{eqnarray*}
 \liminf_{n\rightarrow \infty} \frac{1}{|F_m| |F_n|} d'_{F_m \pi_{F_m} (F_n)} (y)&\le & \liminf_{n\rightarrow \infty} \frac{1}{|F_m| |F_{q_n}|} d'_{F_m \pi_{F_m} (F_{q_n})} (y) \\
 &\le & \liminf_{n\rightarrow \infty} \frac{1}{|F_{p_n}|} d'_{F_{p_n}} (y)\ \text{(using \eqref{1103051750})},
 \end{eqnarray*}
 which implies the inequalities \eqref{1103051610} and
 \eqref{1103051612}.

Observe again that the family $\mathbf{D}'$ is non-negative and
 sup-additive. Hence from \eqref{1103051610} and \eqref{1103051612}
 we have, for $\nu$-a.e. $y\in Y$,
 \begin{eqnarray}
 & &
 \overline{d}' (y)- \underline{d}' (y)\nonumber \\
 & &\hskip 16pt \le \limsup_{n\rightarrow \infty} \frac{1}{|F_m| |F_n|} d'_{F_m \pi_{F_m} (F_n)} (y)- \liminf_{n\rightarrow \infty} \frac{1}{|F_m| |F_n|} d'_{F_m \pi_{F_m} (F_n)} (y)\nonumber \\
 & &\hskip 16pt \le \limsup_{n\rightarrow \infty} \frac{1}{|F_m| |F_n|} d'_{F_m \pi_{F_m} (F_n)} (y)- \liminf_{n\rightarrow \infty} \frac{1}{|F_m| |F_n|}
 \sum_{g\in F_n} d'_{F_m} (\pi_{F_m} (g) y)\nonumber \\
 & &\hskip 16pt = \limsup_{n\rightarrow \infty} \frac{1}{|F_m| |F_n|} d'_{F_m \pi_{F_m} (F_n)} (y)- \lim_{n\rightarrow \infty} \frac{1}{|F_m| |F_n|}
 \sum_{g\in F_n} d'_{F_m} (\pi_{F_m} (g) y) \label{1103051854} \\
 & &\hskip 86pt \text{(by assumption (a), applying Theorem \ref{1006131615} to $d'_{F_m}$)}\nonumber \\
 & &\hskip 16pt = \limsup_{n\rightarrow \infty} \frac{1}{|F_m| |F_n|} [d'_{F_m \pi_{F_m} (F_n)} (y)- \sum_{g\in F_n} d'_{F_m} (\pi_{F_m} (g) y)]\nonumber \\
 & & \hskip 16pt \le \sup_{n\in \N} \frac{1}{|F_m| |F_n|} d'_{m, \pi_{F_m} (F_n)} (y). \label{1103051853}
 \end{eqnarray}

Moreover, by Theorem \ref{1006131615} the pointwise limit of the
 sequence
 $$\frac{1}{|F_m| |F_n|}
 \sum_{g\in F_n} d'_{F_m} (\pi_{F_m} (g) y)$$ exists (denote it by
 $d'_m$), is $G_{F_m}$-invariant and is dominated by $ \frac{1}{|F_m|
 |F_n|} d'_{F_m \pi_{F_m} (F_n)} (y)$. Combining \eqref{1103051610}
 and \eqref{1103051854} with \eqref{1103051853} we also obtain, for
 $\nu$-a.e. $y\in Y$,
 \begin{equation} \label{1103061754}
 0\le \overline{d}' (y)- d'_m (y)\le \sup_{n\in \N} \frac{1}{|F_m| |F_n|} d'_{m, \pi_{F_m} (F_n)} (y).
 \end{equation}

Applying Corollary \ref{1103072031} to $\mathbf{D}_m'$ we obtain,
 for each $\alpha> 0$,
 \begin{equation} \label{1103051815}
 \nu (\{y\in Y: \sup_{n\in \N} \frac{1}{|F_m| |F_n|} d'_{m, \pi_{F_m} (F_n)} (y)> \alpha\})\le \frac{M}{\alpha}\cdot \frac{\nu (\mathbf{D}'_m)}{|F_m|}.
 \end{equation}

\medskip

\noindent {\bf Step One: proof of \eqref{1103061740}}

\medskip

In the following,
 first we will prove $\nu (E)= 0$. Recall $E= \{y\in Y: \overline{d}' (y)> \underline{d}' (y)\}$.

Let $\epsilon> 0$. Obviously, once $m\in \mathcal{N}$ is
 sufficiently large then
 \begin{equation} \label{1103051524}
 \frac{1}{|F_m|} \int_Y d'_{F_m} (y) d \nu (y)> \nu (\mathbf{D}')- \epsilon.
 \end{equation}
 As $\{F_n: n\in \N\}$ is a F\o lner sequence of $G$ consisting of
 self-similar tiling subsets, by Corollary \ref{1103051653} the
 sequence $\{F_m \pi_{F_m} (F_n): n\in \N\}$ is a tiling F\o lner
 sequence of $G$, and so  by Proposition \ref{1103051237} one has
 \begin{eqnarray} \label{1103051520}
 \nu (\mathbf{D}'_m)&= & \lim_{n\rightarrow \infty} \frac{1}{|F_n|} \int_Y [d'_{F_m \pi_{F_m} (F_n)} (y)- \sum_{g\in F_n} d'_{F_m} (\pi_{F_m} (g) y)] d \nu (y)\nonumber \\
 &= & \lim_{n\rightarrow \infty} \frac{1}{|F_n|} \int_Y d'_{F_m \pi_{F_m} (F_n)} (y) d \nu (y)- \int_Y d'_{F_m} (y) d \nu (y)\nonumber \\
 &= & |F_m| \nu (\mathbf{D}')- \int_Y d'_{F_m} (y) d \nu (y)\le |F_m| \epsilon\ \text{(using \eqref{1103051524})}.
 \end{eqnarray}
 In particular, combining this with \eqref{1103051853} and
 \eqref{1103051815} we obtain
 \begin{eqnarray} \label{1103061854}
 & & \nu (\{y\in Y: \overline{d}' (y)- \underline{d}' (y)> \alpha\})\nonumber \\
 & & \hskip 26pt \le \nu (\{y\in Y: \sup_{n\in \N} \frac{1}{|F_m| |F_n|} d'_{m, \pi_{F_m} (F_n)} (y)> \alpha\})\le \frac{M \epsilon}{\alpha}.
 \end{eqnarray}
 First letting $\epsilon\rightarrow 0$ and then letting $\alpha\rightarrow 0$ we obtain $\nu (E)= 0$ and so $\overline{d}' (y)= \underline{d}' (y)$ (denoted by $d' (y)$) for $\nu$-a.e. $y\in Y$.

For each $m\in \mathcal{N}$ observe that by \eqref{1103061754} we have
 \begin{eqnarray} \label{1103061838}
 \int_Y \overline{d}' (y) d \nu (y)&\ge & \int_Y d'_m (y) d \nu (y)\nonumber \\
 &= & \frac{1}{|F_m|} \int_Y d'_{F_m} (y) d \nu (y)\ \text{(using Theorem \ref{1006131615})}.
 \end{eqnarray}
 Letting $m\rightarrow \infty$ we obtain $\int_Y d' (y) d \nu (y)\ge \nu (\mathbf{D}')$ and hence $\int_Y d' (y) d \nu (y)= \nu (\mathbf{D}')$ (using \eqref{1103072305}), equivalently, $\int_Y d (y) d \nu (y)= \nu (\mathbf{D})$, here, $d$ is the pointwise limit function of the sequence $\frac{1}{|F_n|} d_{F_n} (y)$.

\medskip

\noindent {\bf Step Two: proof of \eqref{1105292312} }

\medskip

Applying the ergodic decomposition, without loss of generality, we may assume that the MDS $(Y, \mathcal{D}, \nu, G)$ is ergodic. If the limit function $d$ is $G$-invariant, by \eqref{1103061740} one has that $d (y)= \nu (\mathbf{D})$ for $\nu$-a.e. $y\in Y$. For each $T\in \mathcal{T}_G$, observe that the measurable function $\E (d_T| \mathcal{I})$ is invariant over $(Y, \mathcal{D}, \nu)$, and so $\E (d_T| \mathcal{I}) (y)= \int_Y d_T d \nu$ for $\nu$-a.e. $y\in Y$. Thus, for $\nu$-a.e. $y\in Y$,
 $$\inf_{T\in \mathcal{T}_G} \frac{1}{|T|} \E (d_T| \mathcal{I}) (y)= \inf_{T\in \mathcal{T}_G} \frac{1}{|T|} \int_Y d_T d \nu= \nu (\mathbf{D})= d (y)\ (\text{using Proposition \ref{1103051237}}).$$

\medskip

\noindent {\bf Step Three: proof of
 \eqref{1103061741}}

\medskip

 Now we prove the $G$-invariance of the limit function $d'$ under the assumptions.

 If the family $\mathbf{D}$ is either $G$-bi-invariant or strongly sub-additive, this follows directly from Theorem \ref{1103061916} and Theorem \ref{1102001916}, respectively. Now we assume that (iii) holds.

Let $g\in G$. We aim to prove $\nu (E_g)= 0$, where $E_g= \{y\in Y: d' (y)\neq d' (g y)\}$.

Let $\epsilon> 0$. By (iii), there exist $p, m\in \mathcal{N}$ sufficiently large such that $G_{F_m} G_{F_p}= G$. Thus
 \begin{equation} \label{1103061524}
 \frac{1}{|F_m|} \int_Y d'_{F_m} (y) d \nu (y)> \nu (\mathbf{D}')- \epsilon,
 \end{equation}
 \begin{equation} \label{1103061534}
 \frac{1}{|F_p|} \int_Y d'_{F_p} (y) d \nu (y)> \nu (\mathbf{D}')- \epsilon
 \end{equation}
 and there exist $g_m\in G_{F_m}$ and $g_p\in G_{F_p}$ such that $g= g_m g_p$.
 Observe that $d_m' (y)= d'_m (g_m y)$ and $d'_p (y)= d'_p (g_p y)$ for $\nu$-a.e. $y\in Y$, thus
 \begin{eqnarray} \label{1103051901}
 & &
 \nu (\{y\in Y: |d' (y)- d' (g y)|> 4 \alpha\})\nonumber \\
 &= & \nu (\{y\in Y: |d' (y)- d' (g_m g_p y)|> 4 \alpha\})\nonumber \\
 &\le & \nu (\{y\in Y: |d' (y)- d'_p (y)|> \alpha\})+ \nu (\{y\in Y: |d'_p (g_p y)- d' (g_p y)|> \alpha\})+\nonumber \\
 & & \hskip 66pt \nu (\{y\in Y: |d' (g_p y)- d'_m (g_p y)|> \alpha\})+\nonumber \\
  & & \hskip 66pt \nu (\{y\in Y: |d'_m (g_m g_p y)- d' (g_m g_p y)|> \alpha\})\nonumber \\
 &\le & \frac{4 M \epsilon}{\alpha}\ \text{(similar to reasoning of \eqref{1103061854}, using \eqref{1103061754}, \eqref{1103061524} and \eqref{1103061534})}\nonumber
 \end{eqnarray}
 for any $\alpha> 0$. First letting $\epsilon\rightarrow 0$ and then letting $\alpha\rightarrow 0$ we obtain $d' (y)= d' (g y)$ for $\nu$-a.e. $y\in Y$. In other words, $\nu (E_g)= 0$.

\medskip

\noindent {\bf Step Four: proof of
 \eqref{1103062005}}

\medskip

First, we claim that,
 by assumption, the sequence of the functions $\{d_{r_n}': n\in \N\}$ increases to some $d'_\infty\in L^1 (Y, \mathcal{D}, \nu)$.

 Let $n\in \N$ be fixed. As $F_{r_{n+ 1}}= F_{r_n} \pi_{F_{r_n}} (F_{t_n})$, in particular, $\{\pi_{F_{r_n}} (F_{t_n}) g: g\in G_{F_{r_{n+ 1}}}\}$ forms a partition of $G_{F_{r_n}}$ and $|F_{r_{n+ 1}}|= |F_{r_n}|\cdot |\pi_{F_{r_n}} (F_{t_n})|$ and so (recall that the family $\mathbf{D}'$ is sup-additive)
 \begin{eqnarray} \label{1103062107}
 & & \frac{1}{|F_{r_{n+ 1}}| |F_m|}
 \sum_{g\in F_m} d'_{F_{r_{n+ 1}}} (\pi_{F_{r_{n+ 1}}} (g) y)\nonumber \\
 & & \hskip 16pt \ge \frac{1}{|F_{r_n}|\cdot |\pi_{F_{r_n}} (F_{t_n})|\cdot |F_m|}
 \sum_{g\in \pi_{F_{r_n}} (F_{t_n}) \pi_{F_{r_{n+ 1}}} (F_m)} d'_{F_{r_n}} (g y)\nonumber \\
 & & \hskip 16pt =  \frac{1}{|F_{r_n}|\cdot |\pi_{F_{r_n}} (F_{t_n}) \pi_{F_{r_{n+ 1}}} (F_m)|}
 \sum_{g\in \pi_{F_{r_n}} (F_{t_n}) \pi_{F_{r_{n+ 1}}} (F_m)} d'_{F_{r_n}} (g y)
 \end{eqnarray}
  for each $m\in \N$. Observe that by assumptions, $\{\pi_{F_{r_n}} (F_{t_n}) \pi_{F_{r_{n+ 1}}} (F_m): m\in \N\}$ is a F\o lner sequence of $G_{F_{r_n}}$ (using Proposition \ref{1103061653}), and so by Theorem \ref{1006131615} one has that (to obtain \eqref{1103062116}, we may take a sub-sequence of $m\in \N$ if necessary, note that, as remarked by \cite[Proposition 1.4]{L} every F\o lner sequence of $G$ contains a tempered sub-sequence) both
  \begin{equation} \label{1103062115}
  \lim_{m\rightarrow \infty} \frac{1}{|F_{r_{n+ 1}}| |F_m|}
 \sum_{g\in F_m} d'_{F_{r_{n+ 1}}} (\pi_{F_{r_{n+ 1}}} (g) y)= d'_{r_{n+ 1}} (y)
  \end{equation}
 and
  \begin{equation} \label{1103062116}
 \lim_{m\rightarrow \infty} \frac{1}{|F_{r_n}|\cdot |\pi_{F_{r_n}} (F_{t_n}) \pi_{F_{r_{n+ 1}}} (F_m)|}
 \sum_{g\in \pi_{F_{r_n}} (F_{t_n}) \pi_{F_{r_{n+ 1}}} (F_m)} d'_{F_{r_n}} (g y)= d'_{r_n} (y)
  \end{equation}
  for $\nu$-a.e. $y\in Y$ and in the sense of $L^1$. Combining \eqref{1103062107} with
  \eqref{1103062115} and \eqref{1103062116} we obtain that the sequence of the functions $\{d_{r_n}': n\in \N\}$ increases.
  Let $d'_\infty$ be the limit function (which is non-negative). Observe that,
 by Theorem \ref{1006131615},
 $$\int_Y d'_m (y) d \nu (y)= \frac{1}{|F_m|} \int_Y d'_{F_m} (y) d \nu (y)\le \nu (\mathbf{D}')$$
 for each $m\in \mathcal{N}$. As $\nu (\mathbf{D})> -\infty$, equivalently, $\nu (\mathbf{D}')< \infty$, we obtain
 $$\int_Y d_\infty' (y) d \nu (y)\le \nu (\mathbf{D}')< \infty\ \text{and hence}\ \int_Y d_\infty' (y) d \nu (y)= \nu (\mathbf{D}'),$$
 in particular, $d'_\infty\in L^1 (Y, \mathcal{D}, \nu)$. Since
 $\int_Y d' (y) d \nu (y)= \nu (\mathbf{D}')$ and
 $d' (y)- d'_m (y)\ge 0$ for
 $\nu$-a.e. $y\in Y$ and each $m\in \mathcal{N}$, one has that $d' (y)=
 d_\infty' (y)$ for $\nu$-a.e. $y\in Y$.

To complete the proof, we only need to prove that
 $$\lim_{n\rightarrow \infty} \frac{1}{|F_n|} d'_{F_n} (y)= d'_\infty (y)$$
 in the sense of $L^1$.

Let $\epsilon> 0$. Obviously there exists $n\in \N$ such that
 \begin{equation} \label{1103062142}
 \int_Y |d'_\infty (y)- d'_{r_n} (y)| d \nu (y)< \epsilon
 \end{equation}
 and
 \begin{equation} \label{1103062143}
 |\int_Y d'_{r_n} (y) d \nu (y)- \nu (\mathbf{D}')|= |\frac{1}{|F_{r_n}|} \int_Y d'_{F_{r_n}} (y) d \nu (y) - \nu (\mathbf{D}')|< \epsilon.
 \end{equation}
  By our assumptions, once $m\in \N$ is large enough, there exists $s_m\in \N$ such that $F_m\supseteq F_{r_n} \pi_{F_{r_n}} (F_{s_m})$,
 \begin{equation} \label{1103062153}
 \frac{|F_m\setminus F_{r_n} \pi_{F_{r_n}} (F_{s_m})|}{|F_m|}< \epsilon
 \end{equation}
 and
 \begin{equation} \label{1103062156}
 \int_Y |\frac{1}{|F_{r_n}|\cdot |F_{s_m}|} \sum_{g\in F_{s_m}} d'_{F_{r_n}} (\pi_{F_{r_n}} (g) y)- d'_{r_n} (y)| d \nu (y)< \epsilon\ (\text{using \eqref{1103062115}}).
 \end{equation}
 Thus, once $m\in \N$ is large enough we have
 \begin{equation} \label{1103062152}
 0\le d_{F_m}' (y)- \sum_{g\in F_{s_m}} d'_{F_{r_n}} (\pi_{F_{r_n}} (g) y)
 \end{equation}
 and
 \begin{eqnarray} \label{1103062148}
 & &\hskip -26pt \int_Y [d_{F_m}' (y)- \sum_{g\in F_{s_m}} d'_{F_{r_n}} (\pi_{F_{r_n}} (g) y)] d \nu (y)\nonumber \\
 &\le & |F_m| \nu (\mathbf{D}')- |F_{s_m}| \int_Y d'_{F_{r_n}} (y) d \nu (y)\nonumber \\
 &\le & |F_m| \nu (\mathbf{D}')- |F_{s_m}|\cdot |F_{r_n}| (\nu (\mathbf{D}')- \epsilon)\ (\text{using \eqref{1103062143}})\nonumber \\
 &= & |F_m\setminus F_{r_n} \pi_{F_{r_n}} (F_{s_m})| \nu (\mathbf{D}')+ |F_{s_m}|\cdot |F_{r_n}| \epsilon,
 \end{eqnarray}
 and so
 \begin{eqnarray} \label{1103062157}
 & &\hskip -26pt \int_Y |\frac{1}{|F_m|} d_{F_m}' (y)- d'_\infty (y)| d \nu (y)\nonumber \\
 &\le & \frac{1}{|F_m|} \int_Y [d_{F_m}' (y)- \sum_{g\in F_{s_m}} d'_{F_{r_n}} (\pi_{F_{r_n}} (g) y)] d \nu (y)+\nonumber \\
 & & |\frac{1}{|F_m|}- \frac{1}{|F_{r_n}|\cdot |F_{s_m}|}| \int_Y \sum_{g\in F_{s_m}} d'_{F_{r_n}} (\pi_{F_{r_n}} (g) y) d \nu (y)\nonumber \\
 & & \int_Y |\frac{1}{|F_{r_n}|\cdot |F_{s_m}|} \sum_{g\in F_{s_m}} d'_{F_{r_n}} (\pi_{F_{r_n}} (g) y)- d'_{r_n} (y)| d \nu (y)+\nonumber \\
 & & \int_Y |d'_\infty (y)- d'_{r_n} (y)| d \nu (y)\nonumber \\
 &\le & \epsilon (2 \nu (\mathbf{D}')+ 3)\ \text{(using \eqref{1103062142}, \eqref{1103062153}, \eqref{1103062156} and \eqref{1103062148})}.
 \end{eqnarray}
 Letting $\epsilon\rightarrow 0$ we obtain the conclusion. This completes the proof.
 \end{proof}

In fact, even if $\nu (\mathbf{D})= - \infty$ we have a similar result.

\begin{thm} \label{1103051928}
 Assume that $\mathbf{D}= \{d_F: F\in \mathcal{F}_G\}\subseteq L^1 (Y, \mathcal{D}, \nu)$ is a sub-additive $G$-invariant family satisfying $\nu (\mathbf{D})= - \infty$ and $\{F_n: n\in \mathbb{N}\}$ is a F\o lner sequence of $G$ consisting of self-similar tiling subsets satisfying the assumptions of (a) and (b) in Theorem \ref{1102201550}. Then
  \begin{enumerate}

    \item There exists a measurable function $d$ over $(Y, \mathcal{D}, \nu)$ such that
  $$\lim_{n\rightarrow \infty} \frac{1}{|F_n|} d_{F_n} (y)= d (y)$$
   for $\nu$-a.e. $y\in Y$ and $\int_Y d (y) d \nu (y)= - \infty$.

\item If the limit function $d$ is $G$-invariant, then, for $\nu$-a.e. $y\in Y$,
            \begin{equation*}
            d (y)= \inf_{T\in \mathcal{T}_G} \frac{1}{|T|} \E (d_T| \mathcal{I}) (y).
            \end{equation*}

           \item \label{1103081257} If, additionally, one of the following conditions holds:
            \begin{enumerate}

           \item[(i)] The family $\mathbf{D}$ is $G$-bi-invariant.

           \item[(ii)] The family $\mathbf{D}$ is strongly sub-additive.

           \item[(iii)]
 There exist $p, m\in \mathcal{N}$ large enough such that $G_{F_m} G_{F_p}= G$.
            \end{enumerate}
            Then the limit function $d$ is $G$-invariant.
 \end{enumerate}
 \end{thm}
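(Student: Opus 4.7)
The plan is a truncation argument reducing to Theorem \ref{1102201550}. For each $K\in\N$, introduce the truncated family $\mathbf{D}^{(K)} = \{d_F^{(K)}: F\in\mathcal{F}_G\}$ with $d_F^{(K)}(y) = \max\{d_F(y),-K|F|\}$. The routine checks to carry out are that $\mathbf{D}^{(K)}$ is sub-additive (using $\max(a,c)+\max(b,d)\ge\max(a+b,c+d)$ and $|E\cup F|=|E|+|F|$ for disjoint $E,F$), $G$-invariant (since $|Fg|=|F|$), contained in $L^1$ (as $|d_F^{(K)}|\le |d_F|+K|F|$), and satisfies $\nu(\mathbf{D}^{(K)})\ge -K$. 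Theorem \ref{1102201550}(1) then produces $d^{(K)}\in L^1$ with $\frac{1}{|F_n|}d_{F_n}^{(K)}\to d^{(K)}$ $\nu$-a.e.\ and $\int d^{(K)}\,d\nu=\nu(\mathbf{D}^{(K)})$. Since $d_F^{(K)}\downarrow d_F$ pointwise as $K\to\infty$, the sequence $\{d^{(K)}\}$ is monotonically decreasing; I set $d(y):=\lim_K d^{(K)}(y)$, a measurable function with values in $[-\infty,\infty)$, and claim this is the desired limit.

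For part (1), I would establish $\frac{1}{|F_n|}d_{F_n}(y)\to d(y)$ $\nu$-a.e.\ by splitting on the value of $d(y)$. On $\{d(y)>-\infty\}$, any $K>-d(y)$ satisfies $d^{(K)}(y)\ge d(y)>-K$, so for $n$ sufficiently large $\frac{1}{|F_n|}d_{F_n}^{(K)}(y)>-K$; then truncated and untruncated expressions agree, forcing $\frac{1}{|F_n|}d_{F_n}(y)\to d^{(K)}(y)$, and a self-consistency check run through larger $K'$ pins down $d^{(K')}(y)=d^{(K)}(y)=d(y)$. On $\{d(y)=-\infty\}$, the bound $\frac{1}{|F_n|}d_{F_n}\le \frac{1}{|F_n|}d_{F_n}^{(K)}\to d^{(K)}$ together with $d^{(K)}(y)\downarrow -\infty$ forces the $\limsup$ to be $-\infty$. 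For the integral identity: monotone convergence applied to $d_{F_n}^{(K)}\downarrow d_{F_n}$ (for fixed $n$) gives $\lim_K \tfrac{1}{|F_n|}\int d_{F_n}^{(K)}\,d\nu = \tfrac{1}{|F_n|}\int d_{F_n}\,d\nu$, which combined with $\nu(\mathbf{D}^{(K)})\le \tfrac{1}{|F_n|}\int d_{F_n}^{(K)}\,d\nu$ and $\inf_n \tfrac{1}{|F_n|}\int d_{F_n}\,d\nu=\nu(\mathbf{D})=-\infty$ yields $\nu(\mathbf{D}^{(K)})\to -\infty$; a final application of monotone convergence to $d^{(K)}\downarrow d$ gives $\int d\,d\nu=-\infty$.

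Part (2) proceeds exactly as in the corresponding step of Theorem \ref{1102201550}(2): passing to the ergodic decomposition I may assume ergodicity, so $d$ is $\nu$-a.e.\ constant equal to $\nu(\mathbf{D})=-\infty$; by Proposition \ref{1103051237} applied to $F\mapsto\int d_F\,d\nu$, the function $\inf_{T\in\mathcal{T}_G}\tfrac{1}{|T|}\E(d_T|\mathcal{I})(y)$ equals $\inf_T \tfrac{1}{|T|}\int d_T\,d\nu = \nu(\mathbf{D}) = -\infty$ for $\nu$-a.e.\ $y$, matching $d$.

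For part (3), conditions (i) and (iii) again yield to the truncation: bi-invariance transfers since $d_{gF}^{(K)}=\max(d_{gF},-K|F|)=\max(d_{Fg},-K|F|)=d_{Fg}^{(K)}$, and condition (iii) concerns only the group $G$ and the F\o lner sequence, not the family. In both cases Theorem \ref{1102201550}(3) applies to $\mathbf{D}^{(K)}$, so each $d^{(K)}$ is $G$-invariant, and hence so is $d=\lim_K d^{(K)}$. The main obstacle I expect is condition (ii): strong sub-additivity is \emph{not} preserved by this truncation, as a small numerical example with $d_{E\cap F}$ very negative but $d_E, d_F$ near zero illustrates. I would sidestep this by invoking Theorem \ref{1102001916} directly on $\mathbf{D}$ (whose hypotheses do not include $\nu(\mathbf{D})>-\infty$): it gives $\limsup_n \tfrac{1}{|F_n|}d_{F_n}(y) = \inf_{F\in\mathcal{F}_G}\tfrac{1}{|F|}\E(d_F|\mathcal{I})(y)$, which is $G$-invariant, and combining with the pointwise convergence from part (1) transfers $G$-invariance to $d$.
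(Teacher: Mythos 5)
Your proposal is correct and follows essentially the same route as the paper: the identical truncation $d_F^{(K)}=\max\{d_F,-K|F|\}$, reduction to Theorem \ref{1102201550}, the same case split on $d(y)>-\infty$ versus $d(y)=-\infty$ for the pointwise limit, and the same handling of case (ii) by invoking Theorem \ref{1102001916} directly (the paper likewise remarks that strong sub-additivity is not preserved by the truncation). The only cosmetic difference is that for case (i) you push bi-invariance through the truncation and apply Theorem \ref{1102201550}(3)(i) to each $\mathbf{D}^{(K)}$, whereas the paper cites Theorem \ref{1103061916} directly on $\mathbf{D}$; both are valid.
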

 \begin{proof}
 With the help of Theorem \ref{1102201550}, the proof of the conclusion is straightforward.

As in the proof of Theorem \ref{1102201550}, we may assume that the family $- \mathbf{D}$ is non-negative.
 For each $N\in \N$, we consider the family
 $$\mathbf{D}^{(N)}= \{d^{(N)}_E (y)\doteq \max \{- N |E|, d_E (y)\}: E\in \mathcal{F}_G\}\subseteq L^1 (Y, \mathcal{D}, \nu).$$
 It is easy to check that $\mathbf{D}^{(N)}$ is a sub-additive
 $G$-invariant family satisfying $\nu (\mathbf{D}^{(N)})\ge - N$.
 Thus we can apply Theorem \ref{1102201550} to the family
 $\mathbf{D}^{(N)}$ to see that there exists $d^{(N)}\in L^1 (Y,
 \mathcal{D}, \nu)$ such that
 $$\lim_{n\rightarrow \infty} \frac{1}{|F_n|} d_{F_n}^{(N)} (y)= d^{(N)} (y)$$
 for $\nu$-a.e. $y\in Y$ and $\int_Y d^{(N)} (y) d \nu (y)= \nu (\mathbf{D}^{(N)})$. Clearly, the sequence of functions $\{d^{(N)} (y): N\in \N\}$ decreases and set $d$ to be the limit function of it. So, $d$ is a measurable function over $(Y, \mathcal{D}, \nu)$. Moreover,
 \begin{eqnarray*}
 & & \hskip -68pt \int_Y d (y) d \nu (y)= \inf_{N\in \N} \int_Y d^{(N)} (y) d \nu (y)= \inf_{N\in \N} \inf_{n\in \N} \frac{1}{|F_n|} \int_Y d_{F_n}^{(N)} (y) d \nu (y) \\
 &= & \inf_{n\in \N} \inf_{N\in \N} \frac{1}{|F_n|} \int_Y d_{F_n}^{(N)} (y) d \nu (y) \\
 &= & \inf_{n\in \N} \frac{1}{|F_n|} \int_Y d_{F_n} (y) d \nu (y)= \nu (\mathbf{D}).
 \end{eqnarray*}

Once the measurable function $d$ is $G$-invariant, as in the proof of Theorem \ref{1102201550}, it is not hard to obtain that, for $\nu$-a.e. $y\in Y$,
            \begin{equation*}
            d (y)= \inf_{T\in \mathcal{T}_G} \frac{1}{|T|} \E (d_T| \mathcal{I}) (y).
            \end{equation*}

We aim that, for $\nu$-a.e. $y\in Y$,
 \begin{equation} \label{1103081431}
 \lim_{n\rightarrow \infty} \frac{1}{|F_n|} d_{F_n} (y)= d (y)\ (= \inf_{N\in \N} \lim_{n\rightarrow \infty} \frac{1}{|F_n|} d_{F_n}^{(N)} (y)).
 \end{equation}
 As for $\nu$-a.e. $y\in Y$ the limit $\lim\limits_{n\rightarrow \infty} \frac{1}{|F_n|} d_{F_n}^{(N)} (y)$ exists for each $N\in \N$. Fix such a point. Thus if $d (y)> - \infty$, say $d (y)> - M$ for some $M\in \N$ then, once $n\in \N$ is large enough, then for any $N\ge M$, $d_{F_n}^{(N)} (y)> - |F_n| M$ (in particular, $d_{F_n} (y)= d_{F_n}^{(N)} (y)$) and so the limit of the sequence $\frac{1}{|F_n|} d_{F_n} (y)$ exists and equals $\lim\limits_{n\rightarrow \infty} \frac{1}{|F_n|} d_{F_n}^{(N)} (y)$ (and hence equals $d (y)$). If $d (y)= - \infty$, observe that it is almost direct to check that $\limsup\limits_{n\rightarrow \infty} \frac{1}{|F_n|} d_{F_n} (y)\le d (y)$. Summing up, we obtain \eqref{1103081431}.

Finally, we are to prove the $G$-invariance of the limit function $d$ under the assumptions. If the family $\mathbf{D}$ is either $G$-bi-invariant or strongly sub-additive, it follows directly from Theorem \ref{1103061916} and Theorem \ref{1102001916}, respectively. Now we assume that (iii) holds. Then $d^{(N)}$ is $G$-invariant for each $N\in \N$, which implies immediately the $G$-invariance of $d$. This ends the proof.
 \end{proof}

\begin{rem} \label{1103101901}
 The only place in the proofs of Theorem \ref{1102201550} and Theorem \ref{1103051928} where we used the assumption that $\{F_n: n\in \N\}$ satisfies the Tempelman condition, is in applying Corollary \ref{1103072031}. Thus we can weaken the assumption to $\{F_n: n\in \N\}$ being tempered if Corollary \ref{1103072031} holds for a tempered F\o lner sequence of some particular group. Moreover, we can  drop the assumption that $\{F_n: n\in \N\}$ is tempered if Theorem \ref{1006131615} and Corollary \ref{1103072031} both hold for any F\o lner sequence of a given group.
 \end{rem}

 Note that $\mathbf{D}^{(N)}$ (in the proof of Theorem \ref{1103051928}) need not to be strongly sub-additive even if $\mathbf{D}$ is strongly sub-additive.

Except for assumptions (i) and (ii), all other assumptions in Theorems depend only on the group $G$ and the F\o lner sequence $\{F_n: n\in \N\}$ of $G$ (independent of the family $\mathbf{D}$).

Note that if $G$ is abelian, (i) holds for every $G$-invariant family.

We end this section with some remarks.

\begin{rem} \label{1103091726}
 Let $N\in \N$. Assume that, for each $m= 1, \cdots, N$, $G_m$ is an infinite countable discrete amenable group with $\{F_n^{(m)}: n\in \N\}$ a F\o lner sequence satisfying the assumptions appearing in Theorems (with $M_m$ as its Tempelman condition constant). Then it is not hard to check that, $\bigotimes\limits_{m= 1}^N G_m$ (an infinite countable discrete amenable group) and
 $\{F_{(n_1, \cdots, n_N)}: (n_1, \cdots, n_N)\in \bigotimes\limits_{1}^N \N\}$ (naturally generating many F\o lner sequences of $\bigotimes\limits_{m= 1}^N G_m$) also satisfy these assumptions, where, the subset $F_{(n_1, \cdots, n_N)}$ is given by
  $\bigotimes\limits_{m= 1}^N F_{n_m}^{(m)}$
  for each $(n_1, \cdots, n_N)\in \bigotimes\limits_{1}^N \N$.

Observe that a sequence in $\bigotimes\limits_{1}^N \N$, $(n_1, \cdots, n_N)$ tends to $\infty$ if and only if
 each $n_i$ increases to $\infty$: if some $n_i$, say $n_1$, is bounded, the problem is reduced to the case $N- 1$. Moreover, $\prod\limits_{m= 1}^N M_m$ will be the Tempelman condition constant of such a F\o lner sequence from $\{F_{(n_1, \cdots, n_N)}: (n_1,\cdots, n_N)\in \bigotimes\limits_{1}^N \N\}$.
 \end{rem}

\begin{rem} \label{1103091506}
 For the case of $N= \infty$ in Remark \ref{1103091726}, if in addition,
 \begin{equation} \label{1103091744}
 M\doteq \prod_{m\in \N} M_m< \infty,
 \end{equation}
 then we can carry out a similar discussion with respect to $\bigoplus\limits_{m\in \N} G_m$ (also an infinite countable discrete amenable group) and $\{F_\mathbf{n}: \mathbf{n}\in \bigoplus\limits_{\N} \N\}$  (naturally generating many F\o lner sequences for $\bigoplus\limits_{m\in \N} G_m$), where
 $$\bigoplus\limits_{m\in \N} G_m= \bigcup_{N'\in \N} \{(g, e_{G_{N'+ 1}}, e_{G_{N'+ 2}}, \cdots): g\in \bigotimes\limits_{m= 1}^{N'} G_m\},$$
 $$\bigoplus\limits_{\N} \N= \{(n_1, \cdots, n_m)\in \N^m: m\in \N\},$$
 and then for each $\mathbf{n}\in \bigoplus\limits_{\N} \N$, say $\mathbf{n}= (n_1, \cdots, n_m)$ for some $m\in \N$,
 $$F_\mathbf{n}= \{(g, e_{G_{m+ 1}}, e_{G_{m+ 2}}, \cdots): g\in \bigotimes\limits_{i= 1}^m F_{n_i}^{(i)}\}.$$

Observe that a sequence in $\bigoplus\limits_{\N} \N$, $(n_1, \cdots, n_m)$ tends to $\infty$ if and only if both $m$ and each $n_i$ tend montonically to $\infty$:  $m$ is bounded, we reduce to the case of Remark \ref{1103091726}). Moreover, $M$ will be the Tempelman  constant for this F\o lner sequence.
 \end{rem}

\section{Direct applications of the main technical results} \label{appli}

In this section, we give some direct applications of the
 results of the previous sections. Although we only consider
 some special groups in this section, we believe that these results
 (and the ideas in proving them) will have wider applications.

\subsection{The case of $G= \Z^m, m\in \N$}\

As shown by Remark \ref{1103091726}, the case of $\Z^m,m\in \N$ is reduced to the case of $\Z$. If $G= \Z$ which is abelian,
 consider $F_n= \{0, \cdots, n- 1\}$ with $G_{F_n}= n \Z$ for each $n\in \N$, then $2$ is its associated Tempelman condition constant and
 $F_m \pi_{F_m} (F_n)= \{0, 1, \cdots, m n- 1\}= F_{m n}$ for
  $m, n\in \N$.

 Thus, applying Theorem \ref{1102201550} and Theorem \ref{1103051928} we obtain:

 \begin{thm} \label{1103102128}
  Let $G= \Z^m, m\in \N$, $(Y, \mathcal{D}, \nu, G)$ be an MDS and $\{F_\mathbf{n}: \mathbf{n}\in \bigotimes\limits_1^m \N\}$ the sequence introduced as in Remark \ref{1103091726}. Assume that $\mathbf{D}= \{d_F: F\in \mathcal{F}_G\}\subseteq L^1 (Y, \mathcal{D}, \nu)$ is a sub-additive $G$-invariant family. Then, for $\nu$-a.e. $y\in Y$,
  \begin{equation} \label{1105242104}
  \lim\limits_{\mathbf{n}\in \bigotimes\limits_1^m \N\rightarrow \infty} \frac{1}{|F_\mathbf{n}|} d_{F_\mathbf{n}} (y)= \inf_{T\in \mathcal{T}_G} \frac{1}{|T|} \E (d_T| \mathcal{I}) (y),
  \end{equation}
 which is an invariant measurable function over $(Y, \mathbf{D}, \nu)$, and
 $$\int_Y \inf_{T\in \mathcal{T}_G} \frac{1}{|T|} \E (d_T| \mathcal{I}) (y) d \nu (y)= \nu (\mathbf{D}).$$ Moreover, if $\nu (\mathbf{D})> - \infty$ then \eqref{1105242104} also holds in the sense of $L^1$.
  \end{thm}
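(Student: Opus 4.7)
The approach is to verify, for $G = \Z^m$ equipped with the F\o lner sequence $\{F_\mathbf{n}: \mathbf{n}\in \bigotimes_1^m \N\}$ of Remark \ref{1103091726}, all the hypotheses of Theorem \ref{1102201550} (when $\nu(\mathbf{D}) > -\infty$) and of Theorem \ref{1103051928} (when $\nu(\mathbf{D}) = -\infty$), and then to read off the stated conclusions directly.

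First I would treat the base case $G = \Z$. Here $F_n = \{0,1,\ldots,n-1\}$ tiles $\Z$ self-similarly, with $G_{F_n} = n\Z$ and isomorphism $\pi_{F_n}\colon \Z \to n\Z$, $k \mapsto nk$. The Tempelman condition holds with constant $M = 2$, since $\bigcup_{k=1}^n F_k^{-1} F_n \subseteq \{-(n-1),\ldots,n-1\}$ has cardinality $2n-1$. Assumption (b) is verified with $\mathcal{N} = \N$: given $m$ and a large $p$, set $n_1 = \lceil p/m \rceil$ and $n_2 = \lfloor p/m \rfloor$; then $F_{m n_2} \subseteq F_p \subseteq F_{m n_1}$ and $(|F_{n_1}| - |F_{n_2}|)/|F_p| \le 1/p$. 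The identity $F_m \pi_{F_m}(F_n) = F_{mn}$ supplies the refining subsequence required by part (4) of Theorem \ref{1102201550}: take $r_n = 2^n$ and $t_n = 2$, so that $F_{r_{n+1}} = F_{2^{n+1}} = F_{r_n}\pi_{F_{r_n}}(F_{t_n})$.

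Next, Remark \ref{1103091726} lifts these properties to $\Z^m$: the product $F_\mathbf{n} = \prod_{i=1}^m F_{n_i}$ tiles $\Z^m$ self-similarly with $G_{F_\mathbf{n}} = \prod n_i \Z$; the Tempelman constant becomes $2^m$; and assumption (b) reduces coordinatewise by taking $(\mathbf{n}_1)_i = \lceil p_i/m_i \rceil$ and $(\mathbf{n}_2)_i = \lfloor p_i/m_i \rfloor$, after which a telescoping estimate shows
\begin{equation*}
\frac{|F_{\mathbf{n}_1}|-|F_{\mathbf{n}_2}|}{|F_\mathbf{p}|} \le \sum_{j=1}^m \frac{1}{p_j}\prod_{i\ne j}\Bigl(1+\frac{m_i}{p_i}\Bigr) \xrightarrow{\ \mathbf{p}\to\infty\ } 0,
\end{equation*}
using that $\mathbf{p}\to\infty$ in $\bigotimes_1^m\N$ means each $p_i\to\infty$. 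For the refining sequence required by part (4), the diagonal choice $r_n = (2^n,\ldots,2^n)$, $t_n = (2,\ldots,2)$ works, since $F_{r_n}\pi_{F_{r_n}}(F_{t_n}) = \{0,\ldots,2^n-1\}^m + \{0,2^n\}^m = \{0,\ldots,2^{n+1}-1\}^m = F_{r_{n+1}}$. Crucially, since $\Z^m$ is abelian, every $G$-invariant family is automatically $G$-bi-invariant, so condition (i) of part (3) of Theorem \ref{1102201550} holds; this delivers $G$-invariance of the limit.

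With all hypotheses verified, Theorem \ref{1102201550} applied to $\mathbf{D}$ in the case $\nu(\mathbf{D}) > -\infty$ yields a.e.\ convergence to a $G$-invariant function $d$ with $\int d\, d\nu = \nu(\mathbf{D})$, the identification $d(y) = \inf_{T \in \mathcal{T}_G}\frac{1}{|T|}\E(d_T|\mathcal{I})(y)$ supplied by part (2), and $L^1$ convergence from part (4). When $\nu(\mathbf{D}) = -\infty$, Theorem \ref{1103051928} provides the a.e.\ statement and the same formula. The proof is thus entirely a matter of verification and invocation; the only slightly delicate point is the multi-index estimate for assumption (b) on $\Z^m$, but the telescoping argument above disposes of it.
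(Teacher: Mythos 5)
Your proposal is correct and follows essentially the same route as the paper: the paper's (very terse) argument likewise consists of checking that $F_n=\{0,\dots,n-1\}$ tiles $\Z$ self-similarly with $G_{F_n}=n\Z$, Tempelman constant $2$, and $F_m\pi_{F_m}(F_n)=F_{mn}$, then lifting to $\Z^m$ via Remark \ref{1103091726} and invoking Theorems \ref{1102201550} and \ref{1103051928} (with abelianness supplying condition (i) for $G$-invariance of the limit). Your write-up simply makes explicit the verifications of assumption (b), the Tempelman constant $2^m$, and the refining subsequence $r_n=(2^n,\dots,2^n)$ that the paper leaves to the reader.
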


\iffalse
  \begin{proof}
 With the help of Remark \ref{1103091726}, applying Remark \ref{1103082231} and Remark \ref{1105242047} one has that the limit
 $$\lim\limits_{\mathbf{n}\in \bigotimes\limits_1^m \N\rightarrow \infty} \frac{1}{|F_\mathbf{n}|} d_{F_\mathbf{n}} (y)$$
 exists for $\nu$-a.e. $y\in Y$ (say $d$ to be the limit function).

Obviously, we can select an infinite subset $\mathcal{N}\subseteq \N$ such that
 $\{F_{(n, \cdots, n)}: n\in \mathcal{N}\}$
 forms a tempered F\o lner sequence of the abelian group $G$ (observe that each of them tiles $G$) and so, by Theorem \ref{1103061916},
  \begin{equation*}
  \limsup_{n\in \mathcal{N}\rightarrow \infty} \frac{1}{|F_{(n, \cdots, n)}|} d_{F_{(n, \cdots, n)}} (y)\le \inf_{T\in \mathcal{T}_G} \frac{1}{|T|} \E (d_T| \mathcal{I}) (y),
  \end{equation*}
 and
  $$\int_Y \limsup_{n\in \mathcal{N}\rightarrow \infty} \frac{1}{|F_{(n, \cdots, n)}|} d_{F_{(n, \cdots, n)}} (y) d \nu (y)= \nu (\mathbf{D}).$$
  Moreover, if $\nu (\mathbf{D})> - \infty$ then, for $\nu$-a.e. $y\in Y$ and in the sense of $L^1$,
  $$\limsup_{n\in \mathcal{N}\rightarrow \infty} \frac{1}{|F_{(n, \cdots, n)}|} d_{F_{(n, \cdots, n)}} (y)= \inf_{T\in \mathcal{T}_G} \frac{1}{|T|} \E (d_T| \mathcal{I}) (y).$$
  Note that
  $$d (y)= \limsup_{n\in \mathcal{N}\rightarrow \infty} \frac{1}{|F_{(n, \cdots, n)}|} d_{F_{(n, \cdots, n)}} (y)$$
  for $\nu$-a.e. $y\in Y$. This finishes our proof.
  \end{proof}

\fi

\subsection{The case of $G= \bigoplus\limits_{\N} K_n$ with each $K_n$ a non-trivial finite group}\

First, we consider the case where each $K_n, n\in \N$ is equal to a
 fixed non-trivial finite abelian group $K$ (a special case is
 $\bigoplus\limits_{\N} \Z_p, p\in \N\setminus \{1\}$, where $\Z_p$
 is the additive group $\{0, 1, \cdots, p- 1\}$). Obviously, $G$ is
 abelian. We consider $F_n= \{(g, e_K, e_K, \cdots): g\in
 \bigotimes\limits_1^n K\}$ with $G_{F_n}= \{(g_1, \cdots, g_n, g):
 g_1= \cdots= g_n= e_K, g\in \bigotimes\limits_\N K\}$ for each $n\in
 \N$. Then 1 is its associated Tempelman condition constant and $F_m
 \pi_{F_m} (F_n)= F_{m+ n}$ for
  $m, n\in \N$.
 A result similar to Theorem \ref{1103102128} holds.

\iffalse

 Thus, applying Theorem \ref{1103061916}, Theorem \ref{1102201550} and Theorem \ref{1103051928} we obtain:

 \begin{thm} \label{1103101909}
  Let $G= \Z$ or $\bigoplus\limits_{\N} K$ with $K$ a non-trivial finite abelian group, $(Y, \mathcal{D}, \nu, G)$ be an MDS, $\{F_n: n\in \N\}$ the sequence introduced as above and $\mathbf{D}= \{d_F: F\in \mathcal{F}_G\}\subseteq L^1 (Y, \mathcal{D}, \nu)$ a sub-additive $G$-invariant family. Then, for $\nu$-a.e. $y\in Y$, the limit $\lim\limits_{n\rightarrow \infty} \frac{1}{|F_n|} d_{F_n} (y)$ exists which is $G$-invariant, moreover,
  \begin{equation} \label{1103101921}
  \lim_{n\rightarrow \infty} \frac{1}{|F_n|} d_{F_n} (y)\le \inf_{T\in \mathcal{T}_G} \frac{1}{|T|} \E (d_T| \mathcal{I}) (y),
  \end{equation}
 and
  $$\int_Y \lim_{n\rightarrow \infty} \frac{1}{|F_n|} d_{F_n} (y) d \nu (y)= \nu (\mathbf{D}).$$
  Moreover, if $\nu (\mathbf{D})> - \infty$ then, for $\nu$-a.e. $y\in Y$ and in the sense of $L^1$,
  $$\lim_{n\rightarrow \infty} \frac{1}{|F_n|} d_{F_n} (y)= \inf_{T\in \mathcal{T}_G} \frac{1}{|T|} \E (d_T| \mathcal{I}) (y).$$
  \end{thm}

Maybe the reader find that we could not apply our technical results directly to the case of $G= \bigoplus\limits_{n\in \N} K_n$ with each $K_n, n\in \N$ being a non-trivial finite group. Even so, we could we end this subsection with the following remark.

\fi

For the case of $G= \bigoplus\limits_{n\in \N} K_n$ where each $K_n, n\in \N$ is a non-trivial finite abelian group: here $K_n$ need not be a fixed group. At first sight, it appears that we cannot apply our technical results directly. However,  $G$ is still an abelian group, and if we consider $F_n= \{(g, e_{K_{n+ 1}}, e_{K_{n+ 2}}, \cdots): g\in \bigotimes\limits_{i= 1}^n K_i\}$ with $G_{F_n}= \{(e_{K_1}, \cdots, e_{K_n}, g): g\in \bigotimes\limits_{i> n} K_i\}$ for each $n\in \N$, then 1 is also its associated Tempelman condition constant. Though in general $G_{F_n}$ is not isomorphic to $G$ via a group isomorphism, a rewriting of the argument of the previous section leads to the same conclusion as for the case of $\bigoplus\limits_{\N} K$ with $K$ a non-trivial finite abelian group.

Moreover, if we drop the assumption that each $K_n, n\in \N$ is abelian, we can still obtain the result if the family is strongly sub-additive.

\subsection{The case of $G= \bigoplus\limits_{\N} \Z$}\

For the abelian group $\bigoplus\limits_{\N} \Z$,
 we consider the sequence $\{F_\mathbf{n}: \mathbf{n}\in \bigoplus\limits_{\N} \N\}$, where, for $\mathbf{n}\in \bigoplus\limits_{\N} \N$, say $\mathbf{n}= (n_1, \cdots, n_m)$ for some $m\in \N$, the subset $F_\mathbf{n}$ is given as
 $$\{(g_1, \cdots, g_m, 0, 0, \cdots): g_i\in \{0, 1, \cdots, n_i- 1\}, i= 1, \cdots, m\}.$$
 As in Remark \ref{1103091506}, for $\bigoplus\limits_{\N} \N$, $(n_1, \cdots, n_m)$ tends to $\infty$ just means that both $m$ and all $n_i$ increasingly tend to $\infty$.
 For each $\mathbf{n}\in \bigoplus\limits_{\N} \N$, say $\mathbf{n}= (n_1, \cdots, n_m)$ for some $m\in \N$, obviously $F_\mathbf{n}$ tiles $\bigoplus\limits_{\N} \Z$ self-similarly with
 $$G_{F_\mathbf{n}}= \bigoplus_{i= 1}^m n_i \Z\bigoplus \bigoplus_\N \Z,$$
 and so for $\mathbf{n}'\in \bigoplus\limits_{\N} \N$, say $\mathbf{n}'= (n_1', \cdots, n_{m'}')$ for some $m'\in \N$, if $m'\ge m$ then
 $$F_\mathbf{n} \pi_{F_\mathbf{n}} (F_{\mathbf{n}'})= F_{\mathbf{n}^*}, F_\mathbf{n} F_{\mathbf{n}'}= F_{\mathbf{n}^{**}}, F_\mathbf{n}\cup F_{\mathbf{n}'}= F_{\mathbf{n}^{***}}$$
 with
 $$\mathbf{n}^*= (n_1 n_1', \cdots, n_m n_m', n_{m+ 1}', \cdots, n_{m'}'),$$
 $$\mathbf{n}^{**}= (n_1+ n_1'- 1, \cdots, n_m+ n_m'- 1, n_{m+ 1}', \cdots, n_{m'}'),$$
  $$\mathbf{n}^{***}= (\max \{n_1, n_1'\}, \cdots, \max \{n_m, n_m'\}, n_{m+ 1}', \cdots, n_{m'}').$$
  We have similar formulas in the case of $m'< m$.

It is easy to see that even in general a F\o lner sequence from $\{F_\mathbf{n}: \mathbf{n}\in \bigoplus\limits_{\N} \N\}$ is not tempered,
 and so we cannot apply directly any of the results of the previous sections to $\{F_\mathbf{n}: \mathbf{n}\in \bigoplus\limits_{\N} \N\}$.

Before proceeding, let us first recall some well-known results.

The first one can be found in any standard book about ergodic theory.

\begin{thm} \label{1103102009}
 Let $\vartheta$ be an invertible measure-preserving transformation over a Lebesgue space $(Y, \mathcal{D}, \nu)$ and $f\in L^p (Y, \mathcal{D}, \nu)$, where $1\le p< \infty$. Then
 $$\lim_{n\rightarrow \infty} \frac{1}{n} \sum_{i= 0}^{n- 1} f (\vartheta^i y)= \E (f| \mathcal{I}) (y)$$
 for $\nu$-a.e. $y\in Y$ and in the sense of $L^p$.
 \end{thm}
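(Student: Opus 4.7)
The statement is the classical Birkhoff pointwise ergodic theorem, and the cleanest route available inside the present paper is to derive it as a direct specialisation of Theorem \ref{1006131615} (Lindenstrauss). The invertible measure-preserving transformation $\vartheta$ generates a $\mathbb{Z}$-action on $(Y, \mathcal{D}, \nu)$ by $n \cdot y = \vartheta^n y$, under which the invariant sub-$\sigma$-algebra $\mathcal{I}$ of the MDS coincides with the $\vartheta$-invariant sub-$\sigma$-algebra. Take the F\o lner sequence $F_n = \{0, 1, \dots, n-1\}$ of $\mathbb{Z}$; then $\bigcup_{k=1}^{n} F_k^{-1} F_n \subseteq \{-(n-1), \dots, n-1\}$, so the Tempelman condition (and in particular temperedness) holds with constant $M=2$. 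Theorem \ref{1006131615} applied to $f \in L^p(Y, \mathcal{D}, \nu)$ then yields
\[
\lim_{n\to\infty} \frac{1}{n} \sum_{i=0}^{n-1} f(\vartheta^i y) \;=\; \lim_{n\to\infty} \frac{1}{|F_n|} \sum_{g \in F_n} f(g \cdot y) \;=\; \E(f|\mathcal{I})(y)
\]
for $\nu$-a.e.\ $y \in Y$ and in the sense of $L^p$, which is exactly the desired statement.

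Alternatively, a self-contained proof along classical textbook lines is available. The key step would be the maximal ergodic inequality
\[
\nu\bigl(\bigl\{y : \sup_{n \ge 1} \tfrac{1}{n} \textstyle\sum_{i=0}^{n-1} f(\vartheta^i y) > \alpha\bigr\}\bigr) \;\le\; \|f\|_1/\alpha,
\]
obtained from Hopf's combinatorial lemma or a Wiener-type covering argument on orbits. One then verifies pointwise convergence on the linear span $V$ of the $\mathcal{I}$-measurable functions together with the coboundaries $h - h\circ\vartheta$ (for $h \in L^\infty$): on the first class the limit is $f$ itself, and on coboundaries the partial sums telescope to $(h - h\circ\vartheta^n)/n \to 0$. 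Since von Neumann's mean ergodic theorem shows $V$ is dense in $L^p$, a standard $\epsilon/3$ Banach-principle argument based on the maximal inequality propagates a.e.\ convergence from $V$ to all of $L^p$.

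For the $L^p$ mean statement with $1 < p < \infty$, combine the strong $L^p$ maximal inequality (Marcinkiewicz interpolation) with dominated convergence; for $p=1$, uniform integrability of the ergodic averages, via the weak $(1,1)$ bound applied to the tails $|f|\cdot\mathbf{1}_{\{|f|>M\}}$, yields $L^1$ convergence. The only genuine obstacle in the self-contained approach is the maximal inequality; the Lindenstrauss-based route bypasses this entirely at the cost of invoking a substantially stronger theorem.
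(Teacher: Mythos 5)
Your proposal is correct. Note, however, that the paper itself offers no proof of this statement: it is introduced with the remark that it ``can be found in any standard book about ergodic theory,'' so there is no argument in the text to compare against. Of your two routes, the first --- specialising Theorem \ref{1006131615} to $G=\Z$ with $F_n=\{0,1,\dots,n-1\}$ --- is the one most consonant with the paper's framework, and your verification that this sequence satisfies the Tempelman condition (hence temperedness) with $M=2$ is accurate, so the deduction is immediate; the only caveat is that it proves a classical result by appeal to a far stronger modern theorem, which is logically fine but historically backwards. Your second route is the standard self-contained argument (maximal ergodic inequality, convergence on invariant functions and on coboundaries $h-h\circ\vartheta$ with $h\in L^\infty$ by telescoping, density of their span via the mean ergodic theorem, and a Banach-principle transfer), and the treatment of $L^p$ convergence for $1<p<\infty$ versus $p=1$ via uniform integrability is the correct dichotomy. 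Both routes are sound; the second is a sketch whose one substantive ingredient (the weak $(1,1)$ maximal inequality) you correctly identify as the crux, and it is in fact the one-dimensional analogue of the paper's own Lemma \ref{1103052244} and of Proposition \ref{01103102009}.
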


The second one is also standard: it is a variation of \cite[Chapter
 1, Theorem 5.2]{Krengel} (in fact, it is another version of the
 maximal ergodic inequality).

\begin{prop} \label{01103102009}
 Let $\vartheta$ be an invertible measure-preserving transformation over a Lebesgue space $(Y, \mathcal{D}, \nu)$ and $\mathbf{D}= \{d_F: F\in \mathcal{F}_\Z\}\subseteq L^1 (Y, \mathcal{D}, \nu)$ a non-negative sup-additive $\vartheta$-invariant family. Then, for each $\alpha> 0$,
 $$\nu (\{y\in Y: \sup_{n\in \N} \frac{1}{n} d_{\{0, 1, \cdots, n- 1\}} (y)> \alpha\})\le \frac{1}{\alpha} \nu (\mathbf{D}).$$
 \end{prop}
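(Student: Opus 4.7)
The plan is to mimic the classical Wiener/rising-sun proof of the maximal ergodic theorem, taking advantage of the linear order on $\Z$ to get the sharp constant $1$ (instead of the constant $2$ that would come from applying Lemma \ref{1103052244} with the Tempelman constant of the intervals). For each $N\in\N$ and $\alpha>0$, set
$$Y_{\alpha,N}=\left\{y\in Y:\max_{1\le k\le N}\frac{d_{F_k}(y)}{k}>\alpha\right\},\qquad F_k=\{0,1,\ldots,k-1\}.$$
By monotone convergence it suffices to prove $\nu(Y_{\alpha,N})\le\frac{1}{\alpha}\nu(\mathbf{D})$ for every fixed $N$, and then let $N\to\infty$.

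Fix such an $N$ and an $n>N$. For $y\in Y$ let $S_y=\{g\in F_n:\vartheta^g y\in Y_{\alpha,N}\}$; for each $g\in S_y$ pick the minimal $k(g)\in\{1,\ldots,N\}$ with $d_{F_{k(g)}}(\vartheta^g y)>\alpha k(g)$, which by the $\vartheta$-invariance of $\mathbf{D}$ means $d_{[g,g+k(g)-1]}(y)>\alpha k(g)$. Now perform a greedy Vitali-type selection along $\Z$: let $g_1=\min S_y$, $I_1=[g_1,g_1+k(g_1)-1]$, and inductively $g_{j+1}=\min\{g\in S_y:g>g_j+k(g_j)-1\}$, $I_{j+1}=[g_{j+1},g_{j+1}+k(g_{j+1})-1]$. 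The intervals $I_j$ are pairwise disjoint by construction, each $I_j\subseteq F_{n+N}=\{0,1,\ldots,n+N-1\}$, and a one-line maximality check shows $S_y\subseteq\bigcup_j I_j$ (if some $g\in S_y$ fell strictly above $g_j+k(g_j)-1$, it would force an additional selected point before $g_{j+1}$).

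Sup-additivity together with non-negativity yields monotonicity, so for $y$ with $S_y\ne\emptyset$,
$$d_{F_{n+N}}(y)\ge d_{\bigcup_j I_j}(y)\ge \sum_j d_{I_j}(y)>\alpha\sum_j k(g_j)=\alpha\Bigl|\bigcup_j I_j\Bigr|\ge\alpha|S_y|,$$
and the bound $d_{F_{n+N}}(y)\ge \alpha|S_y|$ holds trivially when $S_y=\emptyset$. Integrate in $y$, using Fubini and the $\vartheta$-invariance of $\nu$:
$$\int_Y d_{F_{n+N}}(y)\,d\nu(y)\ge \alpha\int_Y\sum_{g\in F_n}\mathbf{1}_{Y_{\alpha,N}}(\vartheta^g y)\,d\nu(y)=\alpha\, n\,\nu(Y_{\alpha,N}).$$

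Dividing by $n$ and letting $n\to\infty$, Proposition \ref{1103051237} gives $\frac{1}{n+N}\int d_{F_{n+N}}\,d\nu\to\nu(\mathbf{D})$, while $(n+N)/n\to 1$, so $\nu(Y_{\alpha,N})\le\frac{1}{\alpha}\nu(\mathbf{D})$. Letting $N\to\infty$ finishes the proof. The only real obstacle is the Vitali selection step and the verification that its disjoint intervals still cover $S_y$; once that is in place the integration is routine and the constant $1$ falls out of the identity $\sum_j k(g_j)=|\bigcup_j I_j|$ that is special to the one-dimensional ordered setting.
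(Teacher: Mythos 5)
Your argument is correct, and it is essentially the proof the paper has in mind: the paper gives no proof of Proposition \ref{01103102009}, deferring to Krengel's maximal ergodic inequality for superadditive processes, whose standard proof is exactly this greedy (rising-sun/Vitali) selection of disjoint intervals along $\Z$, yielding the sharp constant $1$. Your opening observation is also the right one to make --- applying the general Lemma \ref{1103052244} to the intervals $\{0,\dots,n-1\}$ would only give the Tempelman constant $M=2$, so the linear order on $\Z$ must be exploited to obtain the constant $1$ claimed in the statement.
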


Now using Theorem \ref{1103102009} and Proposition \ref{01103102009}, and applying the results of the previous sections (especially, Remark \ref{1103101901}) we obtain a result similar to Theorem \ref{1103102128} (as $\bigoplus\limits_{\N} \Z$ is an abelian group).

\section*{Acknowledgements}

 The authors thank the referee for many important comments that
 have resulted in substantial improvements to this paper, in particular for informing
 us the paper of Pogorzelski \cite{Pogor}.

This work was largely carried out in the School of Mathematics
 and Statistics, University of New South Wales (Australia). We gratefully
 acknowledge the hospitality of UNSW. We also acknowledge the support of the
 Australian Research Council.

The third author was also supported by FANEDD (No. 201018), NSFC (No.
 10801035 and No. 11271078) and a grant from Chinese Ministry of Education (No.
 200802461004).

\vskip 16pt

\bibliographystyle{amsplain} % global bibliography

%\bibliography{SAETbibnew}

\end{document}